\xpatchcmd{\MaketitleBox}{\hrule}{}{}{}
\xpatchcmd{\MaketitleBox}{\hrule}{}{}{}
\g@addto@macro\normalsize{%
	\setlength\abovedisplayskip{4pt}
	\setlength\belowdisplayskip{4pt}
	\setlength\abovedisplayshortskip{4pt}
	\setlength\belowdisplayshortskip{4pt}
}
\numberwithin{equation}{section}
\crefname{section}{Section}{Sections}
\crefname{subsection}{Subsection}{Subsections}
\crefname{condition}{Condition}{Conditions}
\crefname{hypothesis}{Hypothesis}{Conditions}
\crefname{assumption}{Assumption}{Assumptions}
\crefname{lemmaa}{Lemma}{Lemmas}
\crefname{lemma}{Lemma}{Lemmas}
\crefname{definition}{Definition}{Definitions}
\crefname{figure}{figure}{figures}
\crefname{question}{Question}{Questions}
\numberwithin{equation}{section}
\newtheorem{theorem} {Theorem}[section]
\newtheorem{proposition} {Proposition}[section]
\newtheorem{lemma}{Lemma}[section]
\newtheorem{counter-example}{counter-example}[section]
\newtheorem{remark} {Remark}[section]
\newtheorem{definition} {Definition}[section]
\newtheorem{question} {Question}[section]
\providecommand{\customgenericname}{}
\newcommand{\newcustomtheorem}[2]{%
	\newenvironment{#1}[1]
	{%
		\renewcommand\customgenericname{#2}%
		\renewcommand\theinnercustomgeneric{##1}%
		\innercustomgeneric
	}
	{\endinnercustomgeneric}
}
\def\CC{{\rm \kern.24em \vrule width.02em height1.4ex depth-.05ex \kern-.26emC}}
\def\TagOnRight
\def\AA{{it I} \hskip-3pt{\tt A}}
\def\QQ{\rlap {\raise 0.4ex \hbox{$\scriptscriptstyle |$}} {\hskip -0.1em Q}}
\newcommand{\vo}{\vec{o}\@ifnextchar{^}{\,}{}}
\def\YYint#1#2#3{{\setbox0=\hbox{$#1{#2#3}{\iint}$}
		\vcenter{\hbox{$#2#3$}}\kern-.50\wd0}}
\def\XXint#1#2#3{{\setbox0=\hbox{$#1{#2#3}{\int}$}
		\vcenter{\hbox{$#2#3$}}\kern-.50\wd0}}
\def\namedlabel#1#2{\begingroup
	\def\@currentlabel{#2}%
	\label{#1}\endgroup
}
\newcommand{\rmh}[1]{\mathpalette{\raisem@th{#1}}}
\newcommand{\raisem@th}[3]{\hspace*{-1pt}\raisebox{#1}{$#2#3$}}
\newcommand{\descitem}[2]{\item[#1] \label{#2}}
\newcommand{\descref}[2]{\hyperref[#1]{\textnormal{\textcolor{black}{}\textcolor{blue}{\bf #2}\textcolor{black}{}}}}
\newcommand{\dref}[2]{\hyperref[#1]{\textcolor{black}{(}\textcolor{blue}{\bf #2}\textcolor{black}{)}}}
\newcommand{\be} {\begin{eqnarray}}
	\newcommand{\ee} {\end{eqnarray}}
\newcommand{\Bea} {\begin{eqnarray*}}
	\newcommand{\Eea} {\end{eqnarray*}}
\newcommand{\rr}{\rightarrow}
\newcommand{\B} {\beta}
\newcommand{\g} {\gamma}
\newcommand{\p}  {\prime}
\newcommand{\e}  {\epsilon}
\newcommand{\la} {\lambda}
\newcommand{\f}{\infty}
\newcommand{\R}{\mathbb{R}}
\newcommand{\al}{\alpha}
\newcommand{\ga}{\gamma}
\DeclareMathOperator{\sign}{sign}
\newcommand{\norm}[1]{\left|\hspace{-0.2mm}\left| #1 \right|\hspace{-0.2mm}\right|}
\newcommand{\abs}[1]{\left| #1\right|}
\newcommand{\RN}[1]{%
	\textup{\uppercase\expandafter{\romannumeral#1}}%
}
\newcounter{whitney}
\newcounter{ineqcounter}
\def\ps@pprintTitle{%
	\let\@oddhead\@empty
	\let\@evenhead\@empty
	\def\@oddfoot{}%
	\let\@evenfoot\@oddfoot}
\def\@mkboth#1#2{}
\newlength\appendixwidth
\preto\appendix{\addtocontents{toc}{\protect\patchl@section}}
\newcommand{\patchl@section}{%
	\settowidth{\appendixwidth}{\textbf{Appendix }}%
	\addtolength{\appendixwidth}{1.5em}%
	\patchcmd{\l@section}{1.5em}{\appendixwidth}{}{\ddt}%
}
\begin{document}
	
	\begin{frontmatter}
		
		\title{{Fractional regularity for conservation laws with discontinuous flux}
		}

		\author[myaddress]{Shyam Sundar Ghoshal}
		\ead{ghoshal@tifrbng.res.in}

		\author[myaddress1]{St\'ephane Junca}
		\ead{stephane.junca@univ-cotedazur.fr}

		\author[myaddress]{Akash Parmar}
		\ead{akash@tifrbng.res.in}
		\address[myaddress]{Centre for Applicable Mathematics, Tata Institute of Fundamental Research, Post Bag No 6503, Sharadanagar, Bangalore - 560065, India.}
		
		\address[myaddress1]{Universit\'e C\^ote d'Azur,  LJAD,  Inria \& CNRS,
			Parc Valrose,
			06108 Nice, 
			France.}
		
		
		\begin{abstract}
			This article deals with the regularity  of the  entropy solutions  of  scalar conservation laws with discontinuous flux. It is well-known [Adimurthi et al.,  Comm. Pure Appl. Math. 2011] that the entropy solution for such equation  does not admit  $BV$ regularity in general, even when the initial data belongs {to} $BV$. Due to this phenomenon fractional $BV^s$  spaces  wider  than  $BV$  are required, where the exponent  $0 < s \leq 1$   and   $BV=BV^1$.  
			It is a long standing open question to find the optimal regularizing effect for the discontinuous flux with  $L^{\f}$  initial data. 
			The optimal regularizing effect in $BV^s$ 
			is  {proven  on an important case using control theory}. 
			The fractional exponent $s $  is at most $1/2$ even when the fluxes are uniformly convex.

		\end{abstract}

		\begin{keyword}
			Conservation laws\sep Interface \sep Discontinuous flux\sep Cauchy problem\sep Regularity \sep $BV$ functions  \sep Fractional $BV$ spaces.
			\MSC[2020] 35B65\sep35L65  \sep 35F25 \sep 35L67  \sep 26A45 \sep35B44.
		\end{keyword}
	\end{frontmatter}
	%
	%
	\tableofcontents
	\section{Introduction}
	This article deals with the regularity aspects of the solution for the following scalar conservation law with discontinuous flux:  
	\begin{eqnarray}\label{1.1}
		\left\{\begin{array}{rlll}
			u_{t}+f(u)_{x}&=0, &\mbox{ if }& x>0,  t>0,\\
			u_{t}+g(u)_{x}&=0,&\mbox{ if }& x<0,  t>0,\\
			u(x,0)&=u_{0}(x),  &\mbox{ if }& x\in\R,
		\end{array}\right.
	\end{eqnarray}
	where $u:\R\times[0,\f)\rr\R$ is unknown, $u_0(\cdot)\in L^{\f}(\R)$ is the initial data and the fluxes $f$, $g$ are  $C^{1}(\R)$ and  strictly convex (that means $f^\prime,g^\prime$ are increasing functions).
	
	The conservation law \eqref{1.1} arises in several frameworks of  real-life phenomena, physical situations and applied subjects. For example, the equation \eqref{1.1} occurs naturally in the two-phase flow of a heterogeneous porous medium in the petroleum reservoir \cite{petro}.  The equation \eqref{1.1} is also useful to understand the ideal clarifier thickener \cite{burger2}, traffic flow model with varying road surface conditions \cite{traffic} and ion etching accustomed for semiconductor industry \cite{ion}. The above examples are just a little glance at the broad applicability of the equation \eqref{1.1} in the fields of applied sciences. For more details, one can see \cite{burger2, burger4, diehl, diehl4}.
	
	The equation \eqref{1.1} does not have a global classical solution even for smooth initial data, so one needs to consider the following notion of a weak solution: 
	\begin{definition}[{\bf Weak solution}]\label{weak}  
		A function $u\in C(0,T;L^{1}_{loc}(\R))$ is said to be a weak solution of the problem \eqref{1.1}  if
		\begin{equation*}
			\int\limits_{0}^{\f}\int\limits_{\R} u\frac{\partial \phi}{\partial t}+F(x,u)\frac{\partial \phi}{\partial x}\mathrm{d}x \ \mathrm{d}t +\int\limits_{\R}^{} u_{0}(x)\phi(x,0)\mathrm{d}x =0,
		\end{equation*}
		for all $\phi\in C^\f_c(\R\times\R^+)$, where  the flux  $F(x,u)$ is given as $F(x, u)=H(x)f(u)+(1-H(x))g(u)$, and $H(x)$ is   {the} Heaviside function.
		
	\end{definition}
    From the above defined weak formulation it can be derived that if interface traces $u^\pm(t)=\lim\limits_{x\rr0\pm}u(x,t)$ exist then at $x=0$, $u$ satisfies Rankine-Hugoniot condition, namely, for almost all $t$
    \begin{equation}\label{RH}
    	f(u^{+}(t))=g(u^{-}(t)).
    \end{equation} 
   For equation \eqref{1.1}, the left and right traces $u^-,u^+$ play important roles in well-posedness theory and also in determining the regularity of solutions. In \cite{Kyoto}, authors proved the existence of the interface traces via Hamilton-Jacobi type equation.  
    %
    %
    %
    %
    %
%
%
%
%
%
%


	
	Because of the non-uniqueness of  weak solutions, one needs some extra condition called the ``entropy condition" to get the unique solution even for the case $f=g$ in \eqref{1.1}. For $f=g$, Kru\v{z}kov \cite{Kruzkov}  gave a generalized entropy condition and proved the uniqueness. But due to the discontinuity of flux at the interface, the Kru\v{z}kov entropy 
	is not good enough to prove the uniqueness of \eqref{1.1}.  Hence another condition is needed near the interface, called the ``interface entropy condition".

	Throughout this article, we use the following   notion of the entropy solution. 

		\begin{definition}[{\bf Entropy solution}, \cite{Kyoto}]
		A weak solution $u\in L^\f(\R\times[0,T])$  of the problem \eqref{1.1} is said to be an entropy solution if the following holds. 
		\begin{enumerate}
			\item $u$ satisfies Kruzkov entropy conditions on each side of the interface $x=0$, that is, in $\R\setminus\{0\}$.
			\item The interface traces $u^\pm(t)=\lim\limits_{x\rr0\pm}u(x,t)$ exist for almost all $t>0$ and they satisfy the  following ``interface entropy condition" for almost all $t>0$, 
			\begin{equation} \label{iec}
				|\{t: f'(u^{+}(t))>0> g'(u^{-}(t))\}|=0.
			\end{equation}
		\end{enumerate}
	\end{definition}
	Uniqueness has been proved in  \cite{Kyoto}  when interface traces exist for a weak solution and it satisfies the entropy condition \eqref{iec}. In the same article, the authors obtained  the useful Lax-Oleinik type explicit formulas for  (\ref{1.1}). The notion of `A-B entropy solution' is introduced in \cite{ASG} and it coincides with \eqref{iec} when $A=\theta_g,B=\theta_f$. The number $\theta_f$ is defined by   $ f(\theta_f)=\min f $ when $f$ admits a minimum and $g(\theta_g)=\min g$. Lax-Oleinik type formula is also available \cite{Explicit}  for the `A-B-entropy solutions'. It has been observed \cite{AS} that for the case $A<\theta_g$ or $B>\theta_f$, `A-B-entropy solutions' belong to BV space for BV initial data and for $A=\theta_g ,B=\theta_f $ total variation of entropy solution can blow up at finite time $t_0>0$ even for some BV initial data (see section \ref{sec:results-discont} for more details). Therefore, we work with the choice $A=\theta_g ,B=\theta_f $. 
 In this article, we rely on the interface entropy condition \eqref{iec}, 
 and we use the analysis of characteristics developed as  in \cite{Kyoto}.

	The well{-}posed theory from the numerical and theoretical aspects has been extensively studied. We refer to \cite{ASG,boris2,BGG,KT,P09}  and the references therein. 
	The existence of a solution of \eqref{1.1}  has been proved by several numerical schemes  \cite{AJG, boris1, SAT, tower}. Due to the absence of total variation bound of solutions even for  BV data, the singular mapping technique becomes useful to show the convergence of numerical schemes (see \cite{AJG,tower}). 
	Very recently, in \cite{SAT} authors generalize the Godunov type scheme in the case when discontinuities of flux may have limit point even  when the set of discontinuities is dense. 

Due to the lack of the BV regularity of the entropy  solution of \eqref{1.1}, one needs to study the regularity aspects of the solution in some bigger space than $BV$. More precisely, in this paper, we quantify the sharp regularity of entropy solution of \eqref{1.1} in  suitable  fractional spaces.

		\subsection*{Structure of the paper}
	
	This paper is organized as follows: in sections \ref{sec:1.1} and \ref{sec:results-discont}, we have discussed regularity results for scalar conservation laws and for \eqref{1.1} respectively. Then, it leads to section \ref{sec:1.3} where we precisely state the regularity problems corresponding to the equation \eqref{1.1}.   
	In section \ref{main} we describe our main results with   some remarks. To make this article self-contained, in section \ref{def} some definitions and preliminary results have been recalled from \cite{Kyoto, junca1}.  The detailed proofs of  main results are written in section \ref{mainproof}. Proofs of these results utilize the Hopf-Lax type formula and some results from \cite{Kyoto} and techniques from \cite{AS, S}. Proofs here are a little more involved technically.  In the last section, the construction of a counter-example  shows that the main results of the present article cannot be improved.
Two appendixes contain basic useful lemmas and explanations regarding our adaptation of the result from control theory \cite{AG}.
	
	\subsection{Optimal regularity results in  $BV^s$ spaces  for a  smooth  flux: $f=g$ 
	}\label{sec:1.1}

	In this subsection, we discuss the entropy solution of \eqref{1.1} for $f=g$. Even for Lipschitz flux, the theory is well-posed \cite{godunov, Kruzkov, lax1, ol} in the  $L^\infty$ setting and  many methodologies  are available  to understand  the  regularity of entropy solutions \cite{finer,junca1,CJLO,CJJ,SAJJ,SA,GJC,lax1,P05,P07,ol}. 

	Natural  function space  for scalar conservation law is  $BV$  since  the fundamental work  of   A. I. Volpert \cite{Volpert} in 1967. This space allows to get compactness and it makes convenient  to describe the structure of  a shock wave  with traces on each side of the singularity \cite{AFP}.  Information on trace helps to study finer qualitative properties of solutions. 
	The occurrence of  the $BV$  regularity for entropy solution  appeared for the first time  in \cite{lax1,ol} independently by P. D. Lax and O. Oleinik. The entropy solution becomes instantaneously $BV$ even when the data is in $L^\infty$ and the flux is uniformly convex, i.e., $\inf f'' > 0$.  This is the well known smoothing effect  as a consequence of the one-sided Lipschitz-Oleinik inequality  \cite{ol}.    
	
	Unfortunately,  the `$BV$ space is not enough' \cite{Cheng83} when the flux is not uniformly convex. There are many examples of  entropy solutions that are not in $BV$ \cite{finer,CJ1,SAJJ}. Though non-vanishing property of second derivative of the flux is proved \cite{SA} to be necessary and sufficient condition for BV regularizing,   but a smoothing effect  still occurs in fractional  Sobolev spaces \cite{JaX,LPT} for a  nonlinear flux. To keep the advantages of space $BV$: regularity and some traces, the fractional $BV$ spaces were introduced for conservation laws in \cite{junca1}.  The Lax-Oleinik smoothing effect was generalized  in $BV^s$  for  a  flux with power-law nonlinearity like $ |u|^{p+1}$ and $p=1/s \geq 1$, 
	for $C^1$ or  strictly convex flux in  \cite{junca1,CJLO,GJC}.  
	
	Fractional BV spaces, denoted by $BV^{s}$, $0<s\le 1$, were  first defined for all $s\in (0,1)$ in \cite{love, MO, MO1}.
	Let $I$ be a non-empty interval of $\R$ and $s\in (0,1]$. The space of fractional bounded variation functions, denoted as $BV^{s}(I)$  is a generalization of  space of functions with a bounded variation on $I$, denoted as $BV(I)$.
	In the sequel, we denote $S(I)$ the set of the subdivisions of $I$, that is the set of finite subsets
	$\sigma =({x_{0},x_{1},...,x_{n}})$ in $I$ with $(x_{0}{<} x_{1} {<} x_{2} {<} ...  {<} x_{n})$.
	\begin{definition}[$BV^{s}$ \cite{love, MO, MO1}]
		Let  $\sigma=(x_{0},x_{1},...,x_{n})$ be in $S(I)$ and let $u$ be real function on $I$. The s-total variation of $u$ with respect to $\sigma$ is
		
		\begin{equation*}
			TV^{s}u({\sigma})=\sum_{i=1}^{n}{|u(x_{i})-u(x_{i-1})|}^{1/s},
		\end{equation*}
		then define,
		\begin{equation*}
			TV^{s}u(I)= \sup_{\sigma\in S(I)} TV^{s}u({\sigma}).
		\end{equation*}
		The set $BV^{s}(I)$ is the set of functions $u:I\rr\R $ such that $TV^{s}u(I)<\f$.
	\end{definition}
	


	\subsection{Previous regularity results for   discontinuous flux}\label{sec:results-discont} 
	To study the convergence and existence of traces of the solution, BV regularity plays a very important role. Without total variation bound, the convergence of numerical scheme is difficult. It is to be noted that one can not expect  the total variation diminishing of the solution to \eqref{1.1} since non-constant solution can arise from constant initial data. 
	 Despite extensive study of the equation \eqref{1.1} for several decades, optimal regularity results were missing for the  solution of \eqref{1.1}. There are few results known about the regularity aspects of the solution to  \eqref{1.1} and we describe them in the following paragraph.

	Though away from interface solution has been proved \cite{burger1} to be BV in space, the regularity of solution near interface was unknown for long time. First breakthrough result \cite{AS} comes in 2009. By constructing explicit example when $\min f\neq \min g$, authors \cite{AS} have shown that total variation of entropy solution to \eqref{1.1} blows up at time $t_0>0$ for BV initial data. To build the example, they have utilized the failure of Lipschitz continuity of $f^{-1}g$ near the critical point of $f$. Here $g_{-}^{-1}$, $f_{+}^{-1}$ are the inverse of $g, f$ in appropriate domains, more precisely, they are defined as 
	\begin{equation}\label{inverses}
		g_{-}^{-1}: \; \; ]  (g')^{-1} ( - \infty),   (g')^{-1} (0)] \rightarrow \R \qquad  \&  \qquad   f_{+}^{-1}: [(f')^{-1} (0), (f')^{-1} (+ \infty)[   \rightarrow \R.
	\end{equation}
   The key  functions $f^{-1}_{+}g(\cdot)$ and $g^{-1}_{-}f(\cdot)$ transmit  information via the interface from left-to-right and right-to-left respectively.
	
	On the other hand in \cite{S,S2}  the author proved several regularity results. 
	More surprisingly, the author showed that one can prove that solution of \eqref{1.1} belongs to $BV$ if fluxes have the same minimum value, i.e., $ \min f=f(\theta_{f})= \min g  = g(\theta_{g})$. The author also proved that if  $f(\theta_{f})\not=g(\theta_{g})$ and initial data is compactly supported then there exists a time $T$ such that for all $t>T$ solution of \eqref{1.1} admits the BV regularity. The assumption of compact support can not be relaxed as it has been shown by example that there exists a sequence of time, $T_{n}$, for which the total variation of solution to \eqref{1.1} blows up. 

	Earlier referred publications have uniform convexity assumption on the fluxes, in \cite{S2} it has been proved that even for non-uniform convex flux (but with a special structure when the flux losses its uniform convexity) any $L^{\f}$ initial data gives the solution which is $BV_{loc}$ near the interface when the connection $(A,B)$ as in \cite{Explicit}  are far from the critical point.
	\\
	
	\par This discussion leads to conclude that working in the BV space setup is not enough for scalar conservation law with discontinuous flux \eqref{1.1}. Hence, working in larger space appears appropriate and we work in the space of functions of fractional bounded variation, denoted as $BV^{s}$, which is  more generalised space than the BV space.
	
	In the following subsection we discuss the questions which are answered in the present paper.  

	\subsection{Questions on the $BV^s$ regularity for  discontinuous flux}\label{sec:1.3}
	As we discussed so far, {\it the entropy solution of ({\ref{1.1}}) lacks} the following properties: 
	\begin{itemize}
		\item [1.] If $u_0\in BV(\R)$ then $u(\cdot,t)\in BV(\R)$, for any $t>0$.
		\item [2.] If $f,g$ are uniformly convex fluxes,  { $\min f \neq \min g$} and $u_0\in L^\f(\R)$ then for any $t>0$, $u(\cdot,t)\in BV_{\mbox{loc}}$.
	\end{itemize}
	By motivating the subject on the basis of the above facts,  we settle the following issues regarding regularity of the solution of \eqref{1.1}.
	\begin{question}\label{q1}
		Can we expect that   for  a  well chosen $ 0 < s \leq  1$,  if the given initial data belongs to  $BV^{s}$  then  the solution of \eqref{1.1} stays in   $BV^{s}$?
	\end{question}
	\begin{question}\label{q1bis}
		Can we expect that   for  any $ 0 < s  \leq 1$  there exists $0 < s_1$ such that  if the given initial data belongs to  $BV^{s}$ then the solution of \eqref{1.1} belongs  to $BV^{s_1}$? 
	\end{question}
	In particular,  when $u_0$ is in $BV=BV^1$, $s=1$, in which $BV^{s_1}$, $0< s_1 \leq 1$, is the solution?
	\begin{question}\label{q-reg}
		What is the Lax-Oleinik type regularizing effect, for uniformly convex fluxes $f$ and $g$? In other words, does entropy solution of \eqref{1.1} belong to $BV^{s}$ for some $s\in(0,1)$ and for any given $L^\f$ initial data? 
	\end{question}
	\begin{question}\label{q3}
		Can we choose $0 <s < 1$  sharply  and an initial data $u_{0} \in BV^s$  space for which the  generalized total variation blows up for the corresponding solution of \eqref{1.1}?
	\end{question}
	We are able to  answer all questions \ref{q1}-\ref{q3} affirmatively under certain assumptions on the fluxes $f,g$. We then show by counter-examples that the assumptions of our main results are optimal. Moreover, we provide explicit estimates of $s$-total variation of the solution with respect to time variable $t$ with  some sufficient conditions on initial data.
	

	\section{Main Results}\label{main}
	Throughout the paper, $f$ and $g$ are $C^1$  strictly convex functions admitting a critical point.
	Let $\theta_{f}$ and $\theta_{g}$ be the unique critical points of $f$ and $g$ respectively, i.e., $f'(\theta_{f})=0$ and $g'(\theta_{g})=0$ and $g_{-}^{-1}$, $f_{+}^{-1}$ denote the inverse of $g, f$ for domain where $g'(u)\le0$ and $f'(u)\ge 0$ respectively.  Notice that the existence of a minimum for $f$ and $g$ are always assumed in this paper as it allows the critical behaviour of admissible solution. 
	If $f$ and $g$  have no minimum but both are strictly increasing or decreasing, the situation is simpler \cite{AS}.  Thus, throughout the paper, it is assumed that, 
	\begin{equation}\label{hyp:diff-min}
		f(\theta_f)=\min f \neq \min g = g(\theta_g).
	\end{equation}
	 In the best case when $f$ and $g$  are uniformly convex and \eqref{hyp:diff-min} is satisfied, we obtain a smoothing in $BV^{1/2}$ instead of $BV$.
	
	In the non uniformly convex case the situation is worse.  The smoothing depends on the nonlinear flatness of the fluxes. 
	Let us introduce the following {\it non-degeneracy flux condition}  which is a power-law degeneracy condition \cite{junca1}, 
	there exist  two numbers $ p\geq 1$, $q \geq 1$, such that,  for  any   compact set  $K$, there exist  positive numbers $C_1, C_2$ such that for all $u\not=v$, $u,v\in K$, 
	\begin{eqnarray}\label{fluxc}
		\frac{|f'(u)-f'(v)|}{|u-v|^{p}}>C_{1} >0  &\mbox{ and }&\frac{|g'(u)-g'(v)|}{|u-v|^{q}}>C_{2}>0 .
	\end{eqnarray} 
		For $p=1$ this is the classical uniformly convex condition  for $f$ and for $p>1$ it corresponds to a less nonlinear convex flux like $f(u)=|u|^{p+1}$.
	
		An interesting subcase is when the loss of uniform convexity  of the fluxes occurs only at the minimum.  That is, if $f$ belongs to $C^2$ that convex power laws, $f(u)=|u|^{p+1}$, $p>1$ are the typical example. The same assumption can be made for the other flux $g$. 
	\begin{equation}\label{f''}
		f^{\p\p},g^{\p\p}  \mbox{ vanish only at $\theta_f$ and $\theta_g$ respectively.}
	\end{equation}
	The assumption \eqref{f''} combined with the previous one \eqref{fluxc} is also called the {\it restricted non-degeneracy condition} {and the fluxes, restricted fluxes}.
	In the subcase \eqref{f''} both satisfied by $f$ and $g$, stronger results are obtained and first presented in following Theorem \ref{fnoteqg} for an $L^\infty$  initial data and   Theorem \ref{th:f-not-g} for a $BV^s$ initial data.
	Two quantities are fundamental to express the fractional regularity of the solutions, $\gamma$ and $\nu$,
		\begin{align} \label{eq:gamma}
				\gamma=\left\{\begin{array}{l}
					\frac{1}{q+1}  \\
					\frac{1}{p+1} 
				\end{array}\right.
			& 	&
					\nu=\left\{\begin{array}{l}
					\frac{1}{p} \\
					\frac{1}{q} 
				\end{array}\right.
			&	&
					\begin{array}{lll}
					\mbox{ if }& \min f < \min g, \\
					\\
					\mbox{ if }& \min f > \min g.
				\end{array}
			\end{align}
			The constant $\gamma \leq 1/2$ can be understood as a loss of regularity due to the interface and $\nu\le1$ as the smoothing effect outside the interface. More precisely, $\gamma$ comes from the singular mapping technique as explained in  following remark. 
	\begin{remark}
		Let $f$ and $g$ be the fluxes satisfying the non-degeneracy condition \eqref{fluxc} and $f(\theta_{f})\not=g(\theta_{g})$. Then one of $f^{-1}_{+}g(\cdot)$ and $g^{-1}_{-}f(\cdot)$  is Lipschitz continuous and the other one is H\"older continuous with exponent $\gamma$ depending on $p,q$ from the non-degeneracy condition \eqref{fluxc}   and  given   by \eqref{eq:gamma}. The proof of the above fact is done in Lemma \ref{lipschitz}.
	\end{remark}
	
	\begin{theorem}[{\bf Smoothing effect for restricted nonlinear fluxes and $L^\infty$ initial data}]\label{fnoteqg}
		Let $f$ and $g$ be two $C^2$ fluxes satisfying the  restricted non-degeneracy condition
		$f(\theta_{f})\not=g(\theta_{g})$  \eqref{hyp:diff-min}, \eqref{fluxc} and \eqref{f''}.
		Let $u(\cdot, t)$ be  the entropy solution of \eqref{1.1} corresponding to an initial data $u_{0}\in L^{\f}(\R)$. Then, $u(\cdot,t)\in BV^{s}(-M,M)$ for each $t>0,\,M>0$, 
		where $s$ is determined as follows
		\begin{equation}\label{def:s}
			s=\min (\gamma,\nu)
		\end{equation}
	and the following estimate holds with a positive constant $C_{f,g,\norm{u_0}_\f}$ depending only on the fluxes and the range of the initial data, 
			\begin{equation}
				TV^{s}(u(\cdot,t),[-M,M])\leq C_{f,g,\norm{u_0}_\f} +3(2||u_{0}||_{\f})^{1/s}+\frac{C_{f,g,\norm{u_0}_\f}M}{t}.
			\end{equation}
	\end{theorem}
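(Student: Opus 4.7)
My plan is to decompose the $s$-total variation of $u(\cdot,t)$ on $[-M,M]$ into three contributions: the bulk variation on $[-M,0]$, the bulk variation on $[0,M]$, and the jump across the interface. On each half line, $u$ solves a single-flux conservation law (with $g$ on the left, with $f$ on the right) whose boundary data at $x=0$ is the interface trace. Under the restricted non-degeneracy condition \eqref{fluxc}, \eqref{f''}, the Lax--Oleinik type one-sided bound for strictly convex power-law fluxes from \cite{junca1, CJLO, GJC} applies: for $f$ with exponent $p$, one has $u(\cdot,t)\in BV^{1/p}_{\loc}$ on $[0,M]$ with seminorm bounded by $C_{f,\|u_0\|_\infty}+C M/t$, and analogously for $g$ with exponent $1/q$ on $[-M,0]$. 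Since $s\le \nu=\min(1/p,1/q)$ by definition \eqref{def:s}, this produces the $\tfrac{C_{f,g,\|u_0\|_\infty}M}{t}$ term and a finite $BV^s$ bound on each side away from the interface.

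Next I would handle the behaviour at the interface. The bulk Oleinik-type estimate on each side does not by itself control what happens as $x\to 0^\pm$, since the constants can blow up there. To bypass this, I would use the Lax--Oleinik type explicit formula for \eqref{1.1} from \cite{Kyoto} to classify the characteristics reaching a point $(x,t)$: either they emanate from the initial data $u_0$ on the same side of the interface, or they originate from the interface trace after a crossing. On the side where the characteristics have just left the interface, the trace value is determined through the Rankine--Hugoniot relation \eqref{RH} by the trace on the opposite side. By Lemma~\ref{lipschitz}, exactly one of the maps $f_+^{-1}\circ g$ and $g_-^{-1}\circ f$ is Lipschitz and the other is $\gamma$-H\"older, with $\gamma$ given by \eqref{eq:gamma}. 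Raising a $\gamma$-H\"older increment to the $1/s$ power with $s\leq \gamma$ produces an absolutely summable quantity controlled by the variation of the ``easy side'' trace, which is in turn controlled by the initial data via the explicit formula and the one-sided Oleinik estimate.

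To conclude, I would take any subdivision $\sigma=(x_0<\cdots<x_n)$ of $[-M,M]$, insert the interface point $0$ into $\sigma$, and separate the sum $\sum|u(x_{i+1},t)-u(x_i,t)|^{1/s}$ into: (i)~the indices lying entirely in $(0,M]$; (ii)~the indices lying entirely in $[-M,0)$; and (iii)~the single index straddling $x=0$, which gives a term bounded by $(2\|u_0\|_\infty)^{1/s}$ using only the $L^\infty$ bound on $u$. The factor~$3$ in the estimate allows for at most three such large jumps — one at the interface itself and up to two additional jumps at the separation curves distinguishing the characteristic regimes described in \cite{Kyoto}. Summing and taking the supremum over $\sigma$ yields the stated bound with $s=\min(\gamma,\nu)$.

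The main obstacle is the analysis at the interface traces: one must argue that even when the characteristics degenerate at $\theta_f$ or $\theta_g$ — precisely where the uniform convexity fails under assumption \eqref{f''} — the $1/s$-power of trace oscillations remains summable. This forces $s\leq\gamma$ because of the H\"older transfer across the interface, and it is here that the restricted non-degeneracy condition is essential: away from $\theta_f,\theta_g$ the fluxes are uniformly convex and the Lipschitz trace-transfer makes the $\nu$-bound dominant, while near the critical points the power-law bound of \eqref{fluxc} and the explicit Lax--Oleinik formula of \cite{Kyoto} must be combined to give the sharp exponent $\gamma$. Tracking the constants carefully through this analysis absorbs the remaining trace-variation into $C_{f,g,\|u_0\|_\infty}$, and the patching step above completes the proof.
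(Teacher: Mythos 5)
Your plan follows essentially the same route as the paper's proof: you split the partition at the interface and at the curves $R_1(t),R_2(t)$ (which is exactly what produces the $3(2\|u_0\|_{\infty})^{1/s}$ term), you treat the regions governed directly by $u_0$ via the H\"older continuity of $(f')^{-1},(g')^{-1}$ under \eqref{fluxc} to get the $M/t$ contribution, and you control the interface-influenced region by transferring the temporal variation of the ``easy-side'' trace through the Lipschitz/H\"older dichotomy of $f_+^{-1}\circ g$ and $g_-^{-1}\circ f$ from Lemma \ref{lipschitz}, which is precisely how $\gamma$ enters in the paper. The one point you leave implicit --- made precise in the paper by splitting at $t_{\pm}^{\min}\gtrless t/2$ and invoking Lemmas \ref{Prop-partial}, \ref{Prop-partial-2} and \ref{lemma:partial-1} --- is how the trace-variation bound of the form $TV\lesssim b/a$ is kept uniform down to the earliest crossing time, but this is a detail of execution rather than a different approach.
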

	\begin{remark}[{\bf Uniform convex fluxes and $BV^{1/2}$}]
	   If the fluxes $f$ and $g$ are uniformly convex then the solution belongs to $BV^{1/2}$. So even for the uniformly convex case the solution goes into a fractional $BV$ space.
	\end{remark}
 Hence in the following theorem for $BV^s$ initial data, $0<s \leq 1$ the previous result states as follow.  Indeed, previous Theorem \ref{fnoteqg} can be seen as a limiting case of following Theorem \ref{th:f-not-g} with $s=0$ and  stating $BV^0=L^\infty$.
	\begin{theorem}[{\bf Smoothing effect for restricted nonlinear fluxes and $BV^s$ initial data}]\label{th:f-not-g}
			Let $f$ and $g$ be two $C^2$ fluxes such that $f(\theta_{f})\not=g(\theta_{g})$ and fluxes satisfy the restricted  non-degeneracy condition \eqref{fluxc} and \eqref{f''}. 
			Let $u(\cdot, t)$ be  the entropy solution of \eqref{1.1} corresponding to an initial data $u_{0}\in BV^{s}(\R)$ for $s\in(0, 1)$. Then, $u(\cdot,t)\in BV^{s_1}(-M,M)$ for each $t>0,\,M>0$ with 
			\begin{equation}
				s_1:=\min\{\ga,\max\{\nu,s\}\}
			\end{equation} 
			the following estimate holds with a positive constant $C_{f,g,||u_{0}||_{\f}}$ depending only on fluxes and the range of the initial data and a constant $D>0$,
			\begin{equation}
				TV^{s_1}(u(\cdot,t),[-M,M])\leq C_{f,g,\norm{u_0}_\f}+\frac{C_{f,g,\norm{u_0}_\f}M}{t}+2\norm{2u_0}_{\f}^{\frac{1}{s_1}}+D\cdot TV^s(u_0).
			\end{equation}
		\end{theorem}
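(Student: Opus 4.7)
The proof follows the strategy already indicated for Theorem \ref{fnoteqg}, the only new ingredient being that the $BV^s$ regularity of the initial data is used to enlarge the exponent away from the interface from $\nu$ to $\max(\nu,s)$, while the interface still caps the exponent at $\gamma$. The plan is to decompose $[-M,M]$ according to the origin of backward characteristics and estimate the $s_1$-total variation on each piece separately. Using the Hopf--Lax type explicit formula and the analysis of characteristics from \cite{Kyoto}, I split $[-M,M]$ into a \emph{free region}, where the backward characteristic issued from $(x,t)$ reaches $t=0$ without meeting $\{x=0\}$, and an \emph{interface region}, where the characteristic emanates from the interface at some time $\tau\in(0,t)$.

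On each side of the free region, equation \eqref{1.1} reduces to a single Kru\v zkov problem with a strictly convex flux satisfying the restricted non-degeneracy assumption \eqref{fluxc}--\eqref{f''}. The known smoothing and $BV^s$-propagation results of \cite{junca1,CJLO,GJC} then give that the solution locally belongs to $BV^{\max(\nu,s)}$, with total variation controlled by $\|u_0\|_\infty$, $TV^s(u_0)$, $M/t$, and a constant depending on the fluxes; this is where the improvement over Theorem \ref{fnoteqg} enters. The constant $D\cdot TV^s(u_0)$ appearing in the statement comes precisely from this propagation estimate, while the term $M/t$ is the Lax--Oleinik smoothing contribution that remains active whenever $\nu\ge s$.

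On the interface region the solution values are transported from the interface traces $u^\pm(\tau)$, which by \eqref{RH} are related by $f(u^+(\tau))=g(u^-(\tau))$. Exactly one of the singular mappings $f_+^{-1}\circ g$ and $g_-^{-1}\circ f$ from \eqref{inverses} is Lipschitz and the other is H\"older with exponent $\gamma$, by Lemma \ref{lipschitz}. The trace linked by the Lipschitz map inherits a $BV^{\max(\nu,s)}$ modulus in $t$ from the side where its characteristic originates (via the Hopf--Lax formula and the already established regularity on the free region), and composing with the $\gamma$-H\"older mapping yields on the other side the bound
\begin{equation*}
|u^\pm(\tau_i)-u^\pm(\tau_{i-1})|^{1/s_1}\;\le\;C^{1/s_1}\,|u^\mp(\tau_i)-u^\mp(\tau_{i-1})|^{\gamma/s_1}.
\end{equation*}
With $s_1=\min\{\gamma,\max(\nu,s)\}$ one has $\gamma/s_1\ge 1$ and $\max(\nu,s)\ge s_1$, so the right-hand side is summable by the $BV^{\max(\nu,s)}$ bound already obtained; this is how the cap $s_1\le\gamma$ enters.

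To conclude, I add the contributions of the two regions, the boundary jumps between them, and the boundary values at $\pm M$, exactly as in the proof of Theorem \ref{fnoteqg}; the term $2\|2u_0\|_\infty^{1/s_1}$ absorbs those jumps while $D\cdot TV^s(u_0)$ records the input fractional variation. The main obstacle I foresee is the coupling step: making sure that the regularity transferred through the interface is consistent with the H\"older composition, i.e.\ that the trace chosen on the Lipschitz side genuinely carries the stronger $BV^{\max(\nu,s)}$ estimate (and not merely the $BV^{\nu}$ estimate coming from the $L^\infty$ smoothing), because only in that case does the argument yield $s_1=\min(\gamma,\max(\nu,s))$ rather than the weaker $\min(\gamma,\nu)$. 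This step requires using the preservation of $BV^s$ along characteristics from $\{t=0\}$ up to the interface, in the spirit of \cite{AS,S}, before applying Lemma \ref{lipschitz}.
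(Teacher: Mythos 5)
Your overall plan --- decomposition into free and interface regions via the Hopf--Lax formula, treatment of the free regions by the smoothing effect of \cite{junca1} combined with propagation of $TV^s(u_0)$ (Lemma \ref{TVs_norm}), and the use of the Lipschitz singular map on one side of the interface --- matches the paper's argument. The gap is in the coupling step on the H\"older side, precisely the step you flag as the ``main obstacle,'' and the fix you propose does not close it. If the source trace is only known to be $BV^{\sigma}$ in time with $\sigma=\max(\nu,s)$, then composing with the $\gamma$-H\"older singular map gives $\abs{u^\pm(\tau_i)-u^\pm(\tau_{i-1})}\le C\abs{u^\mp(\tau_i)-u^\mp(\tau_{i-1})}^{\gamma}$, hence
\begin{equation*}
\sum_i\abs{u^\pm(\tau_i)-u^\pm(\tau_{i-1})}^{1/s_1}\;\le\; C^{1/s_1}\sum_i\abs{u^\mp(\tau_i)-u^\mp(\tau_{i-1})}^{\gamma/s_1},
\end{equation*}
and the right-hand side is controlled by the $BV^{\sigma}$ bound only when $\gamma/s_1\ge 1/\sigma$, that is $s_1\le\gamma\sigma$. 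Having $\gamma/s_1\ge1$ is not sufficient: for increments bounded by the oscillation, a \emph{smaller} exponent produces a \emph{larger} sum (compare $\ell^1$ with $\ell^2$ when $\gamma=\sigma=1/2$, so that $s_1=1/2$, $\gamma/s_1=1$ but $1/\sigma=2$). As written, your argument therefore only delivers the exponent $\gamma\max(\nu,s)$ --- the content of Theorem \ref{theorem:gen:reg} and the theorem following it --- not the claimed $s_1=\min\{\gamma,\max\{\nu,s\}\}$.

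The missing idea is where the restricted condition \eqref{f''} and the hypothesis $\min f\neq\min g$ actually enter. By the Rankine--Hugoniot relation \eqref{RH}, the trace on the source side takes values in $\{v:\,f(v)\ge g(\theta_g)\}$, hence stays at a fixed positive distance $r$ (for instance $\theta_f-\tilde{\theta}_{f}$ in Figure \ref{fig-2}) from the critical point $\theta_f$; on that set $f''$ is bounded below by \eqref{f''}, so $(f')^{-1}$ is Lipschitz there, and Lemma \ref{Prop-partial-2} (with its spatial counterpart, Lemma \ref{lemma:partial-1}) gives a genuine $BV=BV^1$ bound on the source trace, independent of $s$ and $\nu$. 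Composing a $BV^1$ trace with the $\gamma$-H\"older map then yields exactly $BV^{\gamma}$, which is how the cap $s_1\le\gamma$ is attained without the multiplicative loss; the exponent $\max(\nu,s)$ enters only in the free regions and through the Lipschitz map, never through the H\"older composition. A secondary omission is the dichotomy on the interface hitting time ($t_{\pm}^{\min}$ above or below $t/2$), which is what allows the trace lemmas to be applied with a bounded ratio $b/a$ and the spatial lemmas to take over on the remaining piece; but that is a technical matter compared with the composition exponent.
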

	
		We note that the assumption on vanishing points of $f^{\p\p},g^{\p\p}$ is restrictive. We can relax this assumption at the cost of smaller $s_1$. More precisely, we have the following result. 
		%
		\begin{theorem}[{\bf Smoothing effect for $L^\infty$ initial data}]\label{theorem:gen:reg} 
			Let $f$ and $g$ be two $C^2$ fluxes such that $f(\theta_{f})\not=g(\theta_{g})$ satisfying the non-degeneracy condition \eqref{fluxc} with exponent $p,q$ respectively. Let $u(\cdot, t)$ be  the entropy solution of \eqref{1.1} corresponding to an initial data $u_{0}\in L^{\f}(\R)$. Then, for each $t>0, M>0$ and there exists positive constant $C_{f,g,||u_{0}||_{\f}}$ such that 
			\begin{equation*}
				TV^{s}(u(\cdot,t), [-M, M])\leq C_{f,g,\norm{u_0}_\f}+3(2||u_{0}||_{\f})^{1/s}+\frac{C_{f,g,\norm{u_0}_\f}M}{t}
			\end{equation*}
			where $s$ is determined as follows
			\begin{equation}\label{def:s-1}
				s= \g \, \nu.
			\end{equation}
		\end{theorem}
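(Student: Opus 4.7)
The proof will follow the overall strategy of Theorem \ref{fnoteqg}, but with a crucial modification that accounts for the absence of the restricted condition \eqref{f''}. Without \eqref{f''}, the fluxes may lose uniform convexity at points other than $\theta_f, \theta_g$, so the generalized Lax-Oleinik smoothing yields only $BV^\nu$ regularity in the interior, and the singular mapping $f_+^{-1} g$ (or $g_-^{-1}f$) is only $\gamma$-H\"older globally rather than being $C^1$ away from the critical values. These two global weaknesses must be composed with each other, which is what produces the exponent $s=\gamma\nu$ instead of $s=\min(\gamma,\nu)$.

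First, I would apply the generalized Lax-Oleinik smoothing effect \cite{junca1, CJLO} in each half-plane $\{x>0\}$ and $\{x<0\}$ separately, using only the Kru\v{z}kov entropy condition away from the interface $x=0$. This yields $u(\cdot,t) \in BV^\nu_{\mathrm{loc}}$ on each side, with the total variation on an interval of length $2M$ controlled by $CM/t$ for a constant depending only on $f,g,\|u_0\|_\infty$, where $\nu$ is given by \eqref{eq:gamma}. Combined with the backward characteristic analysis of \cite{Kyoto}, this interior smoothing also produces a $BV^\nu$ bound in time for the incoming trace at the interface (that is, $u^-$ if $\min f<\min g$, and $u^+$ in the opposite case).

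Next, I would transmit the regularity across the interface via the Rankine-Hugoniot relation \eqref{RH} and the interface entropy condition \eqref{iec}. Supposing without loss of generality that $\min f<\min g$, so that information flows from left to right through $u^+ = f_+^{-1}g(u^-)$, Lemma \ref{lipschitz} provides the global $\gamma$-H\"older estimate on $f_+^{-1}g$. For any time subdivision $\{t_i\}$ this gives the composition bound
\begin{equation*}
\sum_i |u^+(t_i)-u^+(t_{i-1})|^{1/(\gamma\nu)} \leq C \sum_i |u^-(t_i)-u^-(t_{i-1})|^{1/\nu},
\end{equation*}
upgrading the $BV^\nu$ incoming trace into a $BV^{\gamma\nu}$ outgoing trace. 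In contrast with Theorem \ref{fnoteqg}, the H\"older loss $\gamma$ is suffered at \emph{every} subdivision point and not only near the critical value, which is precisely why the stronger exponent $\min(\gamma,\nu)$ is no longer available.

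Finally, I would transport this $BV^{\gamma\nu}$ time-regularity of the outgoing trace back into the spatial variable on the outgoing side using the backward characteristic foliation of \cite{Kyoto}, and glue it with the interior $BV^\nu \subset BV^{\gamma\nu}$ bounds already established on both sides (since $\gamma\nu\leq\nu$). Summing contributions one recovers the announced estimate, with the three terms corresponding respectively to the H\"older constant of the singular mapping and the piece of the solution originating from the interface ($C_{f,g,\|u_0\|_\infty}$), the at most three extreme jumps bounded by the oscillation ($3(2\|u_0\|_\infty)^{1/s}$), and the interior Lax-Oleinik factor ($C_{f,g,\|u_0\|_\infty}M/t$). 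The main technical obstacle is the gluing step: one must show that the subdivisions of $[-M,M]$ realizing the $s$-variation can be pulled back along the backward characteristics into subdivisions on each side of the interface and a subdivision of the interface trace, \emph{without double-counting} and with the correct H\"older exponent, so that the composed bound $\gamma\nu$ is actually attained and not further degraded.
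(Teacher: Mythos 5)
Your strategy is the paper's own: estimate the incoming interface trace in time via the Lax--Oleinik formula on the incoming side, compose with the H\"older singular map to obtain the outgoing trace in $BV^{\gamma\nu}$, transport back to the spatial variable through the characteristic foliation $u(x,t)=u(0\pm,t_{\pm}(x,t))$, and invoke the interior smoothing away from the interface-influenced region; your composition inequality $\sum_i|u^+(t_i)-u^+(t_{i-1})|^{1/(\gamma\nu)}\le C\sum_i|u^-(t_i)-u^-(t_{i-1})|^{1/\nu}$ is exactly the step the paper performs. One bookkeeping slip first: by Lemma \ref{lipschitz}, when $\min f<\min g$ the left-to-right map $f_+^{-1}g$ is \emph{Lipschitz}; the $\gamma$-H\"older loss sits in the opposite transmission $u^-=g_-^{-1}f(u^+)$ (the paper's written proof works in the configuration where $f_+^{-1}g$ is $\tfrac{1}{p+1}$-H\"older and the incoming side carries exponent $\tfrac1q$, giving $s=\tfrac{1}{q(p+1)}=\gamma\nu$). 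As literally stated, your binding case uses a Lipschitz map and would not exhibit the loss $\gamma$; you need to run the argument on the H\"older direction.

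The substantive gap is the one you flag but leave open. The incoming trace is \emph{not} uniformly $BV^{\nu}$ in time: Lemma \ref{Prop-partial} gives $TV^{1/q}(u(0-,\cdot),(a,b))\le C_g\,b/a$, which degenerates as $a\to0$, and the characteristics feeding $(0,R_1(t))$ may emanate from the interface at arbitrarily small times $t_+(x,t)$. The paper resolves this with a dichotomy on $t_0=\lim_{x\to R_1(t)-}t_+(x,t)$: if $t_0\ge t/2$, then $b/a\le2$ and the trace-plus-H\"older composition covers all of $(0,R_1(t))$; if $t_0<t/2$, one splits at $x_0=\sup\{x:\ t_+(x,t)\ge t/2\}$, applies the composition only on $(0,x_0)$, and estimates $(x_0,R_1(t))$ directly in space via Lemma \ref{lemma:partial-1.2}, whose controlling ratio $(t-t_+(b,t))/(t-t_+(a,t))$ is then at most $2$. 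Without this (or an equivalent device) your constant is not controlled near the foot of $R_1(t)$, so the "gluing step" is not merely a matter of avoiding double-counting --- it requires trading the trace estimate for the spatial one on the part of the region issued from small times.
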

	\begin{theorem}
		With the same assumption as Theorem \ref{theorem:gen:reg},  if $u_{0}\in BV^{s_{0}}$. Then, $u(\cdot, t)\in BV^{s_{2}}$ and there exists positive constants $C_{f,g,||u_{0}||_{\f}}$ and  $D$ such that 
		\begin{eqnarray}
			TV^{s_{2}}(u(\cdot, t),[-M, M])\le C_{f,g,\norm{u_0}_\f}+\frac{C_{f,g,\norm{u_0}_\f}M}{t}+2\norm{2u_0}_{\f}^{\frac{1}{s_2}}+D\cdot TV^{s_{0}}(u_0),
		\end{eqnarray}
	where,
	\begin{eqnarray}
		s_{2}=\gamma\max(s_{0},\nu).
	\end{eqnarray}
	\end{theorem}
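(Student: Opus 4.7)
The plan is to mimic the strategy behind Theorem~\ref{th:f-not-g}, replacing the ``$\min$'' combination (valid only under the restricted condition \eqref{f''}) by a ``product'' combination which is what one gets when the Hölder loss at the interface is not localized at a single critical point. The fundamental decomposition is: on each side of the interface, the entropy solution inherits regularity from two distinct sources, namely (i) the initial datum $u_0\in BV^{s_0}$ transported along genuine characteristics, and (ii) the Lax--Oleinik type smoothing in $BV^{\nu}$ coming from rarefactions. Taking the better of the two yields a local $BV^{\alpha}$ bound with $\alpha=\max(s_0,\nu)$ away from, and up to, the interface. Then the trace on the ``bad'' side of $x=0$ is obtained from the trace on the ``good'' side via a composition with the singular mapping $f_{+}^{-1}g$ or $g_{-}^{-1}f$, which, by the lemma recalled after \eqref{inverses} (proved as Lemma~\ref{lipschitz}), is only Hölder continuous with exponent $\gamma$ in the non-restricted setting. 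Composing a $BV^{\alpha}$ function with a $\gamma$-Hölder map produces a $BV^{\gamma\alpha}$ function, which gives the announced exponent $s_2=\gamma\max(s_0,\nu)$.

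\textbf{Key steps, in order.} First, I would fix $t>0$, $M>0$, and split $[-M,M]$ as $[-M,-\varepsilon]\cup[-\varepsilon,\varepsilon]\cup[\varepsilon,M]$ for an arbitrarily small $\varepsilon>0$. On $[\varepsilon,M]$ (and symmetrically on $[-M,-\varepsilon]$), the solution solves a classical conservation law with smooth strictly convex flux satisfying \eqref{fluxc}, so the Lax--Oleinik/Cheng--Joly--Junca smoothing effect in $BV^{\nu}$ applies (see \cite{junca1,CJLO,GJC}), while the propagation of the initial $BV^{s_0}$ regularity gives a second bound; the two bounds combine to give a $BV^{\max(s_0,\nu)}$ estimate with the $M/t$ scaling as in Theorem~\ref{th:f-not-g}. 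Second, I would analyze the traces $u^{\pm}(\cdot)$ via the characteristic/Hopf--Lax representation of \cite{Kyoto}: partitioning the time axis according to whether the characteristic at $(0,t)$ came from the interior (initial data contribution) or slid along the interface (rarefaction contribution), one obtains a $BV^{\alpha}$ bound on the trace on the side where the inverse map is Lipschitz (WLOG $u^{-}$). Third, I would use the Rankine--Hugoniot condition \eqref{RH}, which gives $u^{+}=f_{+}^{-1}g(u^{-})$ on the relevant branch, and apply the elementary fact
\begin{equation*}
TV^{\gamma\alpha}\bigl(\Phi\circ v,[0,T]\bigr)\leq \bigl(\mathrm{H\ddot ol}_{\gamma}(\Phi)\bigr)^{1/\alpha}\,TV^{\alpha}(v,[0,T])
\end{equation*}
for $\Phi$ being $\gamma$-Hölder, to transfer the $BV^{\alpha}$ trace on the ``good'' side into a $BV^{\gamma\alpha}$ trace on the other side. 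Finally, I would glue the interior $BV^{\max(s_0,\nu)}$ estimates and the two trace estimates at $x=0^{\pm}$ using the standard BV-across-a-point subadditivity (the jump at $x=0$ contributes at most $\|2u_0\|_\infty^{1/s_2}$), letting $\varepsilon\to 0$, to obtain the claimed bound on $TV^{s_2}(u(\cdot,t),[-M,M])$.

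\textbf{Main obstacle.} The technical heart of the argument is the third step: controlling the trace on the ``bad'' side via the global Hölder continuity of the singular map. Under \eqref{f''}, the loss of Lipschitz regularity is confined to a neighborhood of the single critical value, so one could extract a finite number of subintervals and keep the exponent $\max(\nu,s_0)$ elsewhere, explaining the ``$\min$'' appearing in Theorem~\ref{th:f-not-g}. Without \eqref{f''}, the Hölder exponent $\gamma$ is the best global modulus one can hope for, and it must be paid everywhere, forcing the multiplicative $\gamma\cdot\max(s_0,\nu)$. The delicate point is to verify that in the absence of \eqref{f''} no sharper localization argument can be rescued from the control-theoretic framework of \cite{AG} invoked in the appendix: one must ensure that the $BV^{\alpha}$ bound on the ``good'' trace is genuinely of the order produced by the Lax--Oleinik mechanism and by the initial datum, and that the composition estimate above is applied with the correct normalization so that the bound on $u^{+}$ yields the $s_2$-total variation claimed, with the explicit constants $C_{f,g,\|u_0\|_\infty}$ and $D$ absorbing all dependencies.
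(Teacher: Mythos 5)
Your core mechanism is the same as the paper's: the exponent $s_2=\gamma\max(s_0,\nu)$ is produced by composing a quantity controlled in $BV^{\max(s_0,\nu)}$ (the trace on the side from which information crosses, bounded either by the Lax--Oleinik smoothing as in Lemma \ref{Prop-partial} or by the transported initial regularity $u_0(z_{\pm}(0\mp,\cdot))$ with $z_{\pm}$ monotone) with the singular map, which without \eqref{f''} is only globally $\gamma$-H\"older (Lemma \ref{lipschitz}). The paper does not write this proof out; it is obtained by rerunning the proof of Theorem \ref{theorem:gen:reg} and, exactly as in the passage from Theorem \ref{fnoteqg} to Theorem \ref{th:f-not-g}, replacing the $BV^{\nu}$ bound on the trace by the better of $BV^{\nu}$ and $BV^{s_0}$ before composing. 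So you have identified the right two sources of regularity and the right composition estimate (modulo the constant, which should be the $\gamma$-H\"older constant raised to the power $1/(\gamma\alpha)$ rather than $1/\alpha$; this is harmless since all constants are absorbed into $C_{f,g,\norm{u_0}_\f}$ and $D$).

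The one step that would fail as written is the decomposition $[-M,-\varepsilon]\cup[-\varepsilon,\varepsilon]\cup[\varepsilon,M]$ followed by ``letting $\varepsilon\to0$'': the away-from-interface estimates (Theorem \ref{eps}, and the bounds of type \eqref{calc-1}, \eqref{estimation-III}) degenerate like $t/\varepsilon$ as $\varepsilon\to 0$, so no limit can be taken there. The decomposition the paper actually uses (proofs of Theorems \ref{fnoteqg} and \ref{theorem:gen:reg}) is along the Lipschitz curves of the Lax--Oleinik formula, i.e.\ $(-M,0)\cup(0,R_2(t))\cup(R_2(t),R_1(t))\cup(R_1(t),M)$ (resp.\ with $L_1,L_2$ on the other side): on $(0,R_1(t))$ every value equals $u(0+,t_+(x,t))=f_+^{-1}g(u(0-,t_+(x,t)))$ with $t_+$ monotone, so the entire near-interface region is handled by your trace-plus-composition argument (with a further split at $t_+=t/2$ so that Lemma \ref{Prop-partial} applies with $b/a\le 2$), while on $(R_1(t),M)$ and $(-M,0)$ the classical mechanisms give the $C_{f,g,\norm{u_0}_\f}M/t$ term, and the finitely many partition points straddling $0$, $R_2(t)$, $R_1(t)$ contribute the $\norm{2u_0}_\f^{1/s_2}$ terms. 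Since you do describe the trace analysis via $t_+(x,t)$, this is an organizational repair rather than a missing idea, but the $\varepsilon$-limit must be removed for the argument to close.
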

	In general, away from the interface, the expected fractional regularity  
	is $\min(1/p,1/q)$ \cite{junca1} which is always bigger than $s$ in \eqref{def:s}.  
	 In particular, near the interface, for $BV$ initial data, a $BV$ regularity for the entropy solution cannot be expected \cite{AS}. At most, a $BV^{1/2}$ regularity is possible. Getting   $BV$ regularity of entropy solution can be impossible  near the interface. The situation is better far from the interface.
	
	 Far from the interface, the constant $\gamma$ plays no role. The following  theorem gives estimates which are  sharp  for small time.
	\begin{theorem}[{\bf  Regularity outside the interface}]\label{eps}
			Let $f$ and $g$ be the fluxes with $f(\theta_{f})\not=g(\theta_{g})$.  Let $u(\cdot, t)$ be  the entropy solution of \eqref{1.1} corresponding to an initial data $u_{0}\in BV^{s}(\R)$ for $s\in(0, 1)$. Then the following holds.
			\begin{enumerate}
				\item If $f,g$ satisfies \eqref{fluxc} with exponent $p,q$ respectively, for any $t>0$, $\epsilon > 0$, then there exists a constant $C_{f,g, ||u_{0}||_{\f}}>0$
				\begin{equation}\label{eps-1}
					TV^{s_1}(u(\cdot,t),(-\f, \e]\cup[\e, \f))\leq \frac{C_{f,g,\norm{u_0}_\f}t}{\e}+ 2TV^{s_1}(u_{0})+2(2||u_{0}||_{\infty})^{1/s_1}
				\end{equation}
				for $s_1=\min\{p^{-1},q^{-1},s\}$. 
				\item If we assume \eqref{f''} that $f^{\p\p},g^{\p\p}$ vanish only at $\theta_f$ and $\theta_g$ respectively then we have
				\begin{align}
					TV^{s}(u(\cdot,t),(-\f, \e]\cup[\e, \f))&\leq \frac{C_{f,g,\norm{u_0}_\f}}{\min\{f^{\p\p}(v);[(f^{\p})^{-1}(\e/t),S_{f,g,\norm{u_0}_\f}]\}}\frac{t}{\e}+TV^{s}(u_0)\nonumber\\
					&+\frac{C_{f,g,\norm{u_0}_\f}}{\min\{g^{\p\p}(v);[-S_{f,g,\norm{u_0}_\f},(g^{\p})^{-1}(-\e/t)]\}}\frac{t}{\e}+2(2||u_{0}||_{\infty})^{1/s}\label{eps-2}
				\end{align}
				for any $t>0,\e>0$ where $S_{f,g,\norm{u_0}_\f}$ is defined as 
				$$S_{f,g,\norm{u_0}_\f}=\max \left\{ \|u_0\|_\infty ,   \sup_{|v| \leq \|u_0\|_\infty }|f^{-1}_+ ( g(v))|,  \sup_{|v| \leq \|u_0\|_\infty }|g^{-1}_- ( f(v))|  \right\}.$$
			\end{enumerate}
		\end{theorem}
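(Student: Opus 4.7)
\textbf{Proof plan for Theorem~\ref{eps}.}

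By the symmetry of the problem I would only treat the half-line $[\varepsilon,\infty)$; the estimate on $(-\infty,-\varepsilon]$ is analogous with $g,\theta_g,q$ replacing $f,\theta_f,p$. The pivotal tool is the Hopf--Lax type representation from \cite{Kyoto}, which tells us that for every $(x,t)$ with $x\geq\varepsilon$ and $t>0$ the backward characteristic through $(x,t)$ hits \emph{either} the initial line $\{t=0\}$ at some $y>0$, in which case $u(x,t)=u_0(y)$, \emph{or} the interface $\{x=0\}$ at some $\tau\in(0,t)$, in which case $u(x,t)=u^+(\tau)$ and the slope relation $f'(u(x,t))=x/(t-\tau)$ holds. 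I would accordingly split $[\varepsilon,\infty)=A_t\cup B_t$ (initial-data--originating vs.\ interface--originating) and estimate each part independently, gluing them with an oscillation correction at the transition that contributes the $2(2\|u_0\|_\infty)^{1/s_1}$ term.

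On $A_t$ the analysis reduces to a standard Cauchy problem away from any discontinuous-flux effect, so the fractional smoothing estimates of \cite{junca1,CJLO,GJC} give a contribution bounded by $TV^{s_1}(u_0)$ (plus an oscillation, already absorbed). The heart of the argument is the bound on $B_t$. First I observe that $B_t\subset[\varepsilon,tL]$, where $L=\sup\{|f'(v)|:|v|\leq S\}$ and $S$ is the uniform $L^\infty$ bound for $u$ provided by the maximum principle; moreover on $B_t$ the non-crossing of characteristics makes $x\mapsto f'(u(x,t))$ monotone nondecreasing with $f'(u(x,t))\geq \varepsilon/t>0$. The Hopf--Lax formula yields the one-sided Oleinik-type inequality
\begin{equation*}
f'(u(x+h,t))-f'(u(x,t))\;\leq\;\frac{h}{t-\tau^*(x,t)}\;\leq\;\frac{h L}{\varepsilon},
\end{equation*}
so that $TV(f'(u(\cdot,t));B_t)\leq C_{f,g,\|u_0\|_\infty}\,t/\varepsilon$. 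This is exactly where the factor $t/\varepsilon$ is produced.

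Finally I convert this $BV$ bound on $f'(u)$ into a bound on $u$ itself. Under the plain non-degeneracy \eqref{fluxc}, Lemma~\ref{lipschitz} (applied on the compact range $[(f')^{-1}(\varepsilon/t),S]$) gives $|u-v|^p\leq C|f'(u)-f'(v)|$, hence $TV^{1/p}(u(\cdot,t);B_t)\leq C\,t/\varepsilon$, which combined with $A_t$ yields \eqref{eps-1}. Under the restricted assumption \eqref{f''}, $f''$ is bounded below on that same compact range by $\min\{f''(v):v\in[(f')^{-1}(\varepsilon/t),S_{f,g,\|u_0\|_\infty}]\}$, so the mean value theorem upgrades the bound to a full $BV$ estimate whose constant is the explicit inverse of this minimum, producing \eqref{eps-2}. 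The same reasoning on $(-\infty,-\varepsilon]$ contributes the symmetric $g''$-term.

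The main obstacle I anticipate is the rigorous justification of the one-sided Oleinik-type inequality on $B_t$ for a merely $L^\infty$ interface trace $u^+(\cdot)$: one must use the variational/Hopf--Lax structure from \cite{Kyoto} rather than any BV regularity of $u^+$, and carefully handle the shocks where $B_t$ and $A_t$ meet so that the transition generates only the controlled oscillation term $2(2\|u_0\|_\infty)^{1/s_1}$. The second technical point is to ensure that the interior of $B_t$ on which $f''$ is controlled indeed avoids $\theta_f$, which follows from $f'(u)\geq\varepsilon/t$ on $B_t$ but must be tracked uniformly when combining with the $A_t$ estimate.
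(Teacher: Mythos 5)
Your proposal follows essentially the same route as the paper: split $[\e,\infty)$ at $R_1(t)$ into the interface-originating piece, where $u=(f')^{-1}\bigl(x/(t-t_+(x,t))\bigr)$ from Theorem~\ref{constant}, and the initial-data piece, where $u(x,t)=u_0(z_+(x,t))$ gives $TV^{s}(u_0)$ directly; the $t/\e$ factor then comes exactly as in the paper from $x\ge\e$, the bound $t-t_+(x,t)\ge \e/L$, and the monotonicity of $t_+$, with one transition point absorbed into the oscillation term. Two small inaccuracies: the claim that $x\mapsto f'(u(x,t))$ is monotone on the interface-originating region does not follow from non-crossing of characteristics (the emission times $t_+$ vary, so the speeds need not be ordered), but this is harmless since your one-sided inequality $f'(u(x+h,t))-f'(u(x,t))\le h/(t-t_+(x,t))\le hL/\e$ together with the $L^\infty$ bound already yields the required $BV$ control of $f'(u(\cdot,t))$; and the conversion from $TV(f'(u))$ to $TV^{1/p}(u)$ rests on the H\"older continuity of $(f')^{-1}$, i.e.\ Lemma~\ref{lemma:Holder}, not Lemma~\ref{lipschitz}.
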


	\begin{remark}
		All the regularity results in Theorems \ref{fnoteqg},  \ref{th:f-not-g},  \ref{theorem:gen:reg},   \ref{eps} are extendable to fractional Sobolev space $W^{s,p}$ with the same exponent $s$,  up to any $\varepsilon>0$, thanks to  the embedding  $BV^s\subset W^{s-\varepsilon,1/s} $   for all $\varepsilon \in (0,s)$ \cite{junca1}.
	\end{remark}
	
	Now we discuss about  the optimality result. The assumption $\min f\not=\min g$ forbids the favourable case $f=g$, that is without interface.
	 Here, the optimality of Theorem \ref{th:f-not-g} is proved in the best case with uniformly convex fluxes. For this purpose  examples are built with the optimal regularity and not more up. The same construction is valid with a power law on one side of the interface.   These examples highlight the sharpness of Theorem \ref{th:f-not-g}.
	\begin{theorem}[{\bf Blow-up  for critical  $BV^s$ semi-norms}]\label{example}
		Let $p\geq1$ and $\e>0$. Then there exists fluxes $f,g$ and an initial data $u_0\in BV(\R)$ such that 
		\begin{enumerate}
			\item the flux $f$ satisfies the non-degeneracy condition \eqref{fluxc} with exponent $p$,
			\item the function $g$ is uniformly convex,
			\item the corresponding entropy solution $u(\cdot,T)\notin BV^s_{loc}(\R)$ for some $T>0$ and  $s=\frac{1}{p+1}+\e$. 
		\end{enumerate}
	\end{theorem}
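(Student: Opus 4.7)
The plan is to exhibit specific fluxes and a $BV$ initial datum whose entropy solution sends countably many shocks through the interface so that, by the H\"older-$\gamma$ estimate on the singular mapping $f^{-1}_+ \circ g$ proved in Lemma~\ref{lipschitz} (which is sharp at the values where $g$ attains the level $\min f$), each transmitted incoming jump of size $a$ near that value produces an outgoing jump of size comparable to $a^\gamma$ in the right trace, with $\gamma = 1/(p+1)$. Choosing jump sizes $(a_n)$ summable but with $\sum a_n^{\gamma/s} = \infty$ for $s = \gamma + \epsilon$ will then force the $s$-total variation of $u(\cdot, T)$ to be infinite.

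Concretely, I would take $g(u) = u^2/2$ and $f(u) = \frac{1}{p+1}|u-a|^{p+1} + b$ with $a, b > 0$, so that $f \in C^2$, the non-degeneracy conditions \eqref{fluxc} and \eqref{f''} hold with exponent $p$, and \eqref{hyp:diff-min} holds with $\min f = b > 0 = \min g$; by \eqref{eq:gamma} this gives $\gamma = 1/(p+1)$. Let $v^{*} := \sqrt{2b} > 0$ be the unique positive value with $g(v^{*}) = \min f$, so that a direct computation shows $f^{-1}_+ \circ g$ is bi-H\"older of exponent $\gamma$ on a one-sided neighbourhood of $v^{*}$. Fix $\alpha \in \left(1,\, 1 + \epsilon(p+1)\right)$ and set $a_n := n^{-\alpha}$. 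On $x < 0$ I build $u_0 \in BV(\R)$ as a superposition of well-separated pulses on the plateau value $v^{*}$, the $n$-th pulse taking the value $v^{*} + a_n$ on a narrow interval centred at $x_n \to - \infty$; on $x > 0$ take $u_0 \equiv a = f^{-1}_+(g(v^{*}))$. Since the total variation of $u_0$ is $2 \sum a_n < \infty$, we have $u_0 \in BV(\R)$.

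Using the Hopf--Lax representation of~\cite{Kyoto} on each side of the interface, I would then verify that each pulse produces a temporal jump of order $a_n$ in the left trace $u^-(t)$ at a distinct arrival time $t_n < T$; by the Rankine--Hugoniot relation \eqref{RH} together with the bi-H\"older estimate from Lemma~\ref{lipschitz}, this forces a temporal jump of order $a_n^\gamma$ in the right trace $u^+(t)$; following the $f$-characteristics emitted from $\{0\} \times \{t_n\}$ up to time $T$ produces a spatial jump of the same order at a point $\xi_n \in (0, M)$. Testing the $s$-total variation against the subdivision $\{\xi_n\}$ gives
\begin{equation*}
	TV^s\!\left(u(\cdot,T),[0,M]\right) \;\geq\; \sum_n |u(\xi_n^+,T) - u(\xi_n^-,T)|^{1/s} \;\geq\; c \sum_n n^{- \alpha \gamma / s},
\end{equation*}
and the exponent satisfies $\alpha \gamma / s \leq 1$ by the choice of $\alpha$, so the series diverges.

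The main obstacle is the clean non-interaction argument: the heights, widths and positions of the pulses must be tuned so that the emerging shocks and rarefactions on $x < 0$ do not catch one another before reaching the interface, so that the traces $u^{\pm}(t)$ remain confined to a fixed neighbourhood of $v^{*}$ (hence the constant in the bi-H\"older estimate is uniform in $n$), and so that on $x > 0$ the characteristic carrying the $n$-th temporal jump survives until time $T$ without being absorbed by a neighbouring shock. Appropriate rapid spacing $|x_n| \gg |x_{n+1}|$ and sufficiently narrow pulse widths make these conditions arrangeable using the explicit characterisation of interface characteristics from~\cite{Kyoto}, yielding the desired blow-up.
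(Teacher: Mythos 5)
Your core mechanism is the same one the paper exploits: with $\min f>\min g$ the transmission map $f_+^{-1}\circ g$ is only H\"older of exponent $\gamma=\frac{1}{p+1}$ at the level $g(v^\ast)=\min f$, so incoming jumps of size $a_n$ become outgoing jumps of size $\sim a_n^{\gamma}$, and the exponent arithmetic ($\alpha\in(1,1+\e(p+1))$, $u_0\in BV$, $\sum a_n^{\gamma/s}=\infty$) is correct. But there is a genuine gap, and part of the stated geometry is outright incompatible with the conclusion. You place the pulses at $x_n\to-\infty$ while demanding that they all reach the interface at times $t_n<T$ for a fixed $T$. Since the wave speeds on $x<0$ are bounded by $\sup\{|g'(v)|:|v|\le \|u_0\|_\infty\}$, finite speed of propagation forces $t_n\ge |x_n|/C\to\infty$; at any fixed time $T$ only finitely many pulses have been transmitted, the rest of the solution is governed by the uniformly convex flux $g$ acting on $BV$ data, and $u(\cdot,T)$ is in fact locally $BV$. (Your later requirement $|x_n|\gg|x_{n+1}|$ contradicts $x_n\to-\infty$ and suggests you actually want the pulses to accumulate at the interface, but as written the construction fails.)

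Even after repositioning the pulses so that the arrival times accumulate below $T$, the step you defer as "the main obstacle" is the actual crux, and it is more delicate than spacing and narrowing the pulses. The transmitted waves on $x>0$ sit on the background $\theta_f$, where $f'(\theta_f)=0$: each transmitted pulse is a shock followed by a rarefaction that overtakes it, so its amplitude at time $T$ decays from $a_n^{\gamma}$ unless the duration of the $n$-th pulse at the interface is comparable to the remaining time $T-t_n$. This forces $t_n\to T$ with durations tuned against $T-t_n$ --- precisely the role of the sequences $t_{2k+1},t_{2k+2}$ with $1-t_{2k+1}=k^{-(\B-\al)}(1-t_{2k})$ and $t_{2k+2}-t_{2k+1}=k^{-\la}$ in the paper, together with the monotonicity checks \eqref{ineq:ht-1} and $\Phi'<0$ that rule out interactions. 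The paper avoids proving all of this forward in time by prescribing the profile at $t=T$ and invoking the backward construction of Proposition \ref{Prop:BC} (from \cite{AG}) to manufacture an admissible $L^\infty$ datum, then restarting at an intermediate time to upgrade it to $BV$. Your forward approach could in principle be carried out, but the survival-at-time-$T$ analysis you leave open is the theorem's real content, not a routine arrangement.
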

	The proof of Theorem \ref{example} is postponed in  Section \ref{sec:opt} and \ref{ap:opt}.

	\section{Preliminaries}\label{def}
	The fundamental paper used here is \cite{Kyoto} where Adimurthi and Gowda settled an important  foundation of the theory on scalar conservation laws with an interface and two convex fluxes. 
	In this paper the author proposed the natural entropy condition \eqref{iec} at the interface which means that no information comes only from the interface but crosses or go towards the interface.   Such entropy condition is in the spirit of Lax-entropy conditions for shock waves. To make this paper self-contained, we recall some definitions and results.

	The following theorem can be found in \cite{Kyoto} Lemma 4.9 at page 51. It is  a Lax-Oleinik or Lax-Hopf formula for the initial value problem \eqref{1.1}.
	\begin{theorem}[\cite{Kyoto}]\label{constant}
		Let $u_{0}\in L^{\f}(\R)$, then there exists the entropy solution $u(\cdot, t)$ of \eqref{1.1} corresponding to an initial data $u_{0}$. Furthermore, there exist Lipschitz curves $R_{1}(t)\geq  R_{2}(t)\ge0$ and $L_{1}(t)\leq  L_{2}(t)\le0$, monotone functions $z_{\pm}(x,t)$ \textcolor{black}{non-decreasing in $x$ and non-increasing in $t$} and $t_{\pm}(x,t)$ \textcolor{black}{non-increasing in $x$ and non-decreasing in $t$} such that the solution $u(x,t)$ can be given by the explicit formula for  almost all $t>0$,
		\begin{eqnarray*}
			u(x,t)=\left\{\begin{array}{lllll}
				(f')^{-1}\left(\frac{x-z_+(x,t)}{t}\right)& \mbox{ if }& x\ge R_{1}(t),\\
				(f')^{-1}\left(\frac{x}{t-t_{+}(x,t)}\right) &\mbox{ if }& 0\le x<R_{1}(t),\\
				(g')^{-1}\left(\frac{x-z_-(x,t)}{t}\right) &\mbox{ if }& x\le L_{1}(t),\\
				(g')^{-1}\left(\frac{x}{t-t_{-}(x,t)}\right) &\mbox{ if }& L_{1}(t)<x<0.
			\end{array}\right.
		\end{eqnarray*}
		Furthermore, if $f(\theta_{f})\ge g(\theta_{g})$ then $R_1(t)=R_2(t)$ and if $f(\theta_{f})\le g(\theta_{g})$ then $L_1(t)=L_2(t)$. We also  have only three cases and following formula to compute the solution: 
		
		\begin{description}
			\descitem{Case 1:}{thm1-case-1} $L_{1}(t)=0$ and $R_{1}(t)=0$,
			\begin{eqnarray*}
				u(x, t)=\left\{\begin{array}{lllll}
					u_{0}(z_{+}(x, t))&\mbox{ if }& x>0,\\
					u_{0}(z_{-}(x, t))& \mbox{ if }& x<0.
				\end{array}\right.
			\end{eqnarray*}
			\descitem{Case 2:}{thm1-case-2} $L_{1}(t)=0$ and $R_{1}(t)>0$, then
			\begin{eqnarray*}
				u(x,t)=\left\{\begin{array}{lllll}
					f_{+}^{-1}g(u_{0}(z_{+}(x,t)))&\mbox{ if }& 0<x<R_{2}(t),\\
					f_{+}^{-1}g(\theta_{g}) &\mbox{ if }& R_{2}(t)\le x\le R_{1}(t),\\
					u_{0}(z_{-}(x,t))& \mbox{ if }& x<0.
				\end{array}\right.
			\end{eqnarray*}
			\descitem{Case 3:}{thm1-case-3} $L_{1}(t)<0$, $R_{1}(t)=0$, then
			\begin{eqnarray*}
				u(x,t)=\left\{\begin{array}{lllll}
					g_{-}^{-1}f(u_{0}(z_{-}(x,t)))&\mbox{ if }& L_{2}(t)<x<0,\\
					u_{0}(z_{-}(x, t))&\mbox{ if }&  x\le L_{1}(t),\\
					g_{-}^{-1}f(\theta_{f})& \mbox{ if }& L_{1}(t)<x<L_{2}(t).
				\end{array}\right.
			\end{eqnarray*}
		\end{description}
	\end{theorem}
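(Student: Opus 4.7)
\textbf{Proof proposal for Theorem~\ref{constant}.}
My plan is to construct the entropy solution via a Lax--Hopf / Hamilton--Jacobi framework applied on each side of the interface and to match the pieces along $x=0$ using the Rankine--Hugoniot relation \eqref{RH} together with the interface entropy condition \eqref{iec}. Write $U(x,t)=\int_{-\infty}^{x}u(y,t)\,dy$ (suitably renormalized), so that formally $U$ solves a Hamilton--Jacobi equation with Hamiltonian $f$ for $x>0$ and $g$ for $x<0$, with a flux--coupling at $x=0$. Let $f^{*},g^{*}$ denote the Legendre transforms. The classical Hopf--Lax formula on each side gives candidates $U_f(x,t)=\inf_{y}\{U_0(y)+t\,f^{*}((x-y)/t)\}$ for $x>0$ and $U_g(x,t)=\inf_{y}\{U_0(y)+t\,g^{*}((x-y)/t)\}$ for $x<0$; denote by $z_+(x,t)$ and $z_-(x,t)$ respectively the minimizers, which are non-decreasing in $x$ (monotonicity of the backward characteristic map) and non-increasing in $t$ by a standard calculus of variations argument.

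Next I would incorporate the interface. A characteristic emanating from $x=0$ at time $t_+(x,t)\in[0,t]$ carries a constant value dictated by the Rankine--Hugoniot condition $f(u^+)=g(u^-)$ and by the entropy condition \eqref{iec}, which forbids characteristics to leave the interface on \emph{both} sides. Hence at most one of the two sides sees ``fresh'' characteristics coming from the interface: if $f(\theta_f)\geq g(\theta_g)$ the deficit is on the right and characteristics fan out into $x>0$ with slope $x/(t-t_+)$, giving the formula with $(f')^{-1}(x/(t-t_+))$ on $0<x<R_1(t)$, while if $f(\theta_f)\leq g(\theta_g)$ the symmetric statement holds on $x<0$. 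The curves $R_1(t)\geq R_2(t)\geq 0$ (resp.\ $L_1(t)\leq L_2(t)\leq 0$) are defined as the boundaries of the region reached by fans from $0$: $R_1$ is the rightmost such characteristic, carrying value $f_+^{-1}g(\theta_g)$, and $R_2$ separates the zone where the transmitted value is $f_+^{-1}g(u_0(z_+))$ from a constant plateau equal to $f_+^{-1}g(\theta_g)$; their Lipschitz character follows from the finite speed of propagation bound $\|f'\|_{L^\infty(K)}$, $\|g'\|_{L^\infty(K)}$ on the range of $u$.

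To identify the formulas in the three cases I would argue via competition of the two infima. Fix $(x,t)$ with $x>0$. Either the optimal trajectory from $(x,t)$ traced backwards never crosses $x=0$, in which case the standard Hopf--Lax formula on $\{x>0\}$ applies and $u(x,t)=(f')^{-1}((x-z_+)/t)=u_0(z_+(x,t))$; or it does, reaching $(0,t_+(x,t))$ with some velocity $x/(t-t_+)$, continues inside $\{x<0\}$ with a velocity determined by $f(u^+)=g(u^-)$, and terminates at $(z_-,0)$. Comparing the two costs gives the case split \descref{thm1-case-1}{Case 1}--\descref{thm1-case-3}{Case 3}. The monotonicity of $z_\pm,t_\pm$ follows from the strict convexity of $f^{*},g^{*}$ by the usual three-point or no-crossing argument for minimizers.

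Finally I would verify the two components of the entropy condition: Kru\v{z}kov entropy inequalities on each open half-plane hold because the formulas are the classical Lax--Oleinik representations there (Case 1), and on the fan regions the solution is a classical rarefaction. At the interface, the construction is precisely set up so that characteristics never leave $\{x=0\}$ into both sides simultaneously, yielding \eqref{iec}. The main technical obstacle is to show simultaneously that the three cases are exhaustive, mutually exclusive, and that the monotone selection of minimizers $z_\pm,t_\pm$ is globally consistent (in particular single-valued a.e.\ in $x$), which requires the strict convexity of $f$ and $g$ and a careful envelope/monotonicity argument of the type used by Lax and Oleinik; the existence of $R_i, L_i$ as Lipschitz curves then follows from the implicit equation characterizing the optimal value at the interface.
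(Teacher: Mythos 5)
First, a point of reference: the paper itself does not prove this statement --- it is imported verbatim from \cite{Kyoto} (Lemma 4.9 there), so there is no in-paper argument to compare yours against; what follows measures your sketch against the construction in that reference. Your overall strategy --- Hopf--Lax formulas for the primitive $U$ on each half-line, backward minimizers $z_\pm$, and a matching at $x=0$ via \eqref{RH} and \eqref{iec} --- is indeed the strategy of \cite{Kyoto}. But the proposal has a genuine gap at the one place where the theorem is not a routine gluing of two Lax--Oleinik formulas, namely the interface itself. You describe the value at $(x,t)$ as a ``competition of two infima'' (a purely one-sided path versus a path crossing $0$ once). The correct value function in \cite{Kyoto} minimizes over paths that may \emph{dwell} at $x=0$ for a time interval of positive length, paying there the running cost $\min\bigl(f^*(0),g^*(0)\bigr)=-\max\bigl(f(\theta_f),g(\theta_g)\bigr)$ per unit time (recall $f^*(0)=-\min f$, $g^*(0)=-\min g$). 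It is exactly these dwelling paths that produce (i) the plateau $u\equiv f_{+}^{-1}g(\theta_g)$ on $[R_2(t),R_1(t)]$ and the distinction between $R_1$ and $R_2$, and (ii) the asymmetry of the statement: the sign of $f(\theta_f)-g(\theta_g)$ decides on which side waiting at the interface is profitable, hence which of $R_1=R_2$ or $L_1=L_2$ collapses. Your sketch asserts the plateau and the sign dichotomy but derives neither; with only the two one-sided Hopf--Lax functionals and a single-crossing class of paths, there is no mechanism in your formulation that outputs the constant value $f_{+}^{-1}g(\theta_g)$ on an interval of positive length.

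The second gap is the trichotomy of cases. That one cannot have $L_1(t)<0$ and $R_1(t)>0$ simultaneously is the quantitative content of \eqref{iec}, and in \cite{Kyoto} it is obtained from the dynamic programming structure (an optimal path cannot leave the interface into both half-planes at the same time, since one of the two exits is always strictly dominated by the dwelling strategy); your sketch invokes \eqref{iec} but the condition as stated only holds for a.e.\ $t$ and does not by itself rule out the simultaneous occurrence for the specific $t$ under consideration. Relatedly, the single-valuedness a.e.\ and the claimed monotonicities of $z_\pm$ and $t_\pm$ (non-decreasing/non-increasing in $x$, and the opposite in $t$) require the no-crossing argument to be run for the \emph{combined} interface functional, not just for the classical one-sided minimizers; you acknowledge this as ``the main technical obstacle'' but it is precisely where the proof lives. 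The plan is the right plan, but as written it reproduces the easy parts of the theorem (the one-sided formulas) and defers all of the interface-specific content.
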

	\begin{center}
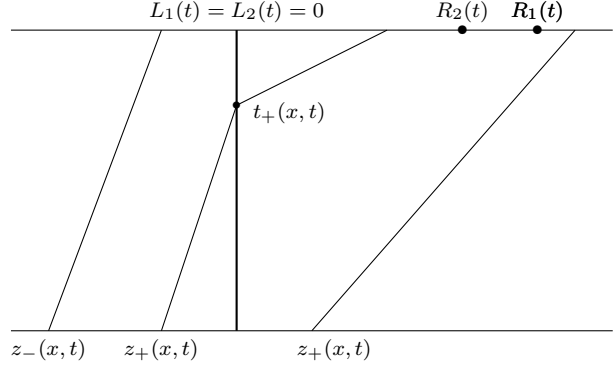
\label{figure-1}
		\begin{tikzpicture}
			\draw (-3,0)--(5,0);
			\draw[thick] (0,0)--(0,4);
			\draw (-3,4)--(5,4);
			\draw (2,4)--(0,3);
			\draw (0,3)--(-1,0);
			\draw (4.5,4)--(1,0);
			\draw (-1,4)--(-2.5,0);
			\draw (.7,2.9) node{\scriptsize $t_{+}(x,t)$};
			\draw (4,4.25) node{\scriptsize $R_{1}(t)$};
			\draw (0,3) node{\tiny $\bullet$};
			\draw (4,4) node{\scriptsize $\bullet$};
			\draw (3,4.25) node{\scriptsize $R_{2}(t)$};
			\draw (3,4) node{\scriptsize $\bullet$};
			\draw (4,4.25) node{\scriptsize $R_{1}(t)$};
			\draw (0,4.25) node{\scriptsize $L_{1}(t)=L_{2}(t)=0$};
			\draw (-1,-.25) node{\scriptsize $z_{+}(x,t)$};
			\draw (-2.5,-.25) node{\scriptsize $z_{-}(x,t)$};
			\draw (1.3,-.25) node{\scriptsize $z_{+}(x,t)$};
		\end{tikzpicture}
		\captionof{figure}{An illustration of solution for \descref{thm1-case-2}{Case 2} and $L_{i}(t)$ and $R_{i}(t)$ curves}
	\end{center}
	There is a maximum principle  for such entropy solutions, but more complicate than for $f=g$,
	\begin{equation}\label{eq:max}
		\|u\|_\infty \leq \max \left(  \|u_0\|_\infty ,   \sup_{|v| \leq \|u_0\|_\infty }|f^{-1}_+ ( g(v))|,  \sup_{|v| \leq \|u_0\|_\infty }|g^{-1}_- ( f(v))|  \right)=:S_{f,g,\norm{u_0}_\f}.
	\end{equation}
\begin{center}
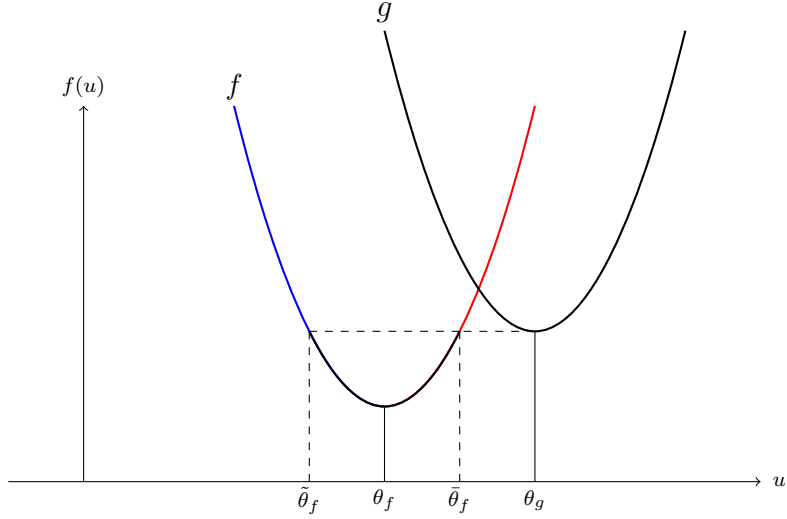
\label{fig-2}
	\begin{tikzpicture}
		\draw[->] (-5,-1)--(5,-1);
		\draw[->] (-4,-1)--(-4,4);
		\draw[dashed] (2,1)--(1,1);
		\draw[dashed] (1,1)--(1,-1);
		\draw[dashed] (1,1)--(-1,1);
		\draw[dashed] (-1,1)--(-1,-1);
		\draw (0,0)--(0,-1);
		\draw (2,1)--(2,-1);
		\draw[thick,red] (0,0) parabola (2,4);
		\draw [thick](0,0) parabola (1,1);
		\draw[thick,blue] (0,0) parabola (-2,4);
		\draw[thick](0,0) parabola (-1,1);
		\draw[thick] (2,1) parabola (4,5);
		\draw[thick] (2,1) parabola (0,5);
		\draw (-2, 4.25) node{\small $f$};
		\draw (0, 5.25) node{ $g$};
		\draw (5.25, -1) node{\scriptsize $u$};
		\draw (-4,4.25) node{\scriptsize $f(u)$};
		\draw (0, -1.25) node{\scriptsize $\theta_{f}$};
		\draw (2, -1.25) node{\scriptsize $\theta_{g}$};
		\draw (1, -1.25) node{\scriptsize $\bar{\theta}_{f}$};
		\draw (-1, -1.25) node{\scriptsize $\tilde{\theta}_{f}$};
	\end{tikzpicture}
	\captionof{figure}{Illustration of the fluxes}
\end{center}
Without loss of generality, as it is shown in the Figure \ref{fig-2}, we can assume that $\min f<\min g$ for the proofs of all main results below. This choice inforces the values of the entropy solution at the interface being outside $(\tilde{\theta}_{f},\bar{\theta}_{f})$.  Thus the function $f'$ is far from $0$ at the interface. Moreover, the function $f'^{-1}$ is Lipschitz outside $(\tilde{\theta}_{f},\bar{\theta}_{f})$. For restricted fluxes the function $f_{+}^{-1}$ is also Lipschitz outside $(\tilde{\theta}_{f},\bar{\theta}_{f})$. The Figure \ref{fig-2} illustrate that singular maps $f_{+}^{-1}g$ and $g_{-}^{-1}f$ are Lipschitz and H\"older continuous respectively which are proved in  \ref{appendix-a}, Lemma \ref{lipschitz}. 
	\section{Proof of main results}\label{mainproof}
	This long section is devoted to prove the fractional $BV$ regularity of the entropy solution depending on the degeneracy of the fluxes.   A key point is first to estimate the regularity of the traces at the interface.   In the next subsection \ref{2.3} we start to  study the fractional  regularity  in a favourable case when  the traces at the interface  are not near the critical values $\theta_f$ or $\theta_g$. Spatial $BV^{s}$ estimates for trace values issued from the interface is studied in Subsection \ref{spatial}. Moreover, only traces issuing from the initial data are considered. The crossing of the interface is studied later in Subsection \ref{proofs}.
	\subsection{Regularity when traces are far from critical values}\label{2.3}
	{We first prove fractional $BV$ estimates  when  the traces  at $x=0$ are far from the critical values $\theta_f$ or $\theta_g$.
    \begin{lemma}[Fractional $BV$ estimate for the traces of the solution]\label{Prop-partial}
	  Let $f, g$ be satisfying \eqref{fluxc} with exponents $p,q$ respectively. Let $0<a<b<\f$. Then the following holds:
	  \begin{enumerate}
	      \item If $u(0-,t)>\theta_{g}$ for a.e. $t\in(a,b)$, then we have
	     \begin{equation}\label{TV-bd-1}
					TV^{\frac{1}{q}}(u(0-,\cdot),(a,b))\leq  C_g \frac{b}{a},
				\end{equation}
				where $C_{g}>0$ is constant depending only on $g$.
			\item If $u(0+,t)<\theta_{f}$ for a.e. $t\in(a,b)$, then we have
			\begin{equation}\label{TV-bd-2}
					TV^{\frac{1}{p}}(u(0+,\cdot),(a,b))\leq C_f \frac{b}{a},
				\end{equation}
				where $C_f>0$ is a constant depending on $f$.
	  \end{enumerate}
	\end{lemma}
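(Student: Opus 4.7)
The plan is to derive an explicit formula for each interface trace and then convert the $BV^{1/q}$ (resp.\ $BV^{1/p}$) bound into an ordinary total variation bound on $g'\circ v$ (resp.\ $f'\circ v$) via the non-degeneracy condition \eqref{fluxc}. Writing $v(t):=u(0-,t)$ in part (1), I would first use the hypothesis $v(t)>\theta_g$ a.e.\ on $(a,b)$ to rule out \descref{thm1-case-3}{Case 3} of Theorem \ref{constant}, in which the left trace equals $g^{-1}_-(f(\theta_f))\le \theta_g$, strict by the assumption \eqref{hyp:diff-min} that $\min f\neq \min g$. Hence a.e.\ on $(a,b)$ we are in \descref{thm1-case-1}{Case 1} or \descref{thm1-case-2}{Case 2}, where $L_1(t)=0$, and passing to the limit $x\to 0^-$ in the formula of Theorem \ref{constant} gives
$$v(t)=(g')^{-1}\!\left(-\frac{z(t)}{t}\right),\qquad z(t):=z_-(0,t)\le 0,$$
with $z$ non-increasing in $t$.

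Next, for any subdivision $a\le t_0<\cdots<t_n\le b$, I would apply \eqref{fluxc} for $g$ on the compact set $K=[-S_{f,g,\|u_0\|_\infty},S_{f,g,\|u_0\|_\infty}]$, which contains the range of $v$ by the maximum principle \eqref{eq:max}, to obtain
$$\sum_{i=1}^{n}|v(t_i)-v(t_{i-1})|^{q}\;\le\;C_2^{-1}\sum_{i=1}^{n}|g'(v(t_i))-g'(v(t_{i-1}))|.$$
This reduces the task to bounding the ordinary total variation of $t\mapsto g'(v(t))=-z(t)/t$ on $(a,b)$. Splitting each increment as
$$g'(v(t_i))-g'(v(t_{i-1}))=\frac{z(t_{i-1})-z(t_i)}{t_i}\;+\;z(t_{i-1})\cdot\frac{t_i-t_{i-1}}{t_{i-1}\,t_i},$$
the first term is non-negative (since $z$ is non-increasing) and, using $1/t_i\le 1/a$, telescopes to at most $|z(t_n)|/a\le \|g'\|_{L^\infty(K)}\,b/a$ via the identity $|z(t)|=t\,g'(v(t))$ that holds when $v>\theta_g$. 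The second term has absolute value bounded by $\|g'\|_{L^\infty(K)}(t_i-t_{i-1})/t_i$, and its sum is at most $\|g'\|_{L^\infty(K)}(b-a)/a$. Combining these two contributions gives $TV(g'\circ v,(a,b))\le 2\|g'\|_{L^\infty(K)}\,b/a$, hence \eqref{TV-bd-1} with $C_g:=2\,C_2^{-1}\|g'\|_{L^\infty(K)}$.

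Part (2) would follow by the symmetric argument: the hypothesis $u(0+,t)<\theta_f$ rules out \descref{thm1-case-2}{Case 2} (where the right trace equals $f^{-1}_+(g(\theta_g))\ge \theta_f$, strict by \eqref{hyp:diff-min}), leaving \descref{thm1-case-1}{Case 1} or \descref{thm1-case-3}{Case 3} in which $R_1(t)=0$ and $u(0+,t)=(f')^{-1}(-z_+(0,t)/t)$ with $z_+(0,\cdot)\ge 0$ non-increasing; then the identical algebra with \eqref{fluxc} for $f$ delivers \eqref{TV-bd-2}. The main obstacle in the plan is the case analysis that pins down the explicit trace formula; once that is in hand, the remaining estimates are elementary and depend only on the monotonicity of $z_\pm$ together with the $L^\infty$ maximum principle.
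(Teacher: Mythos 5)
Your proposal is correct and follows essentially the same route as the paper's proof: the same trace formula $u(0-,t)=(g')^{-1}(-z_-(0-,t)/t)$ from Theorem \ref{constant}, the same use of the non-degeneracy condition \eqref{fluxc} (equivalently the $1/q$-H\"older continuity of $(g')^{-1}$) to reduce to the ordinary variation of $-z_-(0-,t)/t$, and the same two-term splitting whose pieces telescope by monotonicity of $z_-$ and of $1/t$, yielding the bound $C_g\,b/a$. The only cosmetic differences are your explicit case analysis justifying the trace formula and your use of the identity $|z(t)|=t\,g'(v(t))$ in place of the paper's appeal to finite speed of propagation.
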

	\begin{proof}
			Since $u(0-,t)>\theta_{g}$ and $g' \ge 0$ 
			 on $(\theta_g, +\infty)$ the value of the left trace comes from the left. From Theorem \ref{constant}, $u(0-,t)=(g^{\p})^{-1}\left(\frac{-z_-(0-,t)}{t}\right)$ for $t\in(a,b)$ where $t\mapsto z_-(0-,t)$ is non-increasing. Since $g$ satisfies  the non-degeneracy condition \eqref{fluxc}, from Lemma \ref{lemma:Holder} $(g^{\p})^{-1}$ is a  $1/q$-H\"older function  and there exists a constant $H_{g}$ such that 
			\begin{equation*}
				\abs{u(0-,t_1)-u(0-,t_2)}\leq H_g\abs{\frac{z_-(0-,t_1)}{t_1}-\frac{z_-(0-,t_2)}{t_2}}^{\frac{1}{q}}.
			\end{equation*}
			We observe that
			\begin{equation*}
				\abs{\frac{z_-(0-,t_1)}{t_1}-\frac{z_-(0-,t_2)}{t_2}}\leq \abs{z_-(0-,t_1)}\abs{\frac{1}{t_1}-\frac{1}{t_2}}+\frac{1}{t_2}\abs{z_-(0-,t_1)-z_-(0-,t_2)}.
			\end{equation*}
			For any partition $a\leq t_1<t_2<\cdots<t_m\leq b$,
			\begin{align*}
				&\sum\limits_{j=1}^{m-1}\abs{u(0-,t_j)-u(0-,t_{j+1})}^q\\
				&\leq {H_g}^q\sum\limits_{j=1}^{m-1}\left[\abs{z_-(0-,t_j)}\abs{\frac{1}{t_j}-\frac{1}{t_{j+1}}}+\frac{1}{t_{j+1}}\abs{z_-(0-,t_j)-z_-(0-,t_{j+1})}\right]\\
				&\leq {H_g}^q\left[\abs{z_-(0-,b)}\sum\limits_{j=1}^{m-1}\abs{\frac{1}{t_j}-\frac{1}{t_{j+1}}}+\frac{1}{a}\sum\limits_{i=1}^{m-1}\abs{z_-(0-,t_j)-z_-(0-,t_{j+1})}\right]\\
				&\leq {H_g}^q\left[\frac{\abs{z_-(0-,b)}(b-a)}{ab}+\frac{\abs{z_-(0-,a)-z_-(0-,b)}}{a}\right].
			\end{align*}
			Since $\abs{z_-(0-,a)-z_-(0-,b)}\leq \abs{z_-(0-,b)}$ and $b-a\leq b$ we have, 
			\begin{equation*}
				\sum\limits_{j=1}^{m-1}\abs{u(0-,t_j)-u(0-,t_{j+1})}^q\leq 2{H_g}^q\frac{\abs{z_-(0-,b)}}{a}.
			\end{equation*}
			From finite speed of propagation we have $\abs{z(0-,b)}\leq M_gb$ where $K_{f,g,\norm{u_0}_\f}=\sup\{\abs{g^\p(v)};\,\abs{v}\leq \norm{u_0}_\f \}$.
			Hence, we get a new constant $C_g$
			\begin{equation*}
				\sum\limits_{j=1}^{m-1}\abs{u(0-,t_j)-u(0-,t_{j+1})}^q\leq C_g\frac{b}{a}.
			\end{equation*}
			This proves \eqref{TV-bd-1}. Similarly, we can prove the \eqref{TV-bd-2}. 
		\end{proof}
		Better fractional $BV$ estimates  for the traces of the solution  are available for less singular fluxes.
	\begin{lemma}[Fractional $BV$ estimate for traces away from critical values]\label{Prop-partial-2}
	    Let $r>0$ and $f, g$ be satisfying \eqref{fluxc} with exponent $p,q$ respectively. Let $0<a<b<\f$.
	    \begin{enumerate}
	    \item If $u(0-,t)\ge\theta_{g}+r$ and $g''$ vanishes only at $\theta_{g}$ \eqref{f''}, then there exists a constant $C_{g}>0$ independent of $r$ such that the following inequality holds,
	    \begin{eqnarray}\label{TV-bd-1.1}
	        TV(u(0-,\cdot),(a,b))\le\frac{C_{g}}{\min\{g''(v)| v\in[\theta_{g}+r,||u_{0}||_{\f}]\}}\frac{b}{a}.
	    \end{eqnarray}
	    \item If $u(0+,t)\le\theta_{f}+r$ and $f''$ vanishes only at $\theta_{g}$ \eqref{f''}, then there exists a constant $C_{f}>0$ independent of $r$ such that the following inequality holds,
	    \begin{eqnarray}\label{TV-bd-2.1}
	        TV(u(0+,\cdot),(a,b))\le\frac{C_{f}}{\min\{f''(v)| v\in[-||u_{0}||_{\f}, \theta_{f}-r]\}}\frac{b}{a}.
	    \end{eqnarray}
	    \end{enumerate}
	\end{lemma}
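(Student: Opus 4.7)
The statement is a quantitative refinement of Lemma \ref{Prop-partial}: by requiring the trace to stay at distance $r$ from the degenerate point, the H\"older inverse $(g')^{-1}$ used there can be replaced by a Lipschitz inverse, turning $TV^{1/q}$ into $TV$, at the cost of a constant that blows up as $r\to 0$.

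First I would record the trace representation. Since $u(0-,t)\ge \theta_g+r>\theta_g$ a.e.\ on $(a,b)$, the characteristic construction of Theorem \ref{constant} forces the backward characteristic from $(0-,t)$ to reach $t=0$ without being influenced by the flux on the other side, yielding
\begin{equation*}
u(0-,t) \;=\; (g')^{-1}\!\left(\frac{-z_-(0-,t)}{t}\right),
\end{equation*}
with $z_-(0-,\cdot)$ non-increasing in $t$ and $|z_-(0-,b)|\le K_g b$ by finite speed of propagation, where $K_g=\sup\{|g'(v)|:|v|\le\|u_0\|_\f\}$. Next, because $g\in C^2$ is strictly convex and $g''$ vanishes only at $\theta_g$, continuity gives
\begin{equation*}
m_g \;:=\; \min\bigl\{g''(v):v\in[\theta_g+r,\|u_0\|_\f]\bigr\}\;>\;0,
\end{equation*}
so $(g')^{-1}$ is Lipschitz with constant $1/m_g$ on $g'\bigl([\theta_g+r,\|u_0\|_\f]\bigr)$. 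Hence for any two times $t_1,t_2\in(a,b)$,
\begin{equation*}
|u(0-,t_1)-u(0-,t_2)| \;\le\; \frac{1}{m_g}\left|\frac{z_-(0-,t_1)}{t_1}-\frac{z_-(0-,t_2)}{t_2}\right|.
\end{equation*}

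The summation step is then identical to the one in Lemma \ref{Prop-partial}: for a subdivision $a\le t_1<\cdots<t_m\le b$, split each quotient difference as $|z_-(0-,t_j)|\cdot|1/t_j-1/t_{j+1}| + |z_-(0-,t_j)-z_-(0-,t_{j+1})|/t_{j+1}$, telescope using the monotonicity of $1/t$ and of $t\mapsto z_-(0-,t)$, and apply $|z_-(0-,b)|\le K_g b$. This yields
\begin{equation*}
\sum_{j=1}^{m-1}|u(0-,t_j)-u(0-,t_{j+1})| \;\le\; \frac{2K_g}{m_g}\,\frac{b}{a},
\end{equation*}
which is \eqref{TV-bd-1.1} with $C_g:=2K_g$ independent of $r$. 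Part (2) follows by the symmetric argument on the right half-line, using $u(0+,t)=(f')^{-1}(-z_+(0+,t)/t)$ and the Lipschitz constant of $(f')^{-1}$ on the compact interval bounded away from $\theta_f$.

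The only genuinely delicate point is justifying the trace formula in the regime where the solution structure at the interface could in principle involve the singular map $g_-^{-1}f$ (Case 3 of Theorem \ref{constant}); the assumption $u(0-,t)>\theta_g$ rules this out and places us in the purely transported regime from the left, exactly as in the proof of Lemma \ref{Prop-partial}. Beyond that bookkeeping I do not expect any substantial obstacle: the core observation is simply that the hypothesis \eqref{f''} upgrades H\"older control of $(g')^{-1}$ to Lipschitz control on the admissible range, and the rest of the earlier argument is reused verbatim.
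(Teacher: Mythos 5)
Your proposal is correct and follows essentially the same route as the paper: both use the trace representation $u(0-,t)=(g')^{-1}\bigl(-z_-(0-,t)/t\bigr)$ from Theorem \ref{constant}, upgrade the H\"older control of $(g')^{-1}$ to Lipschitz control with constant $\min\{g''(v): v\in[\theta_g+r,\|u_0\|_\f]\}^{-1}$ via the mean value theorem, and then reuse verbatim the telescoping summation from Lemma \ref{Prop-partial}. No substantive difference; your write-up is if anything slightly more explicit about the finite-speed bound $|z_-(0-,b)|\le K_g b$ and the resulting constant $C_g=2K_g$.
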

 Lemma \ref{Prop-partial-2} will be used later with constant $r$ given by either $\theta_{f}-\tilde{\theta}_{f}$ or $\bar{\theta}_{f}-{\theta}_{f}$ as shown in Figure \ref{fig-2}. The fact that $r$ is  a positive constant is crucial to get uniform estimates later.
	\begin{proof}
	    For any $x, y\in\R$ consider
			\begin{eqnarray}\nonumber
	       \abs{x-y} =\abs{g'(g'^{-1}(x)-g'(g'^{-1}(y)}
	       = g''(\xi)\abs{g'^{-1}(x)-g'^{-1}(y)},
			\end{eqnarray}
			where $\xi\in(x, y)$.
			Now for \eqref{TV-bd-1.1}, Theorem \ref{constant} gives, $u(0-,t)=(g^{\p})^{-1}\left(\frac{-z_-(0-,t)}{t}\right)$ for $t\in(a,b)$ where $t\mapsto z_-(0-,t)$ is non-increasing. Thus,
			\begin{eqnarray}\nonumber
			   \abs{u(0-,t_{1})-u(0-,t_{2)}}&=&\abs{g'^{-1}\left(\frac{-z_-(0-,t_{1})}{t_{1}}\right)-g'^{-1}\left(\frac{-z_-(0-,t_{2})}{t_{2}}\right)}\\\nonumber
			   &\le&\min\{g''(v);v\in[\theta_g+r,\norm{u_0}_\f]\}^{-1}\abs{\frac{z_-(0-,t_{1})}{t_{1}}-\frac{z_-(0-,t_{2})}{t_{2}}}.
			\end{eqnarray}
			Now the similar calculation as to prove \eqref{TV-bd-1} gives  \eqref{TV-bd-1.1}.  
		By similar arguments \eqref{TV-bd-2.1} can be proven for $f$.
	\end{proof}
	\subsection{Spatial $BV^s$ estimates for values originating from the interface}\label{spatial}
	Now, far from the interface and restricted flux, when the values of the solution are far from the critical values of $f$ and $g$, a  $BV$ estimate is available.  
	The following inequality are also valid in $BV^{s}$ for free and used later with other $BV^s$ estimates.
		\begin{lemma}[$BV$ and $BV^{s}$ estimates for the solution]\label{lemma:partial-1}
			Let $u$ be an entropy solution and $R_1(t)>0$ for some fixed  $t>0$. Let $0<a<b<R_1(t)$ and $S_{f,g,\norm{u_0}_\f}$ be as in \eqref{eq:max}. Let $r>0$, $f$ satisfies \eqref{fluxc} and $f^{\p\p}$ vanishes only on $\theta_f$ \eqref{f''}. If  $u(x,t)\geq \theta_f+r$ for $a\le x \le b$, then there exists a constant $C_{f,g,\norm{u_0}_\f}>0$ such that
				\begin{equation}\label{TVs-4.2-1}
			TV^{s}(u(\cdot,t),[a,b])\leq \frac{C_{f,g,\norm{u_0}_\f}}{\min\{f^{\p\p}(v);v\in[\theta_{f}+r,S_{f,g,\norm{u_0}_\f}]\}^{\frac{1}{s}}}\left(\frac{t-t_+(b,t)}{t-t_+(a,t)}\right)^{\frac{1}{s}},
				\end{equation}
				for all $0<s\leq 1$. 
				\end{lemma}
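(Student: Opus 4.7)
The plan is to use the explicit representation of the entropy solution from Theorem~\ref{constant}: in the rarefaction region $0\le x<R_1(t)$, $u(x,t)=(f')^{-1}(x/\tau(x))$ with $\tau(x):=t-t_+(x,t)$, which is non-decreasing in $x$ because $t_+(\cdot,t)$ is non-increasing. The rarefaction structure (no crossing of characteristics emanating from the interface) forces $u(\cdot,t)$ to be monotone on $[a,b]\subset[0,R_1(t))$, so that $y(x):=x/\tau(x)=f'(u(x,t))$ is non-decreasing in $x$.

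The first step reduces the fractional seminorm to a single increment. Since $1/s\ge 1$, the map $\alpha\mapsto\alpha^{1/s}$ is super-additive on $[0,\infty)$, so for any subdivision $a=x_0<\cdots<x_n=b$, the monotonicity of $u$ yields
\begin{equation*}
\sum_{i=0}^{n-1}|u(x_{i+1},t)-u(x_i,t)|^{1/s}\le\Bigl(\sum_{i=0}^{n-1}|u(x_{i+1},t)-u(x_i,t)|\Bigr)^{1/s}=(u(b,t)-u(a,t))^{1/s},
\end{equation*}
hence $TV^s(u(\cdot,t),[a,b])\le(u(b,t)-u(a,t))^{1/s}$. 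Next, by hypothesis~\eqref{f''} combined with the maximum principle~\eqref{eq:max}, the quantity $m:=\min\{f''(v):v\in[\theta_f+r,S_{f,g,\norm{u_0}_\f}]\}$ is strictly positive, so the mean value theorem applied to $(f')^{-1}$ gives $u(b,t)-u(a,t)\le(y(b)-y(a))/m$. A direct algebraic computation yields
\begin{equation*}
y(b)-y(a)=\frac{b\tau(a)-a\tau(b)}{\tau(a)\tau(b)}\le\frac{b\tau(b)}{\tau(a)\tau(b)}=\frac{b}{\tau(a)},
\end{equation*}
where I used $b\tau(a)\le b\tau(b)$ (from $\tau(a)\le\tau(b)$) and $a\tau(b)\ge 0$. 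Using~\eqref{eq:max} once more, $y(b)=f'(u(b,t))\le f'(S_{f,g,\norm{u_0}_\f})$, so $b\le f'(S_{f,g,\norm{u_0}_\f})\tau(b)$, and therefore
\begin{equation*}
y(b)-y(a)\le f'(S_{f,g,\norm{u_0}_\f})\,\frac{\tau(b)}{\tau(a)}=f'(S_{f,g,\norm{u_0}_\f})\,\frac{t-t_+(b,t)}{t-t_+(a,t)}.
\end{equation*}
Chaining these three estimates gives the asserted inequality with $C_{f,g,\norm{u_0}_\f}:=f'(S_{f,g,\norm{u_0}_\f})$.

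The only non-routine input is the monotonicity of $u(\cdot,t)$ used in the first step, equivalently the claim that $y(x)=x/\tau(x)$ is non-decreasing on $[a,b]$. This is a structural property of the rarefaction region in the Hopf--Lax representation of Theorem~\ref{constant} and ultimately rests on the Oleinik-type entropy inequality that prevents characteristic crossings in $0\le x<R_1(t)$; once it is in hand, the proof reduces to the elementary computation above.
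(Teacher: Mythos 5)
Your argument hinges on the claim that $u(\cdot,t)$ is monotone on $[a,b]\subset(0,R_1(t))$, equivalently that $y(x)=x/\tau(x)$ with $\tau(x)=t-t_+(x,t)$ is non-decreasing. This is false, and it is the crux of the matter. It is true that $x$ is increasing and that $\tau(x)$ is non-decreasing (since $t_+(\cdot,t)$ is non-increasing), but the quotient of two such functions need not be monotone: if $\tau$ jumps up while $x$ barely moves, $y$ drops. Dynamically, the interface emits waves at different times carrying the values $f_+^{-1}g(u(0-,t_+))$, and these values can oscillate in time; at a later time $t$ the region $0<x<R_1(t)$ therefore contains admissible decreasing shocks interleaved with increasing rarefaction pieces, each side of each shock still satisfying $u=(f')^{-1}(x/\tau(x))$ with its own $t_+$. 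The non-crossing of characteristics that you invoke only prevents the fan from folding over; it does not order the emitted values. Indeed, the construction in Section \ref{sec:opt} produces exactly such a profile: $u(x_i-,1)>u(x_i+,1)$ while $u$ increases on each interval $(x_{i+1},x_i)$, with infinitely many oscillations. If your monotonicity claim were correct, your first step would give $TV(u(\cdot,t),[a,b])\le \mathrm{osc}(u)$, i.e.\ a $BV$ bound near the interface, contradicting the blow-up of total variation from \cite{AS} and Theorem \ref{example}; the very purpose of this lemma is to control oscillations that are \emph{not} monotone.

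The remainder of your computation (mean value theorem for $(f')^{-1}$ using $m=\min f''>0$ on $[\theta_f+r,S_{f,g,\norm{u_0}_\f}]$, and the bounds $y(b)-y(a)\le b/\tau(a)$ and $b\le f'(S_{f,g,\norm{u_0}_\f})\,\tau(b)$) is fine and matches the constants in the statement, but it cannot be fed a single increment $u(b,t)-u(a,t)$. The repair, which is what the paper does, is to keep an arbitrary partition, write
\begin{equation*}
\abs{\frac{x_j}{\tau(x_j)}-\frac{x_{j+1}}{\tau(x_{j+1})}}\le \abs{x_j}\,\abs{\frac{1}{\tau(x_j)}-\frac{1}{\tau(x_{j+1})}}+\frac{1}{\tau(x_{j+1})}\abs{x_j-x_{j+1}},
\end{equation*}
apply the convexity inequality $(A+B)^{1/s}\le 2^{(1-s)/s}\lbr A^{1/s}+B^{1/s}\rbr$, and then use the \emph{separate} monotonicity of $x$ and of $\tau(x)$ together with super-additivity of $y\mapsto y^{1/s}$ to telescope each of the two resulting sums. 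Your super-additivity step and your final constants survive; only the object to which they are applied must change.
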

			The same result holds for the left side of the interface as follows:
				\begin{lemma}[$BV$ and $BV^{s}$ estimate for the solution]\label{lemma:partial-2}
				Let $u$ be an entropy solution and $L_1(t)<0$ for some $t>0$. Let $L_1(t)<a<b<0$ and $S_{f,g,\norm{u_0}_\f}$ be as in \eqref{eq:max}. Let $r>0$, flux $g$ satisfies \eqref{fluxc} and $g^{\p\p}$ vanishes only on $\theta_g$. If $u(x,t)\leq \theta_g-r$ for $a\le x \le b$, then there exists a constant $C_{f,g,\norm{u_0}_\f}>0$ such that
				\begin{equation*}
					TV^{s}(u(\cdot,t),[a,b])\leq \frac{C_{f,g,\norm{u_0}_\f}}{\min\{g^{\p\p}(v);v\in[-S_{f,g,\norm{u_0}_\f}, \theta_{g}-r]\}^{\frac{1}{s}}}\left(\frac{t-t_-(b,t)}{t-t_-(a,t)}\right)^{\frac{1}{s}},
				\end{equation*}
				for all $0<s\le1$.
			\end{lemma}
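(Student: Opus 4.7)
The proof will mirror Lemma~\ref{lemma:partial-1}, with the roles of $f$, $\theta_f$, $t_+$ and the right-going characteristics replaced by the left-side counterparts $g$, $\theta_g$, $t_-$. By Theorem~\ref{constant}, the explicit formula $u(x,t)=(g')^{-1}(x/(t-t_-(x,t)))$ is valid throughout $L_1(t)<x<0$, and in particular on $[a,b]$. Since $u(x,t)\leq\theta_g-r$ on $[a,b]$ and $\norm{u}_\f\leq S_{f,g,\norm{u_0}_\f}$ by the maximum principle~\eqref{eq:max}, the trace values $u(x,t)$ remain in the compact set $[-S_{f,g,\norm{u_0}_\f},\theta_g-r]$, on which hypothesis~\eqref{f''} together with~\eqref{fluxc} guarantees $g''\geq m_{g,r}:=\min\{g''(v):v\in[-S_{f,g,\norm{u_0}_\f},\theta_g-r]\}>0$. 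Consequently $(g')^{-1}$ is Lipschitz on the corresponding range of $g'$ with constant $L=1/m_{g,r}$.

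Fix a partition $a=x_0<x_1<\cdots<x_n=b$ and set $w_i:=x_i/(t-t_-(x_i,t))=g'(u(x_i,t))$. The Lipschitz estimate yields $|u(x_i,t)-u(x_{i-1},t)|\leq L\,|w_i-w_{i-1}|$. The key point is that $x\mapsto g'(u(x,t))$ is monotone on $[a,b]$: since $g$ is strictly convex and $u$ is an entropy solution, characteristics do not cross, which forces the sequence $(w_i)$ to be monotone, and therefore $\sum_i|w_i-w_{i-1}|=|w_n-w_0|$ telescopes. For $s\in(0,1]$ the function $y\mapsto y^{1/s}$ is convex on $[0,\infty)$, so $\sum_i a_i^{1/s}\leq(\sum_i a_i)^{1/s}$ for nonnegative $a_i$; applied to $a_i=|w_i-w_{i-1}|$, this yields
\begin{equation*}
\sum_{i=1}^n|w_i-w_{i-1}|^{1/s}\leq|w_n-w_0|^{1/s}.
\end{equation*}
Taking the supremum over partitions gives $TV^{s}(u(\cdot,t),[a,b])\leq L^{1/s}\,|w_n-w_0|^{1/s}$.

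It remains to control $|w_n-w_0|$. The finite speed of propagation yields $|w_i|=|g'(u(x_i,t))|\leq K_{f,g,\norm{u_0}_\f}:=\sup\{|g'(v)|:|v|\leq S_{f,g,\norm{u_0}_\f}\}$, so $|w_n-w_0|\leq 2K_{f,g,\norm{u_0}_\f}$. Since $t_-(x,t)$ is non-increasing in $x$ by Theorem~\ref{constant}, the ratio $(t-t_-(b,t))/(t-t_-(a,t))\geq 1$, and this uniform bound can be multiplied by the factor $(t-t_-(b,t))/(t-t_-(a,t))$ to match the form stated in the lemma. Combining these steps yields the desired estimate with a constant $C_{f,g,\norm{u_0}_\f}$ depending only on $K_{f,g,\norm{u_0}_\f}$ and the range $S_{f,g,\norm{u_0}_\f}$.

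The main obstacle is not conceptual but bookkeeping: on the left side one has $g'(u)\leq 0$, $x<0$, and the direction of monotonicity of $t_-(x,t)$ given in Theorem~\ref{constant} must be aligned with the orientation of $[a,b]$ so that the ratio in the conclusion is indeed $\geq 1$ and the monotonicity of $(w_i)$ comes out in the right direction. Once these sign conventions are fixed, the argument is the mirror image of the proof of Lemma~\ref{lemma:partial-1}.
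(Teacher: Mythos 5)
Your overall architecture (explicit formula from Theorem~\ref{constant}, Lipschitz bound for $(g')^{-1}$ on $[-S_{f,g,\norm{u_0}_\f},\theta_g-r]$, superadditivity of $y\mapsto y^{1/s}$ to telescope, finite speed of propagation for the constant) matches the paper's proof of the mirror statement, Lemma~\ref{lemma:partial-1}. But there is a genuine gap at the central step: you assert that $w_i=x_i/(t-t_-(x_i,t))=g'(u(x_i,t))$ is monotone in $x_i$ because ``characteristics do not cross.'' Non-crossing of genuine backward characteristics only gives monotonicity of $x\mapsto t_-(x,t)$; it does not give monotonicity of the quotient $x/(t-t_-(x,t))$. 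Indeed $w(x)$ is the product of the increasing negative function $x$ with the positive monotone function $1/(t-t_-(x,t))$, and such a product need not be monotone. Concretely, if the interface trace $u(0-,\tau)$ oscillates in $\tau$ (which it does for generic data, since it is driven by $u_0$ through the singular map), then $u(\cdot,t)$ on $(L_1(t),0)$ exhibits alternating rarefactions and admissible downward jumps, so $g'(u(\cdot,t))$ is not monotone. What survives is only a one-sided (Oleinik-type) structure. Without monotonicity your identity $\sum_i|w_i-w_{i-1}|=|w_n-w_0|$ fails, and with it the whole chain $TV^s\le L^{1/s}|w_n-w_0|^{1/s}$.

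The paper circumvents exactly this point by splitting
\begin{equation*}
\left|\frac{x_j}{t-t_-(x_j,t)}-\frac{x_{j+1}}{t-t_-(x_{j+1},t)}\right|\le |x_j|\left|\frac{1}{t-t_-(x_j,t)}-\frac{1}{t-t_-(x_{j+1},t)}\right|+\frac{1}{t-t_-(x_{j+1},t)}\,|x_j-x_{j+1}|,
\end{equation*}
then applying $(A+B)^{1/s}\le 2^{(1-s)/s}(A^{1/s}+B^{1/s})$ and telescoping each of the two sums separately, since each is built from increments of a genuinely monotone quantity ($x_j$ itself and $1/(t-t_-(x_j,t))$). You should replace your monotonicity claim by this decomposition. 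A secondary issue: you justify $(t-t_-(b,t))/(t-t_-(a,t))\ge 1$ by the stated monotonicity of $t_\pm$ in Theorem~\ref{constant}, but on the left side the backward characteristic through the point farther from the interface leaves the interface earlier, i.e.\ $t_-(a,t)\le t_-(b,t)$ for $a<b<0$, which makes that ratio $\le 1$; so ``multiplying by the ratio to match the stated form'' does not follow as written and needs the same finite-speed argument ($|a|\le K\,(t-t_-(a,t))$) that the paper uses on the right side, with the orientation of the interval handled explicitly.
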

		\begin{proof}
			 Theorem \ref{constant} gives,
			\begin{equation*}
				u(x,t)=(f^{\p})^{-1}\left(\frac{x}{t-t_+(x,t)}\right)\mbox{ for }x\in(0,R_1(t)).
			\end{equation*}
			Fix a partition $a\leq x_1<x_2<\cdots<x_m\leq b$. Then, as in the proof of inequality  \eqref{TV-bd-1.1}, it follows, 
			\begin{align*}
				\sum\limits_{j=1}^{m-1}\abs{u(x_j,t)-u(x_{j+1},t)}^{\frac{1}{s}}&=\sum\limits_{j=1}^{m-1}\abs{(f^{\p})^{-1}\left(\frac{x_j}{t-t_+(x_j,t)}\right)-(f^{\p})^{-1}\left(\frac{x_{j+1}}{t-t_+(x_{j+1},t)}\right)}^{\frac{1}{s}}\\
				&\leq \frac{1}{\min\{f^{\p\p}(v);v\in[\theta_f+r,S_{f,g,\norm{u_0}_\f}]\}^{\frac{1}{s}}}\sum\limits_{j=1}^{m-1}\abs{\frac{x_j}{t-t_+(x_j,t)}-\frac{x_{j+1}}{t-t_+(x_{j+1},t)}}^{\frac{1}{s}}.
			\end{align*}
			We calculate
			\begin{align*}
				\abs{\frac{x_j}{t-t_+(x_j,t)}-\frac{x_{j+1}}{t-t_+(x_{j+1},t)}}&\leq \abs{x_j}\abs{\frac{1}{t-t_+(x_j,t)}-\frac{1}{t-t_+(x_{j+1},t)}}+\frac{1}{t-t_+(x_{j+1},t)}\abs{x_j-x_{j+1}}\\
				&\leq b\abs{\frac{1}{t-t_+(x_j,t)}-\frac{1}{t-t_+(x_{j+1},t)}}+\frac{1}{t-t_+(a,t)}\abs{x_j-x_{j+1}}.
			\end{align*}
			Hence, by the convexity yields, \textcolor{black}{$(a+b)^{\frac{1}{s}}\le 2^{\frac{1-s}{s}}\left(a^{\frac{1}{s}}+b^{\frac{1}{s}}\right) $} \mbox {and we get}
			\begin{align*} 
				&\sum\limits_{j=1}^{m-1}\abs{\frac{x_j}{t-t_+(x_j,t)}-\frac{x_{j+1}}{t-t_+(x_{j+1},t)}}^{\frac{1}{s}}\\
				&\leq\frac{1}{2^{\frac{s-1}{s}}}\left( \sum\limits_{j=1}^{m-1}b^{\frac{1}{s}}\abs{\frac{1}{t-t_+(x_j,t)}-\frac{1}{t-t_+(x_{j+1},t)}}^{\frac{1}{s}}+\sum\limits_{j=1}^{m-1}\frac{1}{(t-t_+(a,t))^{\frac{1}{s}}}\abs{x_j-x_{j+1}}^{\frac{1}{s}}\right)\\
				&\leq\frac{1}{2^{\frac{s-1}{s}}}\left( b^{\frac{1}{s}}\abs{\frac{1}{t-t_+(a,t)}-\frac{1}{t-t_+(b,t)}}^{\frac{1}{s}}+\left(\frac{b-a}{t-t_+(a,t)}\right)^{\frac{1}{s}}\right)\\
				&\leq 2^\frac{1}{s}\left(\frac{b}{t-t_+(a,t)}\right)^{\frac{1}{s}}.
			\end{align*}
			In the last step we have used $b-a\leq b$ and $(t-t_+(b,t))-(t-t_+(a,t))\leq t-t_+(b,t)$. Note that $b\leq K_{f,g,\norm{u_0}_\f} (t-t_+(b,t))$ where $K_{f,g,\norm{u_0}_\f}=\sup\{\abs{f^{\p}};\abs{v}\leq S_{f,g,\norm{u_0}_\f}\}$ where $S_{f,g,\norm{u_0}_\f}$ is defined as in \eqref{eq:max}. 
		\end{proof}
		The following lemma deals with spatial regularity of the entropy solution for the right side of the interface. Inequality \eqref{lemma:partial-1.2} does not used the restricted non-degeneracy condition.
		\begin{lemma}\label{lemma:partial-1.2}
		    Let $u$ be an entropy solution and $R_1(t)>0$ for some fixed  $t>0$. Let $0<a<b<R_1(t)$ and $S_{f,g,\norm{u_0}_\f}$ be as in \eqref{eq:max}. If $f$ only satisfies \eqref{fluxc} with exponent $p$ then we have
		    \begin{equation}\label{TV-4.2-2}
	       TV^{\frac{1}{p}}(u(\cdot,t),[a,b])\leq C_{f,g,\norm{u_0}_\f}\frac{t-t_+(b,t)}{t-t_+(a,t)}.
				\end{equation}
		\end{lemma}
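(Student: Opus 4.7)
The plan is to follow the template of the proof of Lemma \ref{lemma:partial-1}, but with one crucial simplification: the restricted non-degeneracy condition \eqref{f''} is no longer at my disposal, so I cannot invert $f^{\p}$ Lipschitz-continuously and only get a H\"older modulus with exponent $1/p$. However, the target exponent on the variation is exactly $1/p$, so raising the pointwise bounds to the power $p$ will cancel the H\"older defect and leave a purely \emph{linear} estimate on the characteristic slopes. This avoids the convexity inequality used for \eqref{TVs-4.2-1} and explains why only the first power of the ratio $(t-t_+(b,t))/(t-t_+(a,t))$ appears in \eqref{TV-4.2-2}.

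First, I would invoke Theorem \ref{constant} to write
\begin{equation*}
u(x,t)=(f^{\p})^{-1}\!\left(\frac{x}{t-t_+(x,t)}\right)\quad\text{for every }x\in(0,R_1(t)),
\end{equation*}
and recall from Lemma \ref{lemma:Holder} that the condition \eqref{fluxc} with exponent $p$ forces $(f^{\p})^{-1}$ to be $1/p$-H\"older on the relevant compact range (controlled by $S_{f,g,\norm{u_0}_\f}$) with some constant $H_f$ depending only on $f,g,\norm{u_0}_\f$. For any subdivision $a\le x_1<\cdots<x_m\le b$, setting $\xi_j:=x_j/(t-t_+(x_j,t))$, raising the H\"older bound to the $p$-th power yields
\begin{equation*}
\sum_{j=1}^{m-1}|u(x_j,t)-u(x_{j+1},t)|^{p}\le H_f^{\,p}\sum_{j=1}^{m-1}|\xi_j-\xi_{j+1}|.
\end{equation*}

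Next, I would split each slope difference exactly as in the proof of Lemma \ref{lemma:partial-1},
\begin{equation*}
|\xi_j-\xi_{j+1}|\le b\left|\frac{1}{t-t_+(x_j,t)}-\frac{1}{t-t_+(x_{j+1},t)}\right|+\frac{|x_j-x_{j+1}|}{t-t_+(a,t)}.
\end{equation*}
Since each resulting sum is linear (no $1/s$-th power to distribute), the convexity inequality is not needed, and the monotonicity of $x\mapsto t_+(x,t)$ from Theorem \ref{constant} lets both sums telescope:
\begin{equation*}
\sum_{j=1}^{m-1}|\xi_j-\xi_{j+1}|\le b\left|\frac{1}{t-t_+(a,t)}-\frac{1}{t-t_+(b,t)}\right|+\frac{b-a}{t-t_+(a,t)}\le\frac{2b}{t-t_+(a,t)}.
\end{equation*}

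To conclude, finite speed of propagation gives $b\le K_{f,g,\norm{u_0}_\f}(t-t_+(b,t))$ with $K_{f,g,\norm{u_0}_\f}:=\sup\{|f^{\p}(v)|:|v|\le S_{f,g,\norm{u_0}_\f}\}$, producing the ratio $(t-t_+(b,t))/(t-t_+(a,t))$ on the right-hand side of \eqref{TV-4.2-2} after absorbing $2H_f^{\,p}K_{f,g,\norm{u_0}_\f}$ into a single constant $C_{f,g,\norm{u_0}_\f}$ and taking the supremum over all admissible subdivisions. I do not expect any genuine obstacle beyond careful bookkeeping of the constants; the only subtle point worth checking is that the argument of $(f^{\p})^{-1}$ remains in the positive branch of $f^{\p}$, which holds automatically since $x>0$ and $t-t_+(x,t)>0$ on $(0,R_1(t))$, so $u(x,t)>\theta_f$ throughout the interval.
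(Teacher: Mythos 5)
Your proposal is correct and is essentially the argument the paper intends: the paper omits this proof, stating only that it follows ``by a similar argument'' to Lemma \ref{lemma:partial-1}, and your fleshed-out version --- using the $1/p$-H\"older continuity of $(f^{\p})^{-1}$ from Lemma \ref{lemma:Holder}, raising to the $p$-th power so the H\"older exponent cancels, and telescoping the resulting linear sums --- is exactly the mechanism the paper itself uses for the analogous trace estimate in Lemma \ref{Prop-partial}. Your observation that the convexity inequality of Lemma \ref{lemma:partial-1} becomes unnecessary here is an accurate account of why the ratio appears to the first power in \eqref{TV-4.2-2}.
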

	The same result holds for the left side of the interface as follows.
		\begin{lemma}\label{lemma:partial-3}
		Let $u$ be an entropy solution and $L_1(t)<0$ for some $t>0$. Let $L_1(t)<a<b<0$. If $g$ satisfies \eqref{fluxc} with exponent $q$ then we have
		\begin{equation}\label{TVs-4.3-2}
			TV^{\frac{1}{q}}(u(\cdot,t),[a,b])\leq C_{f,g,\norm{u_0}_\f}\frac{t-t_-(b,t)}{t-t_-(a,t)}.
		\end{equation}
	\end{lemma}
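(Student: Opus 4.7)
The plan is to mirror the proof of Lemma \ref{lemma:partial-1.2} on the left-hand side of the interface. The ingredients I expect to use are: (i) the Lax--Oleinik type representation from Theorem \ref{constant}, giving $u(x,t)=(g^{\p})^{-1}\!\bigl(x/(t-t_-(x,t))\bigr)$ for $L_1(t)<x<0$; (ii) the Hölder Lemma \ref{lemma:Holder} from the appendix (already used in the proof of Lemma \ref{Prop-partial}), which under \eqref{fluxc} with exponent $q$ furnishes a constant $H_g$ with $|(g^{\p})^{-1}(y_1)-(g^{\p})^{-1}(y_2)|\le H_g|y_1-y_2|^{1/q}$ on the relevant compact set; (iii) the monotonicity from Theorem \ref{constant}: $t_-(\cdot,t)$ is non-increasing in $x$, hence $t-t_-(\cdot,t)$ is non-decreasing on $[a,b]$; and (iv) the finite-speed-of-propagation bound $|a|\le K_{f,g,\norm{u_0}_\f}(t-t_-(a,t))$ where $K_{f,g,\norm{u_0}_\f}=\sup\{|g^{\p}(v)|\colon |v|\le S_{f,g,\norm{u_0}_\f}\}$.

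Given these, I would fix an arbitrary subdivision $a\le x_1<\cdots<x_m\le b$ of $[a,b]$, raise (ii) to the $q$-th power on the values $x_j/(t-t_-(x_j,t))$, and split the differences by the same triangle inequality used in the proof of Lemma \ref{Prop-partial}:
$$\left|\frac{x_j}{t-t_-(x_j,t)}-\frac{x_{j+1}}{t-t_-(x_{j+1},t)}\right|\le |x_j|\left|\frac{1}{t-t_-(x_j,t)}-\frac{1}{t-t_-(x_{j+1},t)}\right|+\frac{|x_{j+1}-x_j|}{t-t_-(x_{j+1},t)}.$$
Because $a\le x_j\le b<0$ one has $|x_j|\le|a|$, and (iii) gives $1/(t-t_-(x_{j+1},t))\le 1/(t-t_-(a,t))$. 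Both resulting sums telescope: the first to at most $1/(t-t_-(a,t))-1/(t-t_-(b,t))\le 1/(t-t_-(a,t))$, and the second (after pulling out $1/(t-t_-(a,t))$) to $b-a\le|a|$, the last inequality holding since $b\le0$. Combining,
$$\sum_{j=1}^{m-1}|u(x_j,t)-u(x_{j+1},t)|^q\le\frac{2H_g^q\,|a|}{t-t_-(a,t)}.$$

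To conclude, I would invoke (iv) together with $t-t_-(b,t)\ge t-t_-(a,t)$ to get $|a|\le K_{f,g,\norm{u_0}_\f}(t-t_-(b,t))$, which turns the previous bound into the claimed estimate with $C_{f,g,\norm{u_0}_\f}=2H_g^q\,K_{f,g,\norm{u_0}_\f}$. Passing to the supremum over subdivisions delivers $TV^{1/q}(u(\cdot,t),[a,b])\le C_{f,g,\norm{u_0}_\f}\,(t-t_-(b,t))/(t-t_-(a,t))$. I do not anticipate a real obstacle; the only point requiring care is the sign bookkeeping on the negative axis, specifically the inequalities $|x_j|\le|a|$ and $b-a\le|a|$, which replace the analogous $|x_j|\le b$ and $b-a\le b$ used on the right side, and the absence of the restricted non-degeneracy \eqref{f''} is precisely what forces the exponent $1/q$ (instead of $1$) in the resulting fractional variation.
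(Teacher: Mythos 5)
Your proof is correct and follows exactly the route the paper intends: the paper omits this proof, stating it is obtained ``by a similar argument'' to Lemma \ref{lemma:partial-1}, and your argument is precisely that adaptation --- Lax--Oleinik representation $u(x,t)=(g^{\p})^{-1}\bigl(x/(t-t_-(x,t))\bigr)$, the H\"older bound from Lemma \ref{lemma:Holder} raised to the power $q$ so the sums telescope linearly, the same triangle-inequality split, and the finite-speed bound, with the sign bookkeeping on the negative axis handled correctly. The only (harmless) cosmetic difference is that, because the exponent $q$ exactly cancels the $1/q$-H\"older exponent, you can telescope directly without the convexity inequality $(a+b)^{1/s}\le 2^{\frac{1-s}{s}}\bigl(a^{1/s}+b^{1/s}\bigr)$ that the written proof of Lemma \ref{lemma:partial-1} needs for general $s$.
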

		By a similar argument as previous Lemma \ref{lemma:partial-1}  the inequality \eqref{TV-4.2-2} of Lemma \ref{lemma:partial-1.2} can be proven, so it is not written here. 
	
	}
	\subsection{Smoothing effect for restricted nonlinear fluxes}\label{proofs}
	\textcolor{black}{Now we are ready to prove Theorem  \ref{fnoteqg}.} To end this, an \textcolor{black}{arbitrary} partition  is fixed and divided in several parts. Some are far from the interface and the generalized variation is estimated with a regularizing effect for a scalar conservation laws without a boundary. Some others are  near the interface where the Lax-Oleinik formula for the solution \cite{Kyoto} is used with previous lemmas.
	\begin{proof}[Proof of Theorem \ref{fnoteqg}:]
		Since $f(\theta_{f})\not=g(\theta_{g})$,  without loss of generality assume that $f(\theta_{f})<g(\theta_{g})$ as in Figure \ref{fig-2}. It is enough to consider the following two cases, the other cases are similar.\\
		Case(i): $L_{1}(t)=0$ and $R_{1}(t)\ge0$.
		
		 Consider an arbitrary partition $\{-M=x_{-n}<\cdots<x_{-1}<x_{0}\le0<x_{1}<\cdots<x_{l}\le R_{2}(t)<x_{l+1}<\cdots<x_{m}\le R_{1}(t)<x_{m+1}<\cdots<x_{n}=M \}$. Then,
		\begin{multline*}
			\sum_{i=-n}^{n-1}|u(x_{i},t)-u(x_{i+1},t)|^{1/s}=\sum_{i=-n}^{-1}|u(x_{i},t)-u(x_{i+1},t)|^{1/s}+\sum_{i=m+1}^{n-1}|u(x_{i},t)-u(x_{i+1},t)|^{1/s}\\
			+\sum_{i=1}^{l-1}|u(x_{i},t)-u(x_{i+1},t)|^{1/s}+\sum_{i=l+1}^{m-1}|u(x_{i},t)-u(x_{i+1},t)|^{1/s}\\
			+|u(x_{0},t)-u(x_{1},t)|^{1/s}+|u(x_{l},t)-u(x_{l+1},t)|^{1/s}+|u(x_{m},t)-u(x_{m+1},t)|^{1/s}.
		\end{multline*}
		From Theorem \ref{constant}, solution $u$ is constant between $R_{2}(t)$ to $R_{1}(t)$ which means variation is zero for this interval. Now from the Lax-Oleinik formula in Theorem \ref{constant} and bounding the last three terms yield,
		\begin{multline*}
			\sum_{i=-n}^{n}|u(x_{i},t)-u(x_{i+1},t)|^{1/s}\le\underbrace{\sum_{i=-n}^{-1}|u(x_{i},t)-u(x_{i+1},t)|^{1/s}}_\text{I}+\underbrace{\sum_{i=m+1}^{n-1}|u(x_{i},t)-u(x_{i+1},t)|^{1/s}}_\text{III}\\
			+\underbrace{\sum_{i=1}^{l-1}|f^{-1}_{+}g(u_{0}(z_{+}(x_{i},t)))-f^{-1}_{+}g(u_{0}(z_{+}(x_{i+1},t)))|^{1/s}}_\text{II}+3(2||u_{0}||_{\f})^{1/s}.
		\end{multline*}
		\textcolor{black}{	Now we wish to estimate the terms \RN{1}, \RN{2}, and \RN{3}.  The simplest terms  \RN{1},  \RN{3} are estimated as in \cite{junca1,CJLO}.  First taking the \RN{1} into the account.  Since $f$ and $g$ are satisfying the flux non-degeneracy condition \eqref{fluxc}, by Lemma \ref{lemma:Holder}, the maps $u\mapsto (g')^{-1}(u)$ and $u\mapsto (f')^{-1}(u)$ are H\"older continuous with exponents $q^{-1}$ and $p^{-1}$ respectively. 
			From Theorem \ref{constant}, 
			\begin{eqnarray*}
			 u(x, t)=(g')^{-1}\left(\frac{x-z_{-}(x, t)}{t}\right), &\mbox{ for } x<0, 
			\end{eqnarray*}
			  then for $-M\leq x_i<x_{i+1}\leq 0$, from Lemma \ref{lemma:Holder}
			\begin{eqnarray*}
				|u(x_{i},t)-u(x_{i+1},t)|^{q}&=&\left|(g')^{-1}\left(\frac{x_{i}-z_{-}(x_{i},t)}{t}\right)-(g')^{-1}\bigg(\frac{x_{i+1}-z_{-}(x_{i+1},t)}{t}\bigg) \right|^{q}\\
				&\le&\bigg(C_{2}^{-q^{-1}}\bigg|\frac{x_{i}-z_{-}(x_{i},t)}{t}-\frac{x_{i+1}-z_{-}(x_{i+1},t)}{t}\bigg|^{q^{-1}} \bigg)^{q},
			\end{eqnarray*}
			using triangle inequality we obtain,
			\begin{equation*}
				|u(x_{i},t)-u(x_{i+1},t)|^{q}\le C_{2}^{-1}\bigg|\frac{x_{i}-x_{i+1}}{t}\bigg|+C_{2}^{-1}\bigg|\frac{z_{-}(x_{i},t)-z_{-}(x_{i+1},t)}{t}\bigg|.
			\end{equation*}
			Since $|x_i|, |x_{i+1}|\le M$ and $x=z_{-}(x, t)+g'(u(x,0))t$ hence, we get 
			\begin{equation}\label{calc-1}
				TV^{q^{-1}}u(\sigma\cap[-M,0])\le \frac{4M}{C_{2} t}+\frac{1}{C_2}\sup\left\{\abs{g^{\p}(v)};\,\abs{v}\leq \norm{u_0}_{L^{\f}(\R)}\right\}.
			\end{equation}
			In similar fashion, for the term $\RN{3}$ we have,  
			\begin{equation}\label{estimation-III}
				TV^{p^{-1}}u(\sigma\cap[R_1(t),M])\le \frac{4M}{C_{1} t}+\frac{1}{C_1}\sup\left\{\abs{f^{\p}(v)};\,\abs{v}\leq \norm{u_0}_{L^{\f}(\R)}\right\}.
			\end{equation}
			Now we will estimate the $\RN{2}$ term. From the definition of $s$, $s\le1/p$ and $s\le1/(q+1)$. 
			Rest of the proof for this case is divided into two sub-cases.}
		{\begin{enumerate}
				\item Consider the situation when $t_+^{{min}}(t)=\inf\{t_+(x,t);x\in(0,R_1(t))\}\geq t/2$. The fact $t_{+}^{min}>t/2>0$ implies that the characteristics reaching the left side of the interface at $(0-,t_+)$  has a  positive speed, hence $u(0-,t_{+}(x,t))>\theta_{g}$ for all $x\in(0,R_{1}(t))$ (Figure \ref{fig-2}). Therefore, the inequality \eqref{TV-bd-1} of Lemma \ref{Prop-partial} gives $TV^{\frac{1}{q}}(u(0-,\cdot)(t_+^{min},t))\leq C_g  \frac{t}{t/2} = 2 C_g$. Since $s\le\frac{1}{q+1}<\frac{1}{q}$, Lemma \ref{TVs_norm} yields $TV^{s}(u(0-,\cdot)(t_+^{min},t))\leq \mathcal{O}(1)$ and then $\RN{2}\leq \mathcal{O}(1) $.
				
				\item Next focus on the sub-case when $t_+^{{min}}(t)=\inf\{t_+(x,t);x\in(0,R_1(t))\}< t/2$. As previous subcase we already have $TV^{s}(u(0-,\cdot)(t/2,t))\leq 2C_g$. Let $j_0>0$ such that $t_+(x_j,t)\geq t/2$ for $0<j\leq j_0 $ and $t_+(x_j,t)<t/2$ for $j_0<j\leq l-1$. Since $u(x_j,t)=u(0+,t_+(x_j,t))=f_{+}^{-1}g(u(0-,t_{+}(x_{j},t))$ for $0<j<l-1$, from Lemma \ref{lipschitz}, $f_{+}^{-1}g$ is Lipschitz function, hence
				\begin{equation*}
					\sum\limits_{j=1}^{j_0}\abs{u(x_j,t)-u(x_{j-1},t)}^{\frac{1}{s}}\leq \mathcal{O}(1).
				\end{equation*}
				Let $\bar{\theta}_f> \theta_f$ be such that $f(\bar{\theta}_f)=g(\theta_g)$ as shown in Figure \ref{fig-2}. Then by RH condition \eqref{RH} observe that $u(x_j,t)\geq \bar{\theta}_f$. From the inequality \eqref{TVs-4.2-1} of Lemma \ref{lemma:partial-1} we get
				\begin{equation*}
					\sum\limits_{j=j_0+1}^{l-2}\abs{u(x_j,t)-u(x_{j+1},t)}^{\frac{1}{s}}\leq \mathcal{O}(1).
				\end{equation*}
		 
				Subsequently, we get
				\begin{equation}\label{est-II-1}
					\RN{2}\leq \mathcal{O}(1).
				\end{equation}
			\end{enumerate}
		}
		Hence combining the estimates on \RN{1}, \RN{2} and \RN{3} for constant $C_{f,g,\norm{u_0}_\f}>0$ we have 
			\begin{equation*}
				\sum_{i=-n}^{n}|u(x_{i},t)-u(x_{i+1},t)|^{1/s}\le C_{f,g,\norm{u_0}_\f} \left(  1 + \frac{1}{t} \right) .
			\end{equation*}
			\descitem{Case (ii):}{case2-f-n-g}  $R_{1}(t)=0$, $L_{1}(t)<0$. Unlike previous case, this case is not as good due to the fact that $g_-^{-1}f$ is only H\"older continuous  and not Lipschitz. 
			Let us consider the partition $\sigma=\{-M=x_{-n}<\cdots<x_{m}\le L_{2}(t)=L_{1}(t)<x_{m+1}<\cdots<x_{0}\le R_{2}(t)=R_{1}(t)=0<x_{1}<\cdots\le x_{n}=M \}$ and then
			\begin{eqnarray*}
				\sum_{i=-n}^{n}|u(x_{i},t)-u(x_{i+1},t)|^{1/s}&=&\sum_{i=-n}^{m-1}|u(x_{i},t)-u(x_{i+1},t)|^{1/s}+\sum_{i=1}^{n}|u(x_{i},t)-u(x_{i+1},t)|^{1/s}\\
				&+&\sum_{i=m+1}^{-1}|u(x_{i},t)-u(x_{i+1},t)|^{1/s}+|u(x_{0},t)-u(x_{1},t)|^{1/s}\\
				&+&|u(x_{m},t)-u(x_{m+1},t)|^{1/s}.
			\end{eqnarray*}
			From Theorem \ref{constant} we get, 
			\begin{eqnarray*}
				\sum_{i=-\f}^{\f}|u(x_{i},t)-u(x_{i+1},t)|^{1/s}&=&\underbrace{\sum_{i=-n}^{m-1}|u(x_{i},t)-u(x_{i+1},t)|^{1/s}}_\text{I}+2(2||u_{0}||_{\f})^{1/s}\\
				&+&\underbrace{\sum_{i=m+1}^{-1}|g^{-1}_{-}(f(u_{0}(z_{-}(x_{i},t))))-g^{-1}_{-}(f(u_{0}(z_{-}(x_{i+1},t))))|^{1/s}}_\text{II}\\
				&+&\underbrace{\sum_{i=1}^{n}|u(x_{i},t)-u(x_{i+1},t)|^{1/s}}_\text{III}.\\
			\end{eqnarray*}
			Similarly to \descref{case1-f-n-g}{Case (i)} we bound \RN{1}, \RN{3} as in \eqref{calc-1}, \eqref{estimation-III} to get
			\begin{equation*}
				\RN{1}+ \RN{3}\le \frac{C_{f,g,||u_{0}||_{\f}}M}{t}.
		\end{equation*}
		 Now the term \RN{2} consider as previous term \RN{2} and divide into two sub-cases.
			\begin{enumerate}
				\item We first consider the situation when $t_-^{{min}}(t)=\inf\{t_-(x,t);x\in(L_1(t),0)\}\geq t/2$. The RH condition \eqref{RH} implies that $u(0+,\cdot)\le \tilde{\theta}_{f}$, see Figure \ref{fig-2}, the inequality \eqref{TV-bd-2.1} of Lemma \ref{Prop-partial-2} gives
			\begin{equation}	TV(u(0+,\cdot)(t_-^{min},t))\leq C_{f,g,\norm{u_0}_\f}.
			\end{equation}
			Note that $g_-^{-1}\circ f$ is H\"older continuous function with exponent $ \frac{1}{q+1}$. Hence we have
				\begin{equation}
					\RN{2}=\sum\limits_{j=m+1}^{-1}\abs{u(x_j,t)-u(x_{j+1},t)}^{\frac{1}{q+1}}\leq C_{f,g,\norm{u_0}_\f}.
				\end{equation} 
				
				\item Next we focus on the sub-case when $t_-^{{min}}(t)=\inf\{t_-(x,t);x\in(L_1(t),0)\}< t/2$. Let $j_0<0$ such that $t_+(x_j,t)\geq t/2$ for $ j_0\leq j<0 $ and $t_+(x_j,t)<t/2$ for ${m+1}<j<j_0$. In previous sub-case we have 
				\begin{equation}
					\sum\limits_{j=j_0}^{-1}\abs{u(x_j,t)-u(x_{j+1},t)}^{\frac{1}{q+1}}\leq C_{f,g,\norm{u_0}_\f}.
				\end{equation}
				Note that for $m+1<j<j_0$, $u(x_j,t)=u(0-,t_-(x_j,t))\leq \theta_g$. From the inequality \eqref{TVs-4.3-2} of Lemma \ref{lemma:partial-3} we have 
				\begin{equation}
					\sum\limits_{j=m+1}^{j_0-1}\abs{u(x_j,t)-u(x_{j+1},t)}^{\frac{1}{q}}\leq C_{f,g,\norm{u_0}_\f}.
				\end{equation}
				Subsequently, we get
				\begin{equation}\label{est-II-2}
					\RN{2}\leq C_{f,g,\norm{u_0}_\f}+\norm{2u_0}_{\f}^{\frac{1}{q+1}}.
				\end{equation}
			\end{enumerate} 	
			Hence, from the estimates on \RN{1}, \RN{2} and \RN{3} we get
			\begin{equation}\label{eq:estimate}
				\sum_{i=-n}^{n}|u(x_{i},t)-u(x_{i+1},t)|^{\frac{1}{q+1}}\leq C_{f,g,\norm{u_0}_\f}+3(2||u_{0}||_{\f})^{\frac{1}{q+1}}+\frac{C_{f,g}M}{t}.
			\end{equation}	
		
	\end{proof} 
	\subsection{Generalization for $BV^s$ initial data}
	Now we are able to prove  Theorem \ref{th:f-not-g}. For this, again we divide the domain in several parts. Here initial data belongs to $BV^{s}$. If $s$ is very small then far from the interface estimates comes from the regularizing effect. If $s$ is near to $1$ then outside interface initial data regularity propagates. For the estimate on the solution near interface again we use Lax-Oleinik formula from \cite{Kyoto}. 
	\begin{proof}[Proof of Theorem \ref{th:f-not-g}]
		Since $f(\theta_{f})\not=g(\theta_{g})$, without loss of generality we assume that $f(\theta_{f})<g(\theta_{g})$, see Figure \ref{fig-2} because other case can be done in a similar way. Hence, from Theorem \ref{constant} we have $L_{2}(t)=L_{1}(t)$ then it is enough to consider the following two cases.
		\begin{description}
			\descitem{Case (i):}{case1-f-n-g} If $L_{1}(t)=0$ and $R_{1}(t)\ge 0$.\\
			Consider the partition $\sigma=\{-M=x_{-n}\le\cdots<x_{-1}<x_{0}\le0<x_{1}<\cdots<x_{l}\le R_{2}(t)<x_{l+1}<\cdots<x_{m}\le R_{1}(t)<x_{m+1}<\cdots\le x_{n}=M \}$ and 
			$$s_{1}=\min\{\gamma , \max\{\nu, s\}\}\in(0,1).$$
			Then
			\begin{eqnarray*}
				&&\sum_{i=-n}^{n-1}|u(x_{i},t)-u(x_{i+1},t)|^{1/s_{1}}\\
				&=&\sum_{i=-n}^{-1}|u(x_{i},t)-u(x_{i+1},t)|^{1/s_{1}}+\sum_{i=m+1}^{n-1}|u(x_{i},t)-u(x_{i+1},t)|^{1/s_{1}}\\
				&+&\sum_{i=1}^{l-1}|u(x_{i},t)-u(x_{i+1},t)|^{1/s_{1}}+\sum_{i=l+1}^{m-1}|u(x_{i},t)-u(x_{i+1},t)|^{1/s_{1}}\\
				&+&|u(x_{0},t)-u(x_{1},t)|^{1/s_{1}}+|u(x_{l},t)-u(x_{l+1},t)|^{1/s_{1}}\\
				&+&|u(x_{m},t)-u(x_{m+1},t)|^{1/s_{1}}.
			\end{eqnarray*}
			From Theorem \ref{constant}, the entropy solution is constant between $R_{2}(t)$ and $R_{1}(t)$ which means variation is zero for this interval. Hence, 
			\begin{align*}
				\sum_{i=-n}^{n-1}|u(x_{i},t)-u(x_{i+1},t)|^{1/s_{1}}&=\underbrace{\sum_{i=-n}^{-1}|u(x_{i},t)-u(x_{i+1},t)|^{1/s_{1}}}_\text{I}+3(2||u_{0}||_{\f})^{1/s_{1}}\\
				&+\underbrace{\sum_{i=1}^{l-1}|f^{-1}_{+}g(u_{0}(z_{+}(x_{i},t)))-f^{-1}_{+}g(u_{0}(z_{+}(x_{i+1},t)))|^{1/s_{1}}}_\text{II}\\
				&+\underbrace{\sum_{i=m+1}^{n-1}|u(x_{i},t)-u(x_{i+1},t)|^{1/s_{1}}}_\text{III}.\\
			\end{align*}
			From the choice of $s_1$, we get $s_1\leq \max\{s,1/q\}$. If $1/q>s$, then $s_1<1/q$. By a similar argument as in \eqref{calc-1} we have 
			\begin{equation*}
				\sum_{i=-n}^{-1}|u(x_{i},t)-u(x_{i+1},t)|^{1/s_{1}}\leq \frac{4M}{C_{2} t}+\frac{1}{C_2}\sup\left\{\abs{g^{\p}(v)};\,\abs{v}\leq \norm{u}_{L^{\f}(\R\times[0,T])}\right\}.
			\end{equation*}
			If $s>1/q$ then $s_1<s$ and we use the regularity of initial data to estimate $\RN{1}$ so from Lemma \ref{TVs_norm} $\RN{1}\le D\cdot TV^{s}(u_{0})$. Combining both the estimate we can write 
			\begin{equation}\label{est-I}
				\RN{1}\leq TV^{s}(u_{0})+\frac{4M}{C_{2} t}+\frac{1}{C_2}\sup\left\{\abs{g^{\p}(v)};\,\abs{v}\leq \norm{u}_{L^{\f}(\R\times[0,T])}\right\}.
			\end{equation}
			Similarly we have  
			\begin{equation}\label{est-III}
				\RN{3}\leq TV^{s}(u_{0})+\frac{4M}{C_{1} t}+\frac{1}{C_1}\sup\left\{\abs{f^{\p}(v)};\,\abs{v}\leq \norm{u}_{L^{\f}(\R\times[0,T])}\right\}.
			\end{equation}
			From Lemma \ref{lipschitz} we know that $f_{+}^{-1}g(\cdot)$ is a Lipschitz continuous. Hence, the term \RN{2} can be estimated as
			\begin{eqnarray*}
				\RN{2}&=&\sum_{i=1}^{l-1}|f^{-1}_{+}g(u_{0}(z_{+}(x_{i},t)))-f^{-1}_{+}g(u_{0}(z_{+}(x_{i+1},t)))|^{1/s_{1}}\\
				&\le&C\cdot \sum_{i=1}^{l-1}|u_{0}(z_{+}(x_{i},t))-u_{0}(z_{+}(x_{i+1},t))|^{1/s_{1}}.
			\end{eqnarray*}
			If $s>1/q$ then we have $s_1<s$, from Lemma \ref{TVs_norm}, $\RN{2}\leq D\cdot TV^{s}(u_0)$. For the case $s<1/q$ we do not know whether $s_1<s$ holds or not but we surely have $s_1<1/q$. For this case we use the regularizing  effect for solutions of conservation laws due to non-degeneracy of $g$ \cite{junca1}. For term $\RN{2}$ we note that the estimate \eqref{est-II-1} in proof of Theorem \ref{fnoteqg}. Hence combining the estimates on \RN{1}, \RN{2} and \RN{3} we get 
			Hence, from the estimates on \RN{1}, \RN{2} and \RN{3} we get 
			\begin{equation*}
				\sum_{i=-n}^{n-1}|u(x_{i},t)-u(x_{i+1},t)|^{1/s_{1}}\le D\cdot TV^{s}(u_{0})+3(2||u_{0}||_{\f})^{1/s_{1}}+\frac{C_{f,g}M}{t}.
			\end{equation*}
			\textbf{Case (ii)}: $R_{1}(t)=0$, $L_{1}(t)<0$. \\
			This case can be handled in a similar fashion as in previous case.
			
			Only difference is the estimation of $\RN{2}$ which can be done same as in \eqref{est-II-2}.
		\end{description}
		Hence, we have proven that $u(\cdot,t)\in BV^{s_1}(-M,M)$. To show that $u(\cdot,t)\in BV^{s_1}(\R)$, we consider a partition $-\f<x_{-n}<\cdots <x_{n}<\f$ which is not necessarily contained in $[-M,M]$. We can choose $M=t\sup\{\abs{f^{\p}(v)},\abs{g^\p(v)};\abs{v}\leq \norm{u_0}_\f\}$. Suppose $\abs{x_j}\leq M$ for $-m_1\leq j\leq m_2$ for some $0<m_1,m_2\leq n$. From \eqref{eq:estimate} we get
		\begin{equation*}
			\sum_{i=-m_1}^{m_2}|u(x_{i},t)-u(x_{i+1},t)|^{\frac{1}{q+1}}\leq C_{f,g}+2(2||u_{0}||_{\f})^{\frac{1}{q+1}}.
		\end{equation*}	
		From the choice of $M$, we can see that $R_1(t)\leq M, L_1(t)\geq-M$. Hence for $i\leq -m_1$, $u(x_i,t)=u_0(z_-(x_i,t))$ and for $i\geq m_2 $, $u(x_i,t)=u_0(z_+(x_i,t))$. Subsequently,
		\begin{equation*}
			\sum_{i=-n}^{-m_1-2}|u(x_{i},t)-u(x_{i+1},t)|^{\frac{1}{s}}+\sum_{i=m_2+1}^{n-1}|u(x_{i},t)-u(x_{i+1},t)|^{\frac{1}{s}}\leq TV^s(u_0).
		\end{equation*}
		Therefore, we obtain
		\begin{equation*}
			\sum_{i=-n}^{n-1}|u(x_{i},t)-u(x_{i+1},t)|^{\frac{1}{q+1}}\leq C_{f,g,\norm{u_0}_\f}+4(2||u_{0}||_{\f})^{\frac{1}{q+1}}+TV^s(u_0).
		\end{equation*}	
		This completes the proof of Theorem \ref{th:f-not-g}.
	\end{proof}
	\subsection{Non restricted fluxes}
	Now we assume the weaker non-degeneracy on flux  the estimates on solution near interface so Lemma \ref{Prop-partial-2}, \ref{spatial} and \ref{lemma:partial-1} can not be used here. So the regularity is weaker here. 
	{\begin{proof}[Proof of Theorem \ref{theorem:gen:reg}:]
			Fix a time $t>0$. We only show for the case when $R_1(t)>0$. Note that in this case $L_1(t)=L_2(t)=0$. Suppose $t_0=\lim\limits_{x\rr R_1(t)-}t_+(x,t)$. First consider $t_0>t/2$. From Lemma \ref{Prop-partial}, we have
			\begin{equation*}
				TV^{\frac{1}{q}}(u(0-,\cdot),(t_0,t))\leq \frac{C_{g}t}{t_0}\leq 2C_g.
			\end{equation*}
			Since $u\mapsto f_+^{-1}(g(u))$ is H\"older continuous with exponent $\frac{1}{p+1}$, we get
			\begin{equation*}
				\abs{u(0+,t_1)-u(0+,t_2)}\leq C_{f,g,\norm{u_0}_\f}\abs{u(0-,t_1)-u(0-,t_2)}^{\frac{1}{p+1}}.
			\end{equation*}
			Subsequently, we have
			\begin{equation*}
				TV^{s}(u(0+,\cdot),(t_0,t))\leq C_{f,g,\norm{u_0}_\f}\mbox{ where }s=\frac{1}{q(p+1)}.
			\end{equation*}
			Note that for $x\in(0,R_1(t))$ we have $u(x,t)=u(0+,t_+(x,t))$. Therefore, 
			\begin{equation}\label{1st-est}
				TV^{s}(u(\cdot,t),(0,R_1(t)))\leq C_{f,g,\norm{u_0}_\f}.
			\end{equation}
			For $x>R_1(t)$ we have $u(x,t)=(f^{\p})^{-1}\left(\frac{x-z_+(x,t)}{t}\right)$ for a non-decreasing $x\mapsto z_+(x,t)$. By using flux condition \eqref{fluxc} of $f$, we obtain 
			\begin{equation}
				TV^{\frac{1}{p}}(u(\cdot,t),(R_1(t),M))\leq \frac{C_{f,g,\norm{u_0}_\f}M}{t}.
			\end{equation} 
			Hence,
			\begin{align*}
				TV^s(u(\cdot,t),(0,M))&\leq TV^s(u(\cdot,t),(0,R_1(t)))+\norm{2u}_{L^\f(\R)}^{\frac{1}{s}}+TV^s(u(\cdot,t),(R_1(t),M))\\
				&\leq C_{f,g,\norm{u_0}_\f}+\norm{2u_0}_{L^\f(\R)}^{\frac{1}{s}}+TV^{\frac{1}{p}}(u(\cdot,t),(R_1(t),M))\\
				&\leq C_{f,g,\norm{u_0}_\f}+\norm{2u_0}_{L^\f(\R)}^{\frac{1}{s}}+\frac{C_{f,g,\norm{u_0}_\f}M}{t}.
			\end{align*}
			Next we consider the case when $t_0<t/2$. Let $x_0=\sup\{x;\,t_+(x,t)\geq t/2\}$. By Lemma \ref{lemma:partial-1} we have
			\begin{equation}
				TV^{\frac{1}{p}}(u(\cdot,t);(x_0,M))\leq C_{f,g,\norm{u_0}_\f}+\frac{C_{f,g,\norm{u_0}_\f}M}{t}.
			\end{equation}
			Similar to \eqref{1st-est} we get
			\begin{equation*}
				TV^{s}(u(\cdot,t),(0,x_0))\leq C_{f,g,\norm{u_0}_\f}\mbox{ with }s=\frac{1}{q(p+1)}.
			\end{equation*}
			Subsequently, we obtain
			\begin{equation*}
				TV^s(u(\cdot,t),(0,M))\leq C_{f,g,\norm{u_0}_\f}+\norm{2u_0}_{L^\f(\R)}^{\frac{1}{s}}+\frac{C_{f,g,\norm{u_0}_\f}M}{t}.
			\end{equation*}
			Note that for $x<0$ we have $u(x,t)=(g^\p)^{-1}\left(\frac{x-z_-(x,t)}{t}\right)$. Then by using flux condition \eqref{fluxc} we can show that
			\begin{equation}
				TV^{\frac{1}{q}}(u(\cdot,t);(-M,0))\leq \frac{C_{f,g,\norm{u_0}_\f}M}{t}.
			\end{equation}
			The other case when $L_1(t)<0$ follows from a similar argument. This completes the proof of Theorem \ref{theorem:gen:reg}.
		\end{proof}
	}
	\subsection{Propagation of the initial regularity outside the interface}\label{2.4}
	The regularity of entropy solutions outside the interface is better than at the interface. It is proven in this section. 
	\begin{proof}[Proof of Theorem \ref{eps}]
		We consider the partition $\e\le x_{0}<x_{1}<\cdots<x_{l}\le R_{1}(t)\le x_{l+1}<\cdots$. Then
		\begin{eqnarray*}
			\sum_{i=0}^{\f}|u(x_{i},t)-u(x_{i+1},t)|^{1/s}&=&\sum_{i=0}^{l-1}|u(x_{i},t)-u(x_{i+1},t)|^{1/s}+\sum_{i=l}^{\f}|u(x_{i},t)-u(x_{i+1},t)|^{1/s}.
		\end{eqnarray*}
		Now from Theorem \ref{constant} we get,
		\begin{eqnarray*}
			\sum_{i=0}^{\f}|u(x_{i},t)-u(x_{i+1},t)|^{1/s}&\le&\sum_{i=0}^{l-1}\bigg|(f')^{-1}\bigg(\frac{x_{i}}{t-t_{+}(x_{i},t)}\bigg)-(f')^{-1}\bigg(\frac{x_{i+1}}{t-t_{+}(x_{i+1},t)}\bigg) \bigg|^{1/s}\\
			&+&\big|u(x_{l},t)-u(x_{l+1},t)\big|^{1/s}+\sum_{i=l+1}^{\f}\big|u_{0}(y(x_{i},t))-u_{0}(y(x_{i+1},t)) \big|^{1/s}.
		\end{eqnarray*}
		Since $t_{+}(x,t)$ is monotone function and has bound and infimum of $t-t_{+}(x, t)$ positive. Hence, we get
		\begin{equation*}
			\frac{\e}{T}\le\frac{x}{t-t_{+}(x, t)}\le\frac{M}{h(\e,T)},
		\end{equation*}
		where $h(\e,T)=\inf\{t-t_{+}(x,t): \e\le x\le R_{1}(t), 0<t\le T \}$, which also implies that  $(f')^{-1}$ is Lipschitz continuous function on interval $\left[\frac{\e}{T}, \frac{M}{h(\e,T)}\right]$. Then,
		\begin{eqnarray*}
			\sum_{i=0}^{\f}|u(x_{i},t)-u(x_{i+1},t)|^{1/s}&\le&C(\e,t)\sum_{i=0}^{l-1}\bigg|\frac{x_{i}}{t-t_{+}(x_{i},t)}-\frac{x_{i+1}}{t-t_{+}(x_{i+1},t)} \bigg|^{1/s}\\
			&+&\big|u(x_{l},t)-u(x_{l+1},t)\big|^{1/s}\\
			&+&\sum_{i=l+1}^{\f}\big|u_{0}(y(x_{i},t))-u_{0}(y(x_{i+1},t)) \big|^{1/s}.
		\end{eqnarray*} 
		The estimate on first sum follow from,
		\begin{multline*}
			\sum_{i=0}^{l-1}\bigg|\frac{x_{i}}{t-t_{+}(x_{i},t)}-\frac{x_{i+1}}{t-t_{+}(x_{i+1},t)} \bigg|^{1/s}\\=\sum_{i=0}^{l-1}\left|\frac{x_{i}}{t-t_{+}(x_{i},t)}-\frac{x_{i}}{t-t_{+}(x_{i+1},t)}+\frac{x_{i}}{t-t_{+}(x_{i+1},t)}-\frac{x_{i+1}}{t-t_{+}(x_{i+1},t)} \right|^{1/s},\\
		\end{multline*}
		from triangle inequality we get,
		
		\begin{multline*}
			\sum_{i=0}^{l-1}\bigg|\frac{x_{i}}{t-t_{+}(x_{i},t)}-\frac{x_{i+1}}{t-t_{+}(x_{i+1},t)} \bigg|^{1/s}\\\le\sum_{i=0}^{l-1}\left(\left|\frac{x_{i}}{t-t_{+}(x_{i},t)}-\frac{x_{i}}{t-t_{+}(x_{i+1},t)}\right|+\left|\frac{x_{i}}{t-t_{+}(x_{i+1},t)}-\frac{x_{i+1}}{t-t_{+}(x_{i+1},t)} \right|\right)^{1/s} ,
		\end{multline*}
		now from the inequality $a^{1/s}+b^{1/s}\le(a+b)^{1/s}$ we get,
		\begin{multline*}
			\sum_{i=0}^{l-1}\bigg|\frac{x_{i}}{t-t_{+}(x_{i},t)}-\frac{x_{i+1}}{t-t_{+}(x_{i+1},t)} \bigg|^{1/s}\\\le\left(\sum_{i=0}^{l-1}\left|\frac{x_{i}}{t-t_{+}(x_{i},t)}-\frac{x_{i}}{t-t_{+}(x_{i+1},t)}\right|+\left|\frac{x_{i}}{t-t_{+}(x_{i+1},t)}-\frac{x_{i+1}}{t-t_{+}(x_{i+1},t)} \right|\right)^{1/s}.
		\end{multline*}
		Therefore, we get the following estimate,
		\begin{multline*}
			\sum_{i=0}^{l-1}\bigg|\frac{x_{i}}{t-t_{+}(x_{i},t)}-\frac{x_{i+1}}{t-t_{+}(x_{i+1},t)} \bigg|^{1/s}\le\left(\frac{R_{1}(t) -\e}{|t-t_{+}(\e,t)|}+\frac{R_{1}(t)|t_{+}(\e,t)-t_{+}(R_{1}(t),t)|}{|t-t_{+}(\e,t)|^{2}}\right)^{1/s}.
		\end{multline*}
		Thus we have,
		\begin{eqnarray*}
			\sum_{i=0}^{\f}|u(x_{i},t)-u(x_{i+1},t)|^{1/s}&\le&C\sup_{0\le t\le T}\bigg(\frac{R_{1}(t) -\e}{|t-t_{+}(\e,t)|}+\frac{R_{1}(t)|t_{+}(\e,t)-t_{+}(R_{1}(t),t)|}{|t-t_{+}(\e,t)|^{2}} \bigg)^{1/s}\\
			&+&  TV^{s}(u_{0})+(2||u_{0}||)^{1/s},\\
			&\le& C(\e, t)+TV^{s}(u_{0})+(2||u_{0}||)^{1/s}.
		\end{eqnarray*}
		In a similar way the other case $x\le-\e$ can be handle, 
		\begin{equation*}
			\sum_{i=0}^{\f}|u(x_{i},t)-u(x_{i+1},t)|^{1/s}\le C(\e, t)+TV^{s}(u_{0})+2(2||u_{0}||)^{1/s}.
		\end{equation*}
	\end{proof}
	\section{Construction of counter-example}\label{sec:opt}
	Next we proceed to construct a counter-example so that initial data is in $BV$ but the  corresponding  solution is not in $BV^s$  at a fixed positive time $T>0$ and for some specific choice of flux. We refer to the backward construction for conservation laws with discontinuous flux introduced in \cite{AG}. In order to apply the method of backward construction we need to recall some notations and results from \cite{AG}. Next result is borrowed from \cite{AG} which says that given $h_+,z$ functions we can construct an entropy solution satisfying Hopf-Lax type formula for \eqref{1.1} with $h_+,z$.
	
	\begin{proposition}[Backward construction, \cite{AG}]\label{Prop:BC}
		Let $f,g$ are $C^1$ strictly convex functions. Let $R>0$ and $z:[0,R]\rr(-\f,0]$ be a non-decreasing function with $z_0=z(0+)$ and $z_1=z(R-)$. Suppose 
		\begin{align}
			h_+\left(\frac{R}{T-t_1}\right)&=-\frac{z_1}{t_1},\nonumber\\
			g^{\p}(u_-)&=\frac{z_0}{T},\,g^{\p}(v_-)=-\frac{z_1}{t_1},\,\bar{v}_-=f_+^{-1}(g(v_-)),
		\end{align}
		where $h_+$ is defined as
		\begin{equation}
			h_+:=g^{\p}\circ g_+^{-1}\circ f\circ (f^{\p})^{-1}.
		\end{equation}
		We additionally assume that $h_+$ is a locally Lipschitz function. Then there exists an initial data $u_0\in L^\f(\R)$ and the corresponding entropy solution $u$ to \eqref{1.1} such that
		\begin{equation}\label{Prop:cond}
			u(x,T)=(f^{\p})^{-1}\left(\frac{x}{T-t_+(x)}\right)\mbox{ where }-\frac{z(x)}{t_+(x)}=h_+\left(\frac{x}{T-t_+(x)}\right)\mbox{ for }x\in [0,R]
		\end{equation}
		and additionally, it holds $u(x,T)=u_-$ for $x<0$ and $u(x,T)=\bar{v}_-$ for $x>R$.
	\end{proposition}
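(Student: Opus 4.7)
The plan is a direct backward construction. Given the non-decreasing function $z:[0,R]\to(-\f,0]$ and the compatibility data $u_-,v_-,\bar v_-$, the idea is to (i) solve the implicit equation defining $t_+(x)$ on $[0,R]$, (ii) prescribe the solution on the strip $\{(x,T):0\le x\le R\}$ and trace back straight characteristics to the interface at $(0,t_+(x))$, (iii) apply the Rankine--Hugoniot relation \eqref{RH} to continue each such characteristic across the interface and down to $t=0$, where it defines $u_0$ at the point $z(x)$, and (iv) extend $u_0$ outside the image $z([0,R])\subset[z_0,z_1]$ so that the outer regions $\{x<0,t=T\}$ and $\{x>R,t=T\}$ are filled by characteristics carrying the prescribed constants $u_-$ and $\bar v_-$ respectively.

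\noindent\textbf{Main steps.} I would first establish the well-posedness of $t_+$. Since $f,g$ are strictly convex, each factor of $h_+=g'\circ g_+^{-1}\circ f\circ(f')^{-1}$ is monotone on the relevant branch, so $h_+$ is continuous and non-decreasing on its domain. For fixed $x\in(0,R]$ the map $t_+\mapsto -z(x)/t_+$ is strictly decreasing on $(0,T]$ while $t_+\mapsto h_+(x/(T-t_+))$ is non-decreasing, and the endpoint condition $h_+(R/(T-t_1))=-z_1/t_1$ together with the local Lipschitz assumption on $h_+$ produces a unique solution $t_+(x)\in(0,T]$; the monotonicity of $z$ then makes $t_+(\cdot)$ non-increasing in $x$. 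Setting $u(x,T):=(f')^{-1}(x/(T-t_+(x)))$ on $[0,R]$ and propagating backward along the segment from $(x,T)$ to $(0,t_+(x))$, the right trace at the interface is $u(0+,t_+(x))=(f')^{-1}(x/(T-t_+(x)))$. Rankine--Hugoniot fixes the left trace as $u(0-,t_+(x))=g_+^{-1}(f((f')^{-1}(x/(T-t_+(x)))))$, and its propagation speed equals $h_+(x/(T-t_+(x)))=-z(x)/t_+(x)$ by the defining equation of $t_+$; the straight characteristic issued from $(0,t_+(x))$ with this speed lands exactly at $(z(x),0)$, defining $u_0(z(x))$. Outside the image $z([0,R])$, I set $u_0\equiv u_-$ on $(-\f,z_0)$, use the compatibility $g'(u_-)=z_0/T$ to check that these characteristics carry $u_-$ into the region $\{x<0,t=T\}$, and choose $u_0$ on the positive half-line so that the constant $v_-$ (with $g'(v_-)=-z_1/t_1$) is transmitted via the relation $\bar v_-=f_+^{-1}(g(v_-))$ to cover $\{x>R,t=T\}$ with $\bar v_-$.

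\noindent\textbf{Main obstacle.} The technical heart of the argument is to verify that the field so manufactured is actually the entropy solution for the initial datum $u_0$. Uniqueness from \cite{Kyoto} reduces this to checking the weak formulation along with Kru\v zkov's entropy inequalities on each side of the interface and the interface entropy condition \eqref{iec}. The non-crossing of characteristics on each half-plane follows from the monotonicity of $t_+$ and of $z$, so the classical theory for conservation laws with strictly convex flux supplies Kru\v zkov's inequalities on each cone foliated by straight characteristics. Condition \eqref{iec} is automatic: on the interface-times $\{t_+(x):x\in[0,R]\}$ the prescribed right trace, being $(f')^{-1}$ of a non-negative number, lies in the increasing branch of $f$, and the matched left trace lies in $g_+^{-1}$'s range and hence in the increasing branch of $g$, so $f'(u^+)\ge0$ and $g'(u^-)\ge0$ simultaneously and the forbidden set is empty. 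The identification of $u$ with the Lax--Oleinik formula of Theorem~\ref{constant} is then obtained by taking $t_\pm,z_\pm$ to coincide with the monotone $t_+$ and $z$ above, which closes the construction.
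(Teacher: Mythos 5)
Your proposal follows a genuinely different route from the paper --- a direct characteristic-by-characteristic backward construction rather than the paper's approximation scheme --- but it has a concrete gap. The paper (following \cite{AG}) first replaces $z$ by a piecewise-constant non-decreasing approximation $w_0<\dots<w_k$, builds for each $N$ an explicit piecewise-constant datum $u_0^N$ together with the full wave pattern of the solution $u^N$ (rarefaction fans, constant states $c_i,d_i$, and shock curves $r_i,\tilde r_i,a_i,b_i$ with Rankine--Hugoniot speeds $s_i,S_i$), proves a total-variation bound on $g'(u_0^N)$ using the local Lipschitz continuity of $h_+$, and passes to the limit by Helly's theorem. Your construction defines $u_0$ only at the feet $z(x)$ of the traced characteristics plus the constant $u_-$ to the left of $z_0$. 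Since $z$ is merely non-decreasing it may jump, and on each gap $(z(x_*-),z(x_*+))$ your $u_0$ is simply undefined; it cannot be filled in arbitrarily, because the characteristics emanating from that interval must be absorbed by an admissible shock before time $T$ (a shock which in general crosses the interface and whose trajectory is governed by two different Rankine--Hugoniot conditions on the two sides of $x=0$). Constructing and controlling exactly these shocks is the content of the paper's explicit formulas for $a_i,b_i,s_i,S_i$, and it is the step your sketch omits. Relatedly, your claim that entropy admissibility follows from ``classical theory on cones foliated by straight characteristics'' only covers the rarefaction regions; it says nothing about the shock regions, nor about the interface traces at times $t$ outside the set $\{t_+(x):x\in[0,R]\}$, where the wave pattern you have not constructed determines $u(0\pm,t)$.

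A second symptom of the missing step is how the hypothesis that $h_+$ is locally Lipschitz is used. In your argument it appears only to solve the implicit equation for $t_+(x)$, for which continuity and monotonicity of $h_+$ would already suffice. In the paper's proof it is the ingredient that yields the uniform $TV$ bound on $g'(u_0^N)$, which is what makes Helly's theorem applicable and allows passage from piecewise-constant $z$ to a general non-decreasing $z$. Without some substitute for this compactness (or a direct, rigorous verification that the limit field with its shocks is the entropy solution for the limit datum), the proposal does not close. The monotonicity arguments you do give --- that $t_+(\cdot)$ is non-increasing, that the right trace satisfies $f'(u^+)>0$ and the matched left trace satisfies $g'(u^-)\ge 0$ so that \eqref{iec} holds on the covered times --- are correct and consistent with the paper, but they are the easy part.
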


	\begin{center}
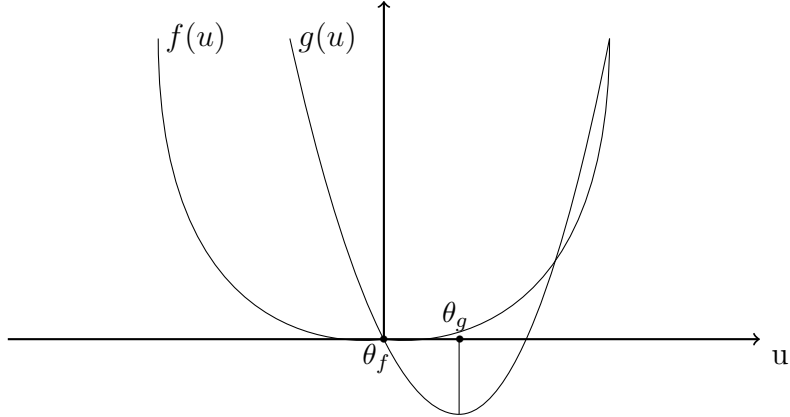
\label{figure-3}
		\begin{tikzpicture}[scale=1]
			\draw [thick, ->] (-5,0)--(5,0) node[anchor=north west] {u};
			\draw [thick, ->](0,0)--(0,4.5);
			\draw (0,0).. controls(0,0) and (3,-.5)..(3,4);
			\draw (0,0).. controls(0,0) and (-3,-.5)..(-3,4);
			\draw (1,-1) parabola (3,4);
			\draw (1,-1) parabola (-1.25,4);
			\draw(0,-.005) node{\tiny$\bullet$};
			\draw(1.01,0) node{\tiny$\bullet$};
			\draw(-.1,-.26) node{\small$\theta_{f}$};
			\draw (1,-1)--(1,0);
			\draw(.95,.3) node{\small$\theta_{g}$};
			\draw (-2.5,4) node{$f(u)$};
			\draw (-0.75,4) node{ $g(u)$};
		\end{tikzpicture}
		\captionof{figure}{An illustration of fluxes}
	\end{center}
	To be self-contained the main ingredients of the proof is given in \ref{ap:opt}.\\
	Now the proof of  Theorem \ref{example} can be done.
	\begin{proof}[Proof of Theorem \ref{example}]
		Let $f(u)=\abs{u}^{p+1}$ and $g(u)=u^2-1$. Note that by Lemma \ref{new} $f$ satisfies the non-degeneracy condition  \eqref{fluxc} with exponent $p$ and $g$ is uniformly convex. 
		
		Let $\{a_k\}_{k\geq1}$ be a sequence defined as $a_{2i}=i^{-\B}$ and $a_{2i+1}=i^{-\al}$ with $\B>\al>0$ which will be chosen later. Consider an increasing sequence $\{t_k\}$ such that $t_k\rr 1$ and 
		\begin{equation}
			1-t_{2k+1}=\frac{1}{k^{\B-\al}}(1-t_{2k})\mbox{ and }t_{2k+2}-t_{2k+1}=k^{-\la}
		\end{equation}
		where $\la>1$ will be chosen later. Then we have
		\begin{equation}
			\frac{t_{2k+2}-t_{2k+1}}{t_{2k+1}}=\frac{1}{k^{\la}}\frac{1}{t_{2k+1}}\geq \frac{1}{k^{\la}}.
		\end{equation}
		We define $\{x_i\}$ as follows
		\begin{equation}\label{def:x_i}
			x_i=(1-t_{2i})a_{2i}=(1-t_{2i+1})a_{2i+1}.
		\end{equation}
		Since $\{t_{2i}\}_{i\geq1}$ is increasing and $\{a_{2i}\}_{i\geq1}$ is decreasing sequence, $\{x_i\}_{i\geq1}$ is a decreasing sequence. Let $h:[0,\f)\rr\R$ be defined as $$h(u)=2\sqrt{1+(p+1)^{-1-\frac{1}{p}}u^{1+\frac{1}{p}}}$$ for $u\geq 0$. Observe that
		\begin{equation}
			\frac{h(a_{2i+1})}{h(a_{2i+2})}-1=\frac{\sqrt{1+\left(\frac{i^{-\al}}{p+1}\right)^{1+\frac{1}{p}}}-\sqrt{1+\left(\frac{(i+1)^{-\B}}{p+1}\right)^{1+\frac{1}{p}}}}{\sqrt{1+\left(\frac{(i+1)^{-\B}}{p+1}\right)^{1+\frac{1}{p}}}}\leq \frac{1}{i^{\frac{p+1}{p}\al}}-\frac{1}{i^{\frac{p+1}{p}\B}}.
		\end{equation}
		Then if $\la<\frac{p+1}{p}\al$ we get
		\begin{equation}
			\frac{h(a_{2i+1})}{h(a_{2i+2})}-1<\frac{t_{2i+2}}{t_{2i+1}}-1.
		\end{equation}
		Therefore, we have 
		\begin{equation}\label{ineq:ht-1}
			t_{2i+1}h(a_{2i+1})<t_{2i+2}h(a_{2i+2}).
		\end{equation} 
		Note that
		\begin{align}
			\frac{1-t_{2i+1}}{1-t_{2i}}=\frac{1}{i^{\B-\al}}<1.
		\end{align}
		Hence,  $t_{2i+1}>t_{2i}$. Since $h(a_{2i+1})>h(a_{2i})$ we have $t_{2i+1}h(a_{2i+1})>h(a_{2i})t_{2i}$. Let $\xi(x)$ be solving the following problem
		\begin{align}
			\left(\frac{x}{1-\xi(x)}\right)^{1+\frac{1}{p}}&=\left(\frac{C}{\xi(x)+d}\right)^2-1\label{def:xi}\\
			\xi(x_{i})&=t_{2i+1},\label{condition:xi}\\
			\xi(x_{i+1})&=t_{2i+2}.\label{condition:xi1}
		\end{align} 
		Note that $C,d>0$ is determined by \eqref{condition:xi} and \eqref{condition:xi1}. Next we  show that $\xi^{\p}<0$. To this end we differentiate both side of \eqref{def:xi} and get the following
		\begin{equation}
			\begin{array}{rl}
				0<\left(1+\frac{1}{p}\right)x^{\frac{1}{p}}&=-\xi^{\p}(x)\left(1+\frac{1}{p}\right)(1-\xi(x))^{\frac{1}{p}}\left[\left(\frac{C}{\xi(x)+d}\right)^2-1\right]\\
				&-\xi^{\p}(x)(1-\xi(x))^{1+\frac{1}{p}}\frac{2C}{(\xi(x)+d)^2}\left(\frac{C}{\xi(x)+d}\right).
			\end{array}
		\end{equation}
		Therefore, we get $\xi^{\p}(x)<0$. Let $\Phi(x)$ be defined as 
		\begin{equation}
			\Phi(x):=\xi(x)\sqrt{1+\left(\frac{x}{1-\xi(x)}\right)^{1+\frac{1}{p}}}=\frac{C\xi(x)}{\xi(x)+d}.
		\end{equation}
		Observe that
		\begin{equation}\label{ineq-ht-2}
			\Phi^{\p}(x)=\xi^{\p}(x)\left[\frac{C}{\xi(x)+d}-\frac{C\xi(x)}{(\xi(x)+d)^2}\right]=\xi^{\p}(x)\frac{Cd}{(\xi(x)+d)^2}<0.
		\end{equation}
		Finally we define the function $t(x)$ such that $t(x_{i}+)=t_{2i}$ and $t(x_{i}-)=t_{2i+1}$ for $i\geq i_0$ and $t$ satisfies \eqref{def:xi}--\eqref{condition:xi1} for $x\in(x_{i+1},x_{i})$.  Let $\rho:(0,\f)\rr\R$ be defined as 
		\begin{equation}
			\rho(x)=-t(x)h\left(\frac{x}{1-t(x)}\right).
		\end{equation}
		By \eqref{ineq:ht-1} and \eqref{ineq-ht-2}, $x\mapsto \rho(x)$ is increasing. By Proposition \ref{Prop:BC} with $R=x_1$, there exists an entropy solution $u$ such that
		\begin{equation}
			u(x_{i}+,1)=\left(\frac{x_{i}}{(p+1)(1-t_{2i})}\right)^{\frac{1}{p}}\mbox{ and }u(x_{i}-,1)=\left(\frac{x_{i}}{(p+1)(1-t_{2i+1})}\right)^{\frac{1}{p}}.
		\end{equation}
		By \eqref{def:x_i} we get
		\begin{equation}
			u(x_{i}+,1)=\left(\frac{a_{2i}}{p+1}\right)^{\frac{1}{p}}\mbox{ and }u(x_{i}-,1)=\left(\frac{a_{2i+1}}{p+1}\right)^{\frac{1}{p}}.
		\end{equation}
		Therefore,
		\begin{align}
			\abs{u(x_{i}-,1)-u(x_i+,1)}&=\abs{\left(\frac{a_{2i}}{p+1}\right)^{\frac{1}{p}}-\left(\frac{a_{2i+1}}{p+1}\right)^{\frac{1}{p}}}\nonumber\\
			&=(1+p)^{-\frac{1}{p}}\left[i^{-\frac{\al}{p}}-i^{-\frac{\B}{p}}\right].
		\end{align}
		Let $\e>0$. Then, we have
		\begin{equation}
			\abs{u(x_{i}-,1)-u(x_i+,1)}^{\frac{p+1}{1+\e}}\geq C(p)\left[i^{-\frac{\al(p+1)}{p(1+\e)}}-i^{-\frac{\B(p+1)}{p(1+\e)}}\right].
		\end{equation}
		Now, we set
		\begin{equation}
			\la=1+\frac{2p}{3(2p+1)}\e,\,\frac{p+1}{p}\al=1+\frac{4p+2}{3(2p+1)}\e\mbox{ and }\frac{p+1}{p}\B=1+\frac{2(3p+2)}{3(2p+1)}\e.
		\end{equation}
		We check that $\beta-\al=\la-1$ and $\frac{p+1}{p}\B>1+\e$.	Hence, $u(\cdot,1)\notin BV^{s}_{loc}(\R)$ for $s=\frac{1}{p+1}+\frac{\e}{p+1}$. Note that by Proposition \ref{Prop:BC} initial data $u_0\in L^\f(\R)$. Now we find a data which is in $BV(\R)$. From the construction we have $x_1<R_2(1)$ where $R_2(t)$ is as in Theorem \ref{constant}. Choose a point $x_0\in (x_1,R_2(1))$. Note that $0<t_+(x_0,1)<1$ and $u(x,t_+(x_0,1))=\bar{v}_-$ for $x\geq0$. We also observe that $L_1(t)=0$ and $R_2(t)>0$ for $t=t_+(x_0,1)$. Therefore, for $t=t_+(x_0)$ we have
		\begin{equation}
			u(x,t)=(g^\p)^{-1}\left(\frac{x-z_-(x,t)}{t}\right)\mbox{ for }x<0.
		\end{equation}
		Since $g$ is uniformly convex we have $u(\cdot,t_+(x_0,1))\in BV((-\f,0))$. To conclude the Theorem \ref{example} we set $v_0(x):=u(x,t_0(x_0,1))$. Let $v(x,t)$ be the entropy solution to \eqref{1.1} with initial data $v_0$. Note that $v(x,1-t_0(x_0,1))=u(x,1)$ for all $x\in\R$. Hence, the proof of Theorem \ref{example} is completed.
	\end{proof}

	\appendix
	\section{H\"older continuity of singular maps}\label{appendix-a}
	
	In this section useful lemma on H\"older exponent and non degeneracy of fluxes are collected and used throughout the paper. Some commentaries are added for all lemmas. 
	
The following lemma recall that the non uniform convexity of  a flux  function corresponds to a loss of the Lipschitz regularity for the reciprocal function of the derivative.  This key point inforces a $BV^s$ (or generalized $BV$ regularity \cite{CJLO,GJC}) instead of $BV$ regularity \cite{lax1,ol} for the entropy solutions.
	\begin{customlemma}{A.1}\label{lemma:Holder}
		Let $g\in C^1(\R)$ be satisfying the non-degeneracy \eqref{fluxc} with exponent $q$. Then $(g^{\p})^{-1}$ is H\"older continuous with exponent $1/q$.
	\end{customlemma}
	\begin{proof}
		Fix a compact set $K$. Let $x$ and $y$ is in $g'(K)$. There exist $\tilde{x},\tilde{y}$ such that $\tilde{x}=(g')^{-1}(x)$ and $\tilde{y}=(g')^{-1}(y)$. Then, 
		\begin{equation*}
			\frac{|(g')^{-1}(x)-(g')^{-1}(y)|}{|x-y|^{1/q}}=\frac{|\tilde{x}-\tilde{y}|}{|g'(\tilde{x})-g'(\tilde{y})|^{1/q}}=\frac{|\tilde{x}-\tilde{y}|}{|g'(\tilde{x})-g'(\tilde{y})|^{1/q}}\le\frac{1}{C_{2}^{1/q}}.
		\end{equation*} 
		This proves the Lemma \ref{lemma:Holder}.
	\end{proof}
	
		The interface condition \eqref{RH} needs the use of some reciprocal functions of the flux $g$ or $f$. The fact that the reciprocal function of $g$  is never Lipschitz near $\min g$ forbids the classical  Lax-Oleinik $BV$ smoothing effect for an uniform convex flux.  
	\begin{customlemma}{A.2}\label{lemma:g-condition}
		Let $g$ be a $C^2$ function satisfying \eqref{fluxc} with exponent $q$ then $g_+$ satisfies \eqref{fluxc} with exponent $q+1$ on domain $(\theta_g,\f)$.
	\end{customlemma}
	\begin{proof}
		Since $\theta_{g}$ is the critical point of $g$ hence, $g'(\theta_{g})=0$, then we consider 
		\begin{align*}
			g(x)-g(y)&=(x-y)\int_{0}^{1}g'(\lambda x+(1-\lambda)y)d\lambda,\\
			&=(x-y)\int_{0}^{1}(g'(\lambda x+(1-\lambda)y)-g'(\theta_{g}))d\lambda.
		\end{align*}
		We know that $g'(\cdot)$ is a increasing function and $g$ satisfies  the non-degeneracy condition \eqref{fluxc}. Let $x>y\ge \theta_{g}$, then
		\begin{align}\nonumber
			|g(x)-g(y)|&=|x-y|\int_{0}^{1}(g'(\lambda x+(1-\lambda)y)-g'(\theta_{g}))d\lambda\\\nonumber
			&\ge C_{2}|x-y|\int_{0}^{1}(\lambda x+(1-\lambda)y)-\theta_{g})^{q}d\lambda\\\nonumber
			&\ge \frac{1}{q+1}C_{2}\big((x+(1-\lambda)y)-\theta_{g})^{q+1}\big)\Bigg|_{0}^{1} \\\nonumber
			&\ge\frac{1}{q+1}C_{2}((x-\theta_{g})^{q+1}-(y-\theta_{g})^{q+1})\\
			&\ge\frac{C_{2}}{q+1}|x-y|^{q+1}.\label{q+1}
		\end{align} 
	\end{proof}
	
		The previous commentary of Lemma \ref{lemma:g-condition}   is even more important for the non Lipschitz regularity of the singular map.
	\begin{customlemma}{A.3}\label{lipschitz}
		Suppose fluxes $f$ and $g$ are $C^{1}(\R)$ and convex functions with $f(\theta_{f})<g(\theta_{g})$ which additionally satisfies the  non-degeneracy condition \eqref{fluxc} and let $K$ is any compact set of $\R$. Then for $x\in K$, $f^{-1}_{+}g(\cdot)$ is a Lipschitz continuous function and $g^{-1}_{-}f(\cdot)$ is a H\"older continuous function.
	\end{customlemma}
	\begin{proof}
		Since $f(\theta_{f})<g(\theta_{g})$, there exist $a_1<\theta_f<a_2$ such that $f(a_1)=g(\theta_g)=f(a_2)$. Hence, we have
		\begin{equation}
			\bar{c}:=\min\left\{\abs{f^{\p}(a)};\,a\in(-\f,a_1]\cup[a_2,\f)\right\}>0.
		\end{equation} Without loss generality we can assume that $g(x)\not=g(y)$ because if $g(x)=g(y)$ then result holds anyway. There exist $\tilde{x},\tilde{y}>\theta_f$ such that $f(\tilde{x})=g(x)$ and $f(\tilde{y})=g(y)$. As $f^{-1}_+$ is increasing, we get $\tilde{x},\tilde{y}>a_2$. Consider the following 
		\begin{eqnarray*}
			\frac{|f^{-1}_{+}g(x)-f^{-1}_{+}g(y)|}{|x-y|}&=&\frac{|f^{-1}_{+}g(x)-f^{-1}_{+}g(y)|}{|g(x)-g(y)|}\cdot\frac{|g(x)-g(y)|}{|x-y|},\\
			&=&\frac{|f^{-1}_{+}f(\tilde{x})-f^{-1}_{+}f(\tilde{y})|}{|f(\tilde{x})-f(\tilde{y})|}\cdot\frac{|g(x)-g(y)|}{|x-y|},\\
			&=&\frac{|\tilde{x}-\tilde{y}|}{|f(\tilde{x})-f(\tilde{y})|}\cdot\frac{|g(x)-g(y)|}{|x-y|},\\
			&=&\frac{1}{f'(c_{0})}\cdot\frac{|g(x)-g(y)|}{|x-y|},
		\end{eqnarray*} 
		for some $c_0$ in between $\tilde{x},\tilde{y}$. Note that ${c}_0\geq a_2$ and $f^{\p}\geq \bar{c}$. As $g$ is Lipschitz continuous function, we have $\abs{g(x)-g(y)}\leq c_1\abs{x-y}$ where $c_1$ depends on $g$ and $K$. 
		Therefore we get,
		\begin{equation}
			\frac{|f^{-1}_{+}g(x)-f^{-1}_{+}g(y)|}{|x-y|}\le C.
		\end{equation}
		We know that for $f(x)\ge g(\theta_{g})$ there exists $\tilde{x}$ such that $f(x)=g(\tilde{x})$ and $g'(\tilde{x})>0$, without loss of generality we can assume that $g(x)\not=g(y)$ because if $g(x)=g(y)$ then result holds.
		\begin{eqnarray*}
			\frac{|g^{-1}_{-}f(x)-g^{-1}_{-}f(y)|^{q+1}}{|x-y|}&=&\frac{|g^{-1}_{-}f(x)-g^{-1}_{-}f(y)|^{q+1}}{|f(x)-f(y)|}\cdot\frac{|f(x)-f(y)|}{|x-y|},\\
			&=&\frac{|g^{-1}_{-}g(\tilde{x})-g^{-1}_{-}g(\tilde{y})|^{q+1}}{|g(\tilde{x})-g(\tilde{y})|}\cdot\frac{|f(x)-f(y)|}{|x-y|}
			,\\
			&=&\frac{|\tilde{x}-\tilde{y}|^{q+1}}{|g(\tilde{x})-g(\tilde{y})|}\cdot\frac{|f(x)-f(y)|}{|x-y|},
		\end{eqnarray*}
		Now from the Lipschitz continuity $f$ and \eqref{q+1},
		\begin{equation}
			\frac{|g^{-1}_{-}f(x)-g^{-1}_{-}f(y)|^{q+1}}{|x-y|}\le C.
		\end{equation}
		Hence, it implies that
		\begin{equation*}
			|g^{-1}_{-}f(x)-g^{-1}_{-}f(y)|\le C|x-y|^{1/q+1}.
		\end{equation*}
	\end{proof}
Next lemma shows that power law fluxes satisfies the non-degeneracy condition \eqref{fluxc}. 
	\begin{customlemma}{A.4}\label{new}
	Let $M>0$ and $g:[-M,M]\rr\R$ be defined as $g(u)=\abs{u}^p$ for $p\geq2$. Then $g$ satisfies the non-degeneracy condition \eqref{fluxc} with exponent $p-1$.
\end{customlemma}
This is the simplest example with power-law degeneracy $p-1$ \cite{junca1,CJ1}.
\begin{proof}We calculate $g'(u)=\sign(u)p|u|^{p-1}$. Then we show  that the non-degeneracy condition \eqref{fluxc}  is satisfied for $g$ case by case. Hence, we consider \\
	Case(\RN{1}): If $u,v\ge0$, since  $(|u|+|v|)^{p}>(|u|^{p}+|v|^{p})$, we get
	\begin{equation}
		\frac{|g'(u)-g'(v)|}{|u-v|^{p-1}}=p\frac{|u^{p-1}-v^{p-1}|}{|u-v|^{p-1}}\ge p.
	\end{equation}
	Now for $u,v\le 0$ can be handled similarly as Case \RN{1}.\\
	Case(\RN{2}): If $u\le0$ and $v\ge0$, then we  also get
	\begin{equation}
		\frac{|g'(u)-g'(v)|}{|u-v|^{p-1}}=p\frac{||u|^{p-1}+|v|^{p-1}|}{|u-v|^{p-1}}\ge p.
	\end{equation}
	Again for $u\le0$ and $v\ge0$ can be handled in similar way as Case \RN{2}. 
\end{proof}
	\section{$BV^{s}$ embedding}
The continuous embedding between fractional $BV$ spaces is explicited using the $L^\infty$ norm  or, more precisely,  the oscillation in the next lemma. 
Recall that   the oscillation of the function $u$ on $I$ is,
 $$ osc(u):=\sup_{x<y}\{|u(x) -u(y)|\} \leq  2 \|u \|_\infty .$$ 
\begin{customlemma}{B.1}\label{TVs_norm}
			Let $u:I\subset\R\rr\R$ be bounded function on a given interval $I$ and $0<s<t$ such that $u\in BV^{t}\subset BV^{s}$. Let $p=\frac{1}{s} \geq q=\frac{1}{t}$, then,
			\begin{equation}
				TV^{s}u(I)  \; \le  \; osc(u)^{p-q} \; TV^{t}u(I) .
			\end{equation} 
		\end{customlemma}
		\begin{proof}
 When $osc(u) \leq 1$,  the inequality  $ y^p \leq y^q$ for all $y \in [0,1]$ gives a direct estimate.  More precisely,
		let $\sigma=(x_{1},\cdots,x_{n})$ be any partition of $I$,
			\begin{align*}
				\sum\limits_{i=1}^{n-1}|u(x_{i})-u(x_{i+1})|^{p}\le \sum\limits_{i=1}^{n-1}|u(x_{i})-u(x_{i+1})|^{q}
		\le  TV^{t}u(I).
			\end{align*}
This inequality can be improved as follows if $u$ is non constant, that is $osc(u) > 0$. 
For this purpose, consider   $  v=u /osc(u)$  so $ osc(v) \leq 1 $.  Now, on a subdivision, we have, 
			\begin{align*}
	osc(u)^{-p}	\sum\limits_{i=1}^{n-1}|u(x_{i})-u(x_{i+1})|^{p}
		=
		\sum\limits_{i=1}^{n-1}|v(x_{i})-v(x_{i+1})|^{p}  
		\\  \le \sum\limits_{i=1}^{n-1}|v(x_{i})-v(x_{i+1})|^{q}
		=  osc(u)^{-q}	\sum\limits_{i=1}^{n-1}|u(x_{i})-u(x_{i+1})|^{q}.
			\end{align*}
			That is to say,  the following inequality which is also valid when $osc(u)=0$,
			\begin{align*}
\sum\limits_{i=1}^{n-1}|u(x_{i})-u(x_{i+1})|^{p}	
	 \le
		  osc(u)^{p-q}	\sum\limits_{i=1}^{n-1}|u(x_{i})-u(x_{i+1})|^{q}. 
			\end{align*}
			This is enough to conclude the lemma. 
	\end{proof}
	\section{Backward construction} \label{ap:opt}
	The proof of the optimality presented in section \ref{sec:opt} needs a construction of an initial data and solution by borrowing ideas and techniques from  control. We  only give a sketch of the  existence of such solution along with initial data that stated in Proposition \ref{Prop:BC}. The complete construction can be found in \cite{AG}.

	\begin{proof}[Proof of Theorem \ref{Prop:BC}]
		We first approximate $z(x)$ by piece-wise constant increasing function as follows
		\begin{equation}
			\left\{\begin{array}{r} 
				z_0 = w_0 < w_1 < \cdots < w_k = z_1,\\
				\abs{w_{i+1}-w_{i}} <\frac{1}{N},\\
				0 = x_0 < x_1 <\cdots< x_k = R,\\
				z(x_i) = w_i\mbox{ for }1\leq i\leq k-1,\\
				\mbox{ with } z_0 = z(0) \mbox{ and } z_1 = z(R-). 
			\end{array}\right.
		\end{equation}
		We set $t_0=T$ and $t_i,\,1\leq i\leq 2k,$, $c_i,d_i\,1\leq i\leq k$ as follows
		\begin{equation}
			\begin{array}{rl}
				h_+\left(\frac{x_i}{T-t_{2i-1}}\right)=-\frac{w_{i-1}}{t_{2i-1}},\,h_+\left(\frac{x_i}{T-t_{2i}}\right)=-\frac{w_{i}}{t_{2i}},\\
				f^{\p}(c_{2i-1})=\frac{x_{i}}{T-t_{2i-1}},\,	f^{\p}(c_{2i})=\frac{x_{i}}{T-t_{2i}}\mbox{ and }d_i=g_+^{-1}(f(a_i)).
			\end{array}
		\end{equation}
		Then we observe that $c_{2i-1}>c_{2i},d_{2i-1}>d_{2i},T=t_0>t_1>\cdots>t_{2k}=T_1$. Consider Lipschitz curves $r_i,\tilde{r}_i,a_i,b_i$ defined as follows
		\begin{align}
			&\begin{array}{rll}
				s_i=\frac{f(c_{2i-1})-f(c_{2i})}{c_{2i-1}-c_{2i}},&S_i=\frac{g(d_{2i-1})-g(d_{2i})}{d_{2i-1}-d_{2i}},&1\leq i\leq k,\\
				r_i(t)=g^{\p}(d_i)(t-t_i),&\tilde{r}_i(t)=f^{\p}(c_i)(t-t_i),&1\leq i\leq 2k,\\
				a_i(t)=x_i+s_i(t-T),&b_i(t)=S_i(t-q_i),\,a_i(q_i)=0,&1\leq i\leq 2k,
			\end{array}\\
			&r_0(t)=g^{\p}(b_0)(t-T)=g^{\p}(u_-)(t-t_0).
		\end{align}
		Now, we define $u_{0}^N$ as below
		\begin{equation}
			u_0^N:=\left\{\begin{array}{ll}
				u_-&\mbox{ if }x<w_0,\\
				d_{2i-1}&\mbox{ if }w_{i-1}<x<b_i(0),1\leq i\leq k,\\
				d_{2i}&\mbox{ if }b_i(0)<x<w_{i},1\leq i\leq k,\\
				v_-&\mbox{ if }w_{2k}<x<0,\\
				\bar{v}_-&\mbox{ if }x>0.
			\end{array}\right.
		\end{equation}
		Let $\tilde{t}_{i}(x)$ be the unique solution to
		\begin{equation}
			h_+\left(\frac{x}{T-\tilde{t}_i(x,t)}\right)=-\frac{z_i}{\tilde{t}_i(x,t)}\mbox{ for }x\in(x_i,x_{i+1}),\,1\leq i\leq k-1.
		\end{equation} Corresponding entropy solution $u^N$ is the following
		\begin{equation}
			u^N(x,t)=\left\{\begin{array}{ll}
				u_-&\mbox{ if }x<r_0(t),\\
				(g^{\p})^{-1}\left(\frac{x-z_{i}}{t}\right)&\mbox{ if }r_{2i}(t)<x<\min\{r_{2i+1}(t),0\},\\
				(f^{\p})^{-1}\left(\frac{x}{t-\tilde{t}_i(x,t)}\right)&\mbox{ if }\max\{\tilde{r}_{2i+1}(t),0\}<x<\tilde{r}_{2i-1}(t),\\
				d_{2i-1}&\mbox{ if }r_{2i-1}(t)<x<\min\{S_i(t),0\},1\leq i\leq k,\\
				d_{2i}&\mbox{ if }S_{2i}(t)<x<\min\{r_{2i}(t),0\},1\leq i\leq k,\\
				c_{2i-1}&\mbox{ if }\max\{\tilde{r}_{2i-1}(t),0\}<x<s_i(t),1\leq i\leq k,\\
				c_{2i}&\mbox{ if }\max\{s_i(t),0\}<x<\tilde{r}_{2i},1\leq i\leq k,\\
				v_-&\mbox{ if }r_{2k}(t)<x<0,\\
				\bar{v}_-&\mbox{ if }x>\max\{\tilde{r}_{2k},0\}.
			\end{array}\right.
		\end{equation}
		By assumption we have $h_+$ is a locally Lipschitz continuous function and we can prove TV bound of $g^{\p}(u_0^N)$ (see \cite{AG} for more details). Then, by applying Helly's Theorem we can find a $u_0\in L^\f(\R)$ and corresponding entropy solution $u$ satisfying \eqref{Prop:cond}. This completes the proof of Proposition \ref{Prop:BC}.
	\end{proof}

	\noindent\textbf{Acknowledgement.} Authors thank IFCAM project ``Conservation laws: $BV^s$, interface and control".  SSG and AP thank the Department of Atomic Energy, Government of India, under project no. 12-R\&D-TFR-5.01-0520 for support. SSG acknowledges Inspire faculty-research grant DST/INSPIRE/04/2016/00-0237.

\end{document}